\newtheorem{thm}{Theorem}[section]
\newtheorem*{thma}{Theorem A}
\newtheorem*{thmb}{Theorem B}
\newtheorem*{thmc}{Theorem C}
\newtheorem{Thm}{Theorem}
\newtheorem{Con}{Conjecture}
\newtheorem{Cor}{Corollary}
\newtheorem{cor}[thm]{Corollary}
\newtheorem{lem}[thm]{Lemma}
\newtheorem{pro}[thm]{Proposition}
\theoremstyle{definition}
\newtheorem{defin}{Definition}
\numberwithin{equation}{section}
\begin{document}

\baselineskip=17pt

\title{Prime number races with three or more competitors}

\author[Youness Lamzouri]{Youness Lamzouri}

\address{Department of Mathematics, University of Illinois at Urbana-Champaign,
1409 W. Green Street,
Urbana, IL, 61821
USA}

\email{lamzouri@math.uiuc.edu}

\date{}

\begin{abstract} Fix an integer $r\geq 3$. Let $q$ be a large positive integer and $a_1,\dots,a_r$ be distinct residue classes modulo $q$  that are relatively prime to $q$. In this paper, we establish an asymptotic formula for the logarithmic density $\delta_{q;a_1,\dots,a_r}$ of the set of real numbers $x$ such that $\pi(x;q,a_1)>\pi(x;q,a_2)>\dots>\pi(x;q,a_r),$ as $q\to\infty$; conditionally on the assumption of the Generalized Riemann Hypothesis GRH and the Grand Simplicity Hypothesis GSH. Several applications concerning these prime number races are then deduced. Indeed, comparing with a recent work of D. Fiorilli and G. Martin \cite{FiM}  for the case $r=2$, we show that these densities behave differently when $r\geq 3$. Another consequence of our results is the fact that, unlike two-way races,  biases do appear in races involving three of more squares (or non-squares) to large moduli. Furthermore, we establish a conjecture of M. Rubinstein and P. Sarnak \cite{RS} (on biased races)  in certain cases where the $a_i$ are assumed to be fixed and $q$ is large. We also prove that a  conjecture of A. Feuerverger and G. Martin \cite{FeM} concerning ``bias factors'' (which follows from the work of Rubinstein and Sarnak \cite{RS} for $r=2$) does not hold when $r\geq 3$. Finally, we use a variant of our method to derive Fiorilli and Martin \cite{FiM} asymptotic formula for the densities in two-way races.

\end{abstract}

\subjclass[2010]{Primary 11N13; Secondary 11N69, 11M26}

\keywords{Chebyshev's bias, primes in arithmetic progressions, zeros of Dirichlet $L$-functions.}

\thanks{The author is supported by a postdoctoral fellowship from the Natural Sciences and Engineering Research Council of Canada.}

\maketitle
\tableofcontents

\section{Introduction}

\noindent In 1853 Chebyshev observed that primes congruent to $3$ modulo $4$ seem to predominate over those congruent to $1$ modulo $4$. In general, if $a$ is a non-square modulo $q$ and $b$ is a square modulo $q$ then $\pi(x;q,a)$ has a strong tendency to be larger than $\pi(x;q,b)$, where $\pi(x;q,a)$ denotes the number of primes less than $x$ that are congruent to $a$ modulo $q$. This general phenomenon is known as ``Chebyshev's bias''. This bias might appear unexpected in view of the prime number theorem for arithmetic progressions which states that  $\lim_{x\to\infty}\pi(x;q,a)/\pi(x;q,b)=1, \text{ as } x\to \infty$, for any $a$ and $b$ that are coprime to $q$.
 In fact, this asymptotic result does not give us any information on the difference $\pi(x;q,a)-\pi(x;q,b)$. In 1914, J.E. Littlewood \cite{Li} proved that the quantities $\pi(x;4,3)-\pi(x;4,1)$ and $\pi(x;3,2)-\pi(x;3,1)$ change sign infinitely often. Similar results to other moduli were subsequently derived by S. Knapowski and P. Tur\'an \cite{KT} (under some hypotheses on the zeros of Dirichlet $L$-functions), and further generalizations of this question were considered by J. Kaczorowski \cite{Ka1}, \cite{Ka2}.

Chebyshev's observation was the origin for a big branch of modern Number Theory, namely, comparative prime number theory. For a complete history of this subject, one can refer to the delightful articles of A. Granville and G. Martin \cite{GM}, and K. Ford and S. Konyagin \cite{FK2}.

A generalization of Chebyshev's question is the so called `` Shanks and R\'enyi prime number races problem'' which is described in the following way. Let $q\geq 3$ and $2\leq r\leq \phi(q)$ be positive integers. Define $\mathcal{A}_r(q)$ to be the set of ordered $r$-tuples of distinct residue classes $(a_1,a_2,\dots, a_r)$ modulo $q$ which are coprime to $q$. For $(a_1,a_2,\dots,a_r)\in \mathcal{A}_r(q)$, let $P_{q; a_1,\dots, a_r}$ be the set of real numbers $x\geq 2$ such that
$$ \pi(x;q,a_1)>\pi(x;q,a_2)>\dots>\pi(x;q,a_r).$$
Will the sets $P_{q; a_1,\dots, a_r}$ contain arbitrarily large  values, for every $r$-tuple $(a_1,a_2,\dots,a_r)\in \mathcal{A}_r(q)$?
In their fundamental work of 1994, M. Rubinstein and P. Sarnak \cite{RS} solved this question assuming the Generalized Riemann Hypothesis GRH and the Grand Simplicity Hypothesis GSH (which is the assumption that the imaginary parts of the zeros of all Dirichlet $L$-functions attached to primitive characters modulo $q$ are linearly independent over $\mathbb{Q}$). Indeed, they showed that for any $(a_1,\dots, a_r)\in \mathcal{A}_r(q)$ the logarithmic density of $P_{q;a_1,\dots, a_r}$ defined by
$$ \delta_{q; a_1,\dots,a_r}:= \lim_{x\to \infty}\frac{1}{\log x}\int_{t\in P_{q;a_1,\dots, a_r}\cap [2,x]}\frac{dt}{t},$$
exists and is $>0$. In fact this is corollary of a stronger result they proved, that there exists an absolutely continuous measure (with respect to the Lebesgue measure on $\mathbb{R}^r$) $\mu_{q;a_1,\dots,a_r}$ such that
\begin{equation}
 \delta_{q;a_1,\dots,a_r} =\int_{\substack{(x_1,\dots,x_r)\in \mathbb{R}^r\\ x_1>x_2>\cdots >x_r}}d\mu_{q;a_1,\dots,a_r}(x_1,\dots,x_r).
\end{equation}

All the results we obtain in this paper are conditional on the same two hypotheses (namely GRH and GSH) as the work of Rubinstein and Sarnak. In \cite{FK1}, Ford and Konyagin showed that assumptions on the locations of the zeros of Dirichlet $L$-functions are indeed necessary in order to obtain results on prime number races with three or more competitors.

In the case of a race between two residue classes $a$ and $b$ modulo $q$, Rubinstein and Sarnak proved that $\delta_{q;a,b}=\delta_{q;b,a}=1/2$ if $a$ and $b$ are both squares or both non-squares modulo $q$, and otherwise $\delta_{q;a,b}>1/2$ if $a$ is a non-square and $b$ is a square modulo $q$ (note that $\delta_{q;b,a}=1-\delta_{q;a,b}$). They also showed that $\delta_{q;a,b}\to 1/2$ as $q\to \infty$, uniformly for all distinct reduced residue classes $a,b$ modulo $q$. In fact, they proved that in general all biases disappear when $q\to\infty$. Let
$$ \Delta_r(q):= \max_{(a_1,a_2,\dots,a_r)\in \mathcal{A}_r(q)}\left|\delta_{q;a_1,\dots,a_r}-\frac{1}{r!}\right|.$$
 Then for any fixed $r\geq 2$, Rubinstein and Sarnak showed that assuming GRH and GSH, we have
\begin{equation}
\Delta_r(q)\to 0 \text{ as } q\to \infty.
\end{equation}
For $r=2$, D. Fiorilli and G. Martin \cite{FiM} have recently established an asymptotic expansion for $\delta_{q;a,b}-1/2$ when $a$ is a non-square and $b$ is a square modulo $q$. A corollary of their results is that for $q$ large

$$\Delta_2(q)=\frac{1}{q^{1/2+o(1)}}.$$
A surprising consequence of our results is that
$\Delta_r(q)$ behaves in a complete different way when $r\geq 3$.
\begin{thma} Assume GRH and GSH. Let $r\geq 3$ be a fixed integer. If $q$ is large, we have
$$\Delta_r(q)\asymp_r \frac{1}{\log q}.$$
\end{thma}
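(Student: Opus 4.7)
The plan is to analyze the Rubinstein-Sarnak limiting random vector $X=(X_1,\ldots,X_r)$, for which $\delta_{q;a_1,\ldots,a_r} = \Pr(X_1 > \cdots > X_r)$. Under GRH and GSH one has $X_i = -c_q(a_i) + \sum_{\chi\ne\chi_0}\overline{\chi(a_i)}\,Y_\chi$ with the $Y_\chi$ independent mean-zero random variables supported on the nontrivial zeros of $L(s,\chi)$. I would first compute the mean $\mu_i = -c_q(a_i) = O_r(1)$ and covariance $C_{ij} = \sum_{\chi\ne\chi_0}V(\chi)\,\overline{\chi(a_i)}\chi(a_j)$, where $V(\chi) = \sum_\gamma 2/(\tfrac14+\gamma^2)$ has typical size $\log q$ under GRH. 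Writing $\bar V$ for the average of $V(\chi)$ over nontrivial $\chi$, character orthogonality gives $C_{ii} = \sigma^2 \sim \phi(q)\log q$ and $C_{ij} = -\bar V + E_{ij}$ for $i\ne j$, with $E_{ij} := \sum_{\chi\ne\chi_0}(V(\chi)-\bar V)\chi(a_j a_i^{-1})$.

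Next I would approximate $X$ by the Gaussian vector $G\sim\mathcal{N}(\mu,C)$ and reduce to the Gaussian orthant probability $\Pr(G_1 > \cdots > G_r)$. After rescaling by $\sigma$, the correlation matrix of $G/\sigma$ is approximately exchangeable with constant off-diagonal $-1/\phi(q)$, and the rescaled mean is $O(1/\sqrt{\phi(q)\log q})$. When the correlations are exactly exchangeable and the means equal, symmetry forces the orthant probability to be $1/r!$; perturbing around this point yields an expansion
$$\delta_{q;a_1,\ldots,a_r} - \frac{1}{r!} = \sum_{i<j}\kappa_{ij}^{(r)}\,\frac{E_{ij}}{\sigma^2} + \sum_i\lambda_i^{(r)}\,\frac{\mu_i}{\sigma} + (\text{non-Gaussian error}),$$
with explicit bounded Gaussian sensitivities $\kappa_{ij}^{(r)}, \lambda_i^{(r)}$. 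The mean contribution is $O(1/\sqrt{\phi(q)\log q}) = o(1/\log q)$ and so is dominated by the covariance term.

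The upper bound $\Delta_r(q)\ll_r 1/\log q$ then follows from bounding $E_{ij}$: standard explicit-formula estimates for $V(\chi)$ under GRH give $|E_{ij}| \ll \phi(q)$ uniformly in $(i,j)$ and in the residues, so $|E_{ij}|/\sigma^2 \ll 1/\log q$. For the lower bound I would exploit quadratic characters: choose $(a_1,\ldots,a_r)$ so that a real character $\chi_1$ takes different values $\pm 1$ on the ratios $a_j a_i^{-1}$. Because $\chi_1(\cdot)\in\{\pm 1\}$, the contribution of $\chi_1$ to $E_{ij}$ enjoys no phase cancellation and can shift specific $E_{ij}$ by an amount $\asymp V(\chi_1)\asymp\log q$. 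By combining several real characters coherently across the dyadic factorization of $(\mathbb{Z}/q\mathbb{Z})^*$, one produces $E_{ij}/\sigma^2$ of order $1/\log q$ in a sign pattern that survives weighting by the $\kappa_{ij}^{(r)}$, yielding a bias $\gg_r 1/\log q$.

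The principal obstacle is the lower bound, which hinges on two further points. First, one must show that the Gaussian sensitivities $\kappa_{ij}^{(r)}$ at the exchangeable point do not all vanish for $r\ge 3$. This is the essential qualitative distinction from $r=2$, where the single sensitivity vanishes identically by the symmetry of $G_1-G_2$, explaining the gap between $\Delta_r(q) \asymp 1/\log q$ and the Fiorilli-Martin bound $\Delta_2(q) = q^{-1/2+o(1)}$. Second, the non-Gaussian error in approximating $X$ by $G$ must be shown to be $o(1/\log q)$ uniformly in $(a_1,\ldots,a_r)$; this requires careful Fourier and moment bounds on $X$, exploiting the independence of the $Y_\chi$ under GSH and standard bounds on low-lying zeros of Dirichlet $L$-functions.
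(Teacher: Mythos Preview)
Your overall framework mirrors the paper's: Gaussian approximation of the Rubinstein--Sarnak distribution, with the deviation from $1/r!$ governed by the off-diagonal covariances $B_q(a_i,a_j)$ (your $E_{ij}$, up to normalization). The upper bound is essentially right: the paper proves $|B_q(a,b)|\ll\phi(q)$ uniformly (Corollary~5.4), and since the variance is $N_q\sim\phi(q)\log q$, this gives $|\delta_{q;a_1,\ldots,a_r}-1/r!|\ll_r 1/\log q$. Your identification of the non-vanishing of the sensitivities $\kappa_{ij}^{(r)}$ as the key distinction from $r=2$ is exactly right; the paper establishes this as $\beta_{1,r}(r)<0$ and $\beta_{r-1,r}(r)>0$ (Lemma~6.3).

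The genuine gap is in your lower bound. Your mechanism---picking residues so that a real character $\chi_1$ takes prescribed signs on the ratios---only shifts $E_{ij}$ by $O(V(\chi_1))=O(\log q)$ per character. Even summing over all real characters modulo $q$ gives at most $2^{\omega(q)}\log q=q^{o(1)}$, so $E_{ij}/\sigma^2=O(q^{-1+o(1)})$, far short of the required $1/\log q$. (In particular, for prime $q$ there is only one nontrivial real character and your argument collapses entirely.) What is actually needed is an $E_{ij}$ of size $\asymp\phi(q)$, i.e.\ a choice of $a_j/a_i$ for which \emph{all} $\phi(q)$ characters contribute coherently. The paper achieves this via the explicit formula (3.1): the term $-\chi(-1)\log 2$ in $\sum_\gamma(\tfrac14+\gamma_\chi^2)^{-1}$ means that taking $a_j/a_i\equiv -1$ yields $\sum_{\chi}\chi(-1)\cdot(-\chi(-1)\log 2)=-\phi(q)\log 2$; similarly, the $L'/L(1,\chi^*)$ term makes $a_j/a_i\equiv p^e$ (a small prime power) work. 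The paper's construction is $a_1=1,\,a_r=-1,\,a_j=(p_1p_2)^{2j}$ for $2\le j\le r-1$, which forces $B_q(a_1,a_r)=-\phi(q)\log 2+O(\log^2 q)$ while keeping all other $B_q(a_j,a_k)$ of size $(\log q)^{O_r(1)}$ (Proposition~6.1). Combined with $\beta_{1,r}(r)\ne 0$ this yields the lower bound. Your proposal lacks this arithmetic input.

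A minor correction: you write $\mu_i=-c_q(a_i)=O_r(1)$, but in fact $C_q(a_i)$ can be as large as $2^{\omega(q)}=q^{o(1)}$. This does not affect the argument since $C_q/\sqrt{N_q}=q^{-1/2+o(1)}$ is still $o(1/\log q)$.
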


Recall that a bias occurs in a two-way race $\{q;a_1,a_2\}$ if and only if  one of  the residue classes $a_1$ and $a_2$ is a square and the other is a non-square modulo $q$. An interesting problem is then to determine when these biases appear for general races $\{q;a_1,\dots,a_r\}$ with $r\geq 3$. To make things clear we need to precisely define the notions of ``biased'' and ``unbiased'' races.  Although Rubinstein and Sarnak called a race $\{q;a_1,\dots,a_r\}$  unbiased if the density function associated to the measure $\mu_{q;a_1,\dots,a_r}$ is symmetric, we believe that a more appropriate definition is the following
\begin{defin} Let $(a_1,\dots,a_r)\in \mathcal{A}_r(q)$. The race $\{q;a_1,\dots,a_r\}$ is said to be \emph{unbiased} if for every permutation $\sigma$ of the set $\{1,2,\dots,r\}$ we have
$$ \delta_{q;a_{\sigma(1)},\dots, a_{\sigma(r)}}=\delta_{q;a_1,\dots,a_r}=\frac{1}{r!}.$$
Furthermore, a race is said to be \emph{biased} if this condition does not hold.
\end{defin}
While investigating these biases we made the following interesting observation that if the race $\{q;a_1,\dots,a_r\}$ is unbiased then the races $\{q;a_{i_1},\dots,a_{i_s}\}$ are unbiased for any subset $\{i_1,\dots,i_s\}$ of $\{1,\dots, r\}.$ In view of  Rubinstein and Sarnak results on two-way races, this clearly shows that a race $\{q; a_1,\dots,a_r\}$ is biased if there are $1\leq i \neq j \leq r$ such that $a_i$ is a square and $a_j$ is a non-square modulo $q$. Furthermore, it is obvious from (1.1) that the race $\{q;a_1,\dots,a_r\}$ is unbiased if the density function of $\mu_{q;a_1,\dots,a_r}$ is symmetric. Rubinstein and Sarnak investigated the Fourier transform of $\mu_{q;a_1,\dots,a_r}$ for $r\geq 3$, and showed that the only case when this distribution is symmetric occurs when $r=3$ and
\begin{equation}
 a_2\equiv a_1 \rho \text{ mod } q, a_3\equiv a_1 \rho^2 \text{ mod }q,
\end{equation}
for some $\rho \neq 1$ with $\rho^3\equiv 1 \text{ mod } q.$ However, this result still leaves
open the possibility that unbiased races not verifying assumption (1.3) might exist (since, for example, a function can be positive half of the time without being symmetric). Nonetheless, Rubinstein and Sarnak conjectured that the only case when a race involving three or more competitors is unbiased corresponds to (1.3).
\begin{Con}[Rubinstein and Sarnak \cite{RS}] When $r\geq 3$, the race $\{q;a_1,\dots,a_r\}$ is unbiased if and only if $r=3$ and the residue classes $a_1, a_2,$ and $a_3$ satisfy assumption (1.3).
\end{Con}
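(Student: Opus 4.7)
The forward direction is immediate from (1.1): if $r=3$ and (1.3) holds, Rubinstein--Sarnak showed that the density of $\mu_{q;a_1,a_2,a_3}$ is symmetric on $\mathbb{R}^3$, so every permutation integral equals $1/6$. The substantive task is the converse: if $\{q;a_1,\dots,a_r\}$ is unbiased then $r=3$ and (1.3) holds. I organise the plan in three steps.

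First I would use the observation recorded just before the conjecture --- every sub-race of an unbiased race is unbiased --- combined with the Rubinstein--Sarnak two-way theorem to conclude that all of $a_1,\dots,a_r$ must be squares modulo $q$, or all non-squares. Indeed, if some $a_i$ were a square and some $a_j$ a non-square, the two-way race $\{q;a_i,a_j\}$ would have density strictly different from $1/2$, contradicting unbiasedness. Consequently the mean of $\mu_{q;a_1,\dots,a_r}$ lies on the diagonal $\mathbb{R}\cdot(1,\dots,1)$ and all the asymmetry is encoded in higher-order structure.

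Second, I would feed this into the explicit characteristic function of $\mu_{q;a_1,\dots,a_r}$. Under GRH and GSH it factors as a product of Bessel-type terms indexed by pairs $(\chi,\gamma)$ with $\chi$ a nontrivial character mod $q$ and $\gamma>0$ a zero of $L(s,\chi)$, with the variable $\xi\in\mathbb{R}^r$ entering only through the character-weighted combinations $\sum_{j=1}^r \xi_j\chi(a_j)$. The $r!$ orthant equalities defining unbiasedness translate, via an explicit orthant-Plancherel representation of $\delta_{q;a_{\sigma(1)},\dots,a_{\sigma(r)}}$, into a system of identities on $\widehat{\mu}$. At the quadratic order, these force the covariance matrix of $\mu$ to be exchangeable (equal diagonals, equal off-diagonals), which yields a character-sum identity in the ratios $a_ia_j^{-1}$ pinning them down, for $r=3$, to a cyclic triple of the form (1.3).

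The principal obstacle is that equality of orthant probabilities under all permutations does not, in general, imply exchangeability of the underlying distribution, so the quadratic step is insufficient on its own. For $r\ge 4$ I would attempt to close the argument by exploiting the third cumulant: the triple sums $\sum_\chi \chi(a_i)\chi(a_j)\overline{\chi(a_k)}$ (weighted by explicit zero sums) would have to vanish simultaneously for every unordered triple, and one hopes to show this forces a cyclic group structure of order $\ge r$ that is incompatible with the quadratic constraint unless $r=3$. For $r=3$ under (1.3), the Fourier product is manifestly invariant under the cyclic shift $(a_1,a_2,a_3)\mapsto(a_2,a_3,a_1)$ because $\chi(a_{i+1})=\chi(\rho)\chi(a_i)$ with $\chi(\rho)$ a cube root of unity, and combining this invariance with the Rubinstein--Sarnak symmetry characterisation closes this case. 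Making the higher-cumulant step rigorous --- i.e.\ excluding accidental cancellations between character sums at specific moduli $q$ --- is the genuinely difficult point, and is precisely why the full conjecture lies beyond the fixed-$a_i$, large-$q$ regime established in the present paper.
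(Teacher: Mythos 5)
The statement you are asked about is Conjecture~1, which the paper quotes from Rubinstein and Sarnak and explicitly does \emph{not} prove; it remains open, and the paper only establishes partial results toward a weakened version of it (Theorem~C / Theorem~3, for fixed $a_1,\dots,a_r$ satisfying condition i) or ii) and all but finitely many $q$). Your proposal is therefore not, and cannot be compared against, a proof in the paper. Within your outline, the ``if'' direction and the first reduction are sound: symmetry of the density under (1.3) gives unbiasedness via (1.1), and the sub-race observation together with the Rubinstein--Sarnak two-way theorem forces all the $a_i$ to be squares or all non-squares. But the converse is exactly where the conjecture lives, and your steps two and three do not close it --- as you yourself concede in the final sentence.

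The gap is concrete and twofold. First, as you note, equality of all $r!$ orthant probabilities does not force exchangeability of $\mu_{q;a_1,\dots,a_r}$, so matching finitely many cumulants (second, third, or any fixed number) can never rule out an ``accidentally'' unbiased race at a specific modulus $q$: the measure is determined by the full infinite product over zeros in (2.1), and the paper's quantitative tools (Theorem~1 and Proposition~3.3) only control $\delta_{q;a_1,\dots,a_r}-1/r!$ up to error terms like $O_r(1/N_q + C_qB_q/N_q^{3/2}+B_q^2/N_q^2)$, which is useless for certifying an \emph{exact} equality $\delta=1/r!$ or its failure at a fixed $q$. Second, your hoped-for ``cyclic group structure incompatible with the quadratic constraint unless $r=3$'' is not substantiated by any identity in the paper or in Rubinstein--Sarnak; their symmetry classification characterizes when the \emph{density} is symmetric, not when the orthant integrals happen to coincide. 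What the paper actually delivers in this direction is the opposite-flavored result: for $r\geq 3$ and large $q$ one can always \emph{construct} biased races among squares (Theorem~4), and for fixed integers $a_i$ satisfying i) or ii) the race is biased for all but finitely many $q$ (Theorem~3), via the explicit evaluation of $B_q(a_j,a_k)$ in Proposition~6.1 and the sign information in Lemma~6.3. You should present your text as a reduction and a programme, not as a proof.
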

A. Feuerverger and G. Martin \cite{FeM} were the first to exhibit explicit examples of biased races with three competitors, where the residue classes are either squares or non-squares not satisfying assumption (1.3). For example, they showed that the races $\{8;3,5,7\}$ and $\{12;5,7,11\}$ are biased. However, all the examples they considered satisfy $r\leq 4$ and $q\leq 12$, thus leaving open the problem of determining the existence of biased races of this type for any $q> 12 $ and $3\leq r\leq \phi(q).$ We solved this question for any fixed $r\geq 3$ if $q$ is large enough. Indeed we show that unlike two-way races, biases do appear in races involving three of more squares (or non-squares) modulo $q$, if $q$ is large.
\begin{thmb} Assume GRH and GSH. Let $r\geq 3$ be a fixed integer. Then there exists a positive number $q_0(r)$ such that for any $q\geq q_0(r)$ there are residue classes $(a_1,\dots,a_r), (b_1,\dots, b_r)\in \mathcal{A}_r(q)$, with $a_1,\dots,a_r$ being all squares and $b_1,\dots,b_r$ being all non-squares, such that both the races $\{q;a_1,\dots,a_r\}$ and $\{q;b_1,\dots,b_r\}$ are biased.
\end{thmb}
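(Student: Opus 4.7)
My plan is to invoke the asymptotic formula for $\delta_{q;a_1,\ldots,a_r}$ that underlies Theorem A. Schematically, this formula should take the form
$$\delta_{q;a_1,\ldots,a_r} - \frac{1}{r!} = \frac{D_r(a_1,\ldots,a_r)}{\log q} + O_r\!\left(\frac{1}{(\log q)^2}\right),$$
where $D_r$ is an explicit function of the residue classes expressible through Dirichlet characters modulo $q$. To obtain Theorem B it suffices to exhibit, for every large $q$, an all-squares tuple (and, separately, an all-non-squares tuple) together with a permutation $\sigma$ such that $D_r(a_1,\ldots,a_r) \neq D_r(a_{\sigma(1)},\ldots,a_{\sigma(r)})$; the corresponding densities then differ by a quantity of order $1/\log q$, so at least one of them is $\neq 1/r!$ once $q$ is large enough, forcing the race to be biased according to the definition given in the paper.

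By the inheritance observation stated just before the Rubinstein--Sarnak conjecture, unbiasedness propagates to sub-races. It is therefore enough to produce a biased all-squares triple $\{q;a_1,a_2,a_3\}$ and a biased all-non-squares triple $\{q;b_1,b_2,b_3\}$ for every large $q$; the higher-$r$ cases follow by appending additional residues of the same type and applying the inheritance principle in the contrapositive. For $r=3$ I would evaluate $D_3$ on same-type triples. When all three residues are squares (or all three non-squares), the dominant Chebyshev-type contributions coming from the real nonprincipal character drop out, and $D_3$ reduces to a sum over the non-real characters modulo $q$. This is precisely why $D_2$ vanishes identically on same-type pairs (the elementary identity $\delta_{q;a,b}+\delta_{q;b,a}=1$ forces it), but no analogous algebraic constraint exists for $r \geq 3$. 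The remaining task is to show that the resulting character sum is not invariant under every permutation of $(a_1,a_2,a_3)$ for at least one same-type triple; this can be attacked either by averaging over the $\gg \phi(q)^3$ ordered all-squares (resp.\ all-non-squares) triples and showing that the average of a suitable permutation-antisymmetric combination of $D_3$'s is too large to come from the error term, or by an explicit construction using residues avoiding the exceptional configuration (1.3).

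The main obstacle is verifying this non-invariance cleanly and uniformly in $q$. One must rule out accidental cancellations in the residual character sums, which could conceivably happen for moduli $q$ with particularly rich structure (e.g.\ many small prime factors, or a large subgroup of order dividing $3$ in $(\mathbb{Z}/q\mathbb{Z})^\times$). The optimistic picture is that the exceptional configuration (1.3) accounts for a vanishing fraction of all same-type triples as $q \to \infty$, so a pigeonhole argument over triples should produce the desired biased examples; however, converting this heuristic into a valid uniform lower bound requires a careful dissection of the Fourier-analytic expression for $D_3$ into pieces that can be bounded using the orthogonality of Dirichlet characters together with the independence properties afforded by GSH.
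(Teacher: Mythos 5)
Your proposal correctly identifies the overall shape of the argument: pass to the asymptotic formula (Corollary~3 of the paper in the all-squares/all-non-squares case), note that the $C_q(a_j)$ terms drop out because $\sum_j \alpha_j(r)=0$ while all $C_q(a_j)$ are equal, and then show that the residual term $\frac{1}{N_q}\sum_{j<k}\beta_{j,k}(r)B_q(a_j,a_k)$ is not permutation-invariant for some well-chosen same-type tuple. Your reduction to $r=3$ via the inheritance principle is valid (the paper uses this device for Theorem~3, via Lemma~7.1, though for Theorem~4 it instead constructs directly at general $r$).

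However, there is a genuine gap: you stop precisely at the hard step. You propose two possible attacks --- averaging over all same-type triples, or an explicit construction --- and then spend the rest of the write-up worrying about ``accidental cancellations'' for structured moduli, without resolving them. The averaging route is not carried out (and is not what the paper does). The explicit construction is the route the paper actually takes, and it requires two ingredients you do not supply. First, one needs a usable description of $B_q(a,b)$ for residues $a,b$ that are small compared to $q$: Proposition~6.1 shows $B_q(a,b) = -\phi(q)\Lambda_0\bigl(\max(|a|,|b|)/\min(|a|,|b|)\bigr) + O\bigl((|a|+|b|)\log^2 q\bigr)$ when $a,b$ have the same sign, so $B_q(a,b)$ is of order $\phi(q)$ exactly when the ratio is a prime power, and negligible otherwise. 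Second, one needs a concrete choice that isolates one large term: the paper takes $a_1=1$, $a_r=p_1^2$, and $a_j=(p_1p_2)^{2j}$ for $2\le j\le r-1$ (with $p_1<p_2$ small primes coprime to $q$), which forces $B_q(a_1,a_r)\asymp -\phi(q)\log p_1/p_1^2$ while all other $B_q(a_j,a_k)$ are $O((\log q)^{4r})$ because $p_1p_2$ divides the relevant ratio. Coupled with Lemma~6.3, which establishes $\beta_{1,r}(r)<0$ and $\beta_{r-1,r}(r)>0$, the identity permutation and the transposition $(1\ r{-}1)$ push the density in opposite directions by $\gg 1/\log^3 q$, which is what makes the race biased. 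Without these two ingredients --- the $\Lambda_0$ evaluation of $B_q$ and the sign information on the $\beta_{j,k}(r)$ --- the worry about cancellations you raise is exactly what prevents your sketch from closing, so the proposal as written does not yet constitute a proof.

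Two smaller points. The reason the $r=2$ case is degenerate is not the identity $\delta_{q;a,b}+\delta_{q;b,a}=1$ by itself, but rather that $\beta_{1,2}(2)=0$ (Lemma~4.5), so the $B_q$ term carries no weight already at the level of the asymptotic expansion. Also, the phrase ``contributions coming from the real nonprincipal character drop out'' should be ``the $C_q(a_j)$ contributions drop out''; the mechanism is the symmetry identity (2.4), not anything specific to the quadratic character.
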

For distinct non-zero integers  $a_1,\dots, a_r$, we define  $\mathcal{Q}_{a_1,\dots,a_r}$ to be the set of positive integers $q$ such that $a_1,\dots,a_r$ are distinct modulo $q$, and  $(q,a_i)=1$ for all $1\leq i\leq r$.
 When $r=3$, assumption (1.3) implies that $a_1^2\equiv a_2a_3\text{ mod }q$, $a_2^2\equiv a_1a_3\text{ mod }q$, and $a_3^2\equiv a_1a_2\text{ mod }q$. Hence if $q> 2\max(|a_i|^2)$ then these congruences become identities. However, since the $a_i$ are assumed to be distinct these equalities can not hold. This leads to a weak form of Conjecture 1 of Rubinstein and Sarnak:
\begin{Con} Let $r\geq 3$ and $a_1,\dots,a_r$ be distinct non-zero integers. Then for all positive integers $q\in \mathcal{Q}_{a_1,\dots,a_r}$ such that $q>2\max(|a_i|^2)$, the race $\{q;a_1,\dots,a_r\}$ is biased.
\end{Con}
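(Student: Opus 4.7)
The plan is to derive Conjecture~2 from the asymptotic formula for $\delta_{q;a_1,\dots,a_r}$ that underpins Theorem~A. That formula should yield, for $q$ large and any $\sigma\in S_r$,
$$\delta_{q;a_{\sigma(1)},\dots,a_{\sigma(r)}}=\frac{1}{r!}+\frac{M_\sigma(a_1,\dots,a_r;q)}{\log q}+O_r\!\left(\frac{1}{(\log q)^{2}}\right),$$
where $M_\sigma$ is an explicit function of the mean-vector entries $c_q(a_i):=-1+\#\{y\in(\mathbb{Z}/q\mathbb{Z})^\times:y^2\equiv a_i\pmod q\}$ and of the character values $\chi(a_i)$. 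By the definition of unbiased, the race is unbiased only if $M_\sigma(a_1,\dots,a_r;q)=0$ for every $\sigma\in S_r$, so it is enough to derive a contradiction from this system of vanishing conditions.

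The first step is to combine the equations $M_\sigma=0$ into a tractable algebraic system. Differences $M_\sigma-M_{\sigma\cdot\tau}$, with $\tau$ an adjacent transposition, isolate pairwise contributions $c_q(a_i)-c_q(a_{i+1})$ together with simple character-sum analogues, while the reversal $\sigma\mapsto\sigma^{\mathrm{rev}}$ isolates the fully antisymmetric part. Character-orthogonality on $(\mathbb{Z}/q\mathbb{Z})^\times$ then collapses the non-principal contributions and produces a finite set of polynomial congruences $F_j(a_1,\dots,a_r)\equiv 0\pmod q$ with $\mathbb{Z}$-coefficients and total degree at most $2$ in the $a_i$.

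The second step is immediate integer arithmetic: since $|F_j(a_1,\dots,a_r)|\leq C(r)\max_i|a_i|^2$, the hypothesis $q>2\max_i|a_i|^2$ upgrades each congruence to an integer equality $F_j(a_1,\dots,a_r)=0$. For $r=3$ this system reduces (after elementary manipulation) to condition~$(1.3)$, i.e.\ to $a_1^2=a_2a_3$ together with its cyclic variants, which as observed in the discussion preceding Conjecture~2 is incompatible with the $a_i$ being distinct non-zero integers. For $r\geq 4$ no analogous integer solution exists under distinctness, and the contradiction is immediate.

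The main obstacle will be the first step: extracting from the raw vanishing conditions $M_\sigma=0$ a complete set of polynomial identities of degree at most $2$. This requires separating the contributions of real and non-real characters — real characters drive the Chebyshev-type biases already exploited in Theorem~B, while non-real characters govern the finer asymmetries across permutations — and identifying precisely which permutation-differences detect which algebraic invariants. The Fourier-analytic description of $\mu_{q;a_1,\dots,a_r}$ due to Rubinstein--Sarnak, combined with the sharper expansions of this paper, should supply the necessary tools, but matching the degree bound $2$ to the hypothesis $q>2\max_i|a_i|^2$ is the delicate point to secure.
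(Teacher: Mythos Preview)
The statement you are attempting to prove is Conjecture~2, which the paper presents as an \emph{open conjecture}. The paper does not prove it; it only establishes the partial result Theorem~C (via Theorem~3), which shows the race is biased for all but finitely many $q\in\mathcal{Q}_{a_1,\dots,a_r}$ under the additional hypothesis that some $a_j+a_k=0$ or some $a_j/a_k$ is a prime power. There is therefore no proof in the paper against which to compare your proposal.

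Your outline also contains genuine gaps. First, the asymptotic formula you invoke (the paper's Theorem~1) is valid only for $q$ sufficiently large in terms of $r$; it cannot reach down to the threshold $q>2\max_i|a_i|^2$ that the conjecture demands, which is why even the paper's partial result concedes finitely many exceptions. Second, your key claim---that the vanishing conditions $M_\sigma=0$ collapse via character orthogonality to polynomial congruences $F_j(a_1,\dots,a_r)\equiv 0\pmod q$ of degree at most~2---is unsubstantiated and does not match the structure of the actual secondary terms. In the paper those terms are governed by the quantities $B_q(a_j,a_k)$, whose leading contribution (Proposition~6.1) is $-\phi(q)\Lambda_0(\max(|a_j|,|a_k|)/\min(|a_j|,|a_k|))$, a transcendental von Mangoldt--type expression in the \emph{ratios} $a_j/a_k$, not a polynomial of degree~2 in the $a_i$. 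The degree-2 congruences $a_1^2\equiv a_2a_3\pmod q$ etc.\ arise in the paper only from the \emph{sufficient} symmetry condition~(1.3) for unbiasedness, not from any necessary condition extracted from the asymptotic; reversing that implication is precisely the content of the open Conjecture~1, and your sketch does not supply the missing mechanism.
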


We prove the following partial result towards this conjecture, which follows from Theorem 3 below.
\begin{thmc} Let $r\geq 3$ and $a_1,\dots,a_r$ be distinct non-zero integers such that one of the conditions below occur

i) There exist $1\leq j\neq k\leq r$ such that $a_j+a_k=0$.

ii) There exist $1\leq j\neq k\leq r$ such that $a_j/a_k$ is a prime power.

\noindent Then for all but finitely many $q\in \mathcal{Q}_{a_1,\dots,a_r}$, the race $\{q;a_1,\dots,a_r\}$ is biased.
\end{thmc}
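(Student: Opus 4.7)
The plan is to derive Theorem~C from Theorem~3 via a contrapositive argument: assume the race $\{q;a_1,\ldots,a_r\}$ is unbiased for some $q\in\mathcal{Q}_{a_1,\ldots,a_r}$ and show that, under hypothesis (i) or (ii), this forces $q$ into a finite set. The starting point is the observation recorded in the excerpt that every sub-race of an unbiased race is itself unbiased; in particular the three-element sub-race $\{q;a_j,a_k,a_\ell\}$ (for any $\ell\notin\{j,k\}$, which exists because $r\geq 3$) must be unbiased.

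Theorem~3 will furnish a sufficient criterion for a three-way race to be biased whose conclusion can fail only in the Rubinstein and Sarnak symmetric configuration~(1.3), i.e.\ three classes of the form $(c,c\rho,c\rho^2)\pmod{q}$ with $\rho$ a primitive cube root of unity modulo $q$. Granting this, the problem reduces to showing that the triple $(a_j,a_k,a_\ell)$ does not match~(1.3) for all but finitely many $q\in\mathcal{Q}_{a_1,\ldots,a_r}$. Since~(1.3) forces every pairwise ratio of the triple to equal either $\rho$ or $\rho^2$, it in particular forces $(a_j/a_k)^3\equiv 1\pmod{q}$ together with $a_j/a_k\not\equiv 1\pmod{q}$.

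Under hypothesis (i), $a_j/a_k=-1$, and $(-1)^3\equiv 1\pmod{q}$ requires $q\mid 2$, which is impossible for $q\geq 3$. Under hypothesis (ii), $a_j/a_k=p^m$ for a prime $p$ and $m\geq 1$, and $p^{3m}\equiv 1\pmod{q}$ restricts $q$ to divisors of $p^{3m}-1$, a finite set. In either case~(1.3) holds for at most finitely many $q$, and outside this exceptional set Theorem~3 delivers the bias of the sub-race $\{q;a_j,a_k,a_\ell\}$; by the contrapositive of the sub-race observation this propagates to a bias of the original race $\{q;a_1,\ldots,a_r\}$, completing the argument.

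The main obstacle is Theorem~3 itself. Once it is established, the reduction presented here is essentially elementary: the sub-race principle combined with an order-of-element check in $(\mathbb{Z}/q\mathbb{Z})^*$. Theorem~3 however must do the real analytic work, extracting from the asymptotic formula for $\delta_{q;a_1,\ldots,a_r}$ announced in the abstract an explicit quantitative invariant whose non-vanishing forces the race to be biased, and verifying that this invariant vanishes only in configuration~(1.3).
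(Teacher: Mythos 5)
Your argument is built on a misreading of what Theorem~3 actually says, and that misreading is a genuine gap. You envision Theorem~3 as characterizing when a three-way race is \emph{unbiased}, with the characterization being the Rubinstein--Sarnak symmetric configuration~(1.3); from this you derive, via the contrapositive and the sub-race principle, that unbiasedness forces $q$ into a finite set. But the converse direction ``unbiased $\Rightarrow$ configuration~(1.3)'' is exactly the content of Rubinstein and Sarnak's Conjecture~1, which is open; Theorem~3 does not prove it and the paper never claims to. What Theorem~3 actually establishes is that for distinct integers $a_1,\dots,a_r$ with $|a_i|\leq A$ and $q$ large with $(q,a_i)=1$, the race $\{q;a_1,\dots,a_r\}$ is ``$q$-\emph{extremely biased}'' (Definition~2: some permutation of the density differs from $1/r!$ by $\gg_r 1/\log q$) \emph{if and only if} hypothesis i) or ii) holds. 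When neither holds, Theorem~3 only bounds the discrepancy by $O(\log q/q)$ or $O(q^{-1/2+\epsilon})$; it does \emph{not} conclude the race is unbiased. So your pivotal step --- ``the criterion can fail only in~(1.3)'' --- has no support in the paper.

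Once you have the correct statement of Theorem~3, the derivation collapses to a single observation and none of your machinery is needed. Since i) or ii) holds for the tuple $(a_1,\dots,a_r)$ by hypothesis, take $A=\max_i|a_i|$; then for all but finitely many $q\in\mathcal{Q}_{a_1,\dots,a_r}$ we have $q$ large and $(q,a_i)=1$, so Theorem~3 says the race is $q$-extremely biased, and $q$-extremely biased trivially implies biased ($\delta_{q;a_{\sigma(1)},\dots,a_{\sigma(r)}}\neq 1/r!$ for some $\sigma$). No reduction to three-way sub-races is needed because Theorem~3 already applies to arbitrary $r\geq 3$, and no appeal to configuration~(1.3) or to the order of $a_j/a_k$ in $(\mathbb{Z}/q\mathbb{Z})^*$ ever enters. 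Those computations you do with $(-1)^3\equiv1$ and $p^{3m}\equiv1\pmod q$ are correct arithmetic but belong to an argument the paper does not make.
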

To establish these results, we prove an asymptotic formula for $\delta_{q;a_1,\dots,a_r}$ valid for large $q$, and then we investigate the behavior of its first few terms. Our approach is different from the one used by Fiorilli and Martin \cite{FiM} in the case $r=2$. Indeed their idea consists of reducing the study of the measure $\mu_{q;a_1,a_2}$ (which is a measure on $\mathbb{R}^2$) to a related one-dimensional measure $\rho_q$ on $\mathbb{R}$, using an explicit formula of Feuerverger and Martin \cite{FeM}. Although this approach is natural for $r=2$, it is hardly generalizable to $r\geq 3$, due to the lack of symmetry in this case. Instead, we exploit the fact, used by Rubinstein and Sarnak to prove (1.2), that the Fourier transform of $\mu_{q;a_1,\dots,a_r}$ approaches a multivariate Gaussian in a certain range, when $q\to\infty$.

 In the next section we shall discuss these results in details. In particular we shall describe the asymptotic formula we prove for the densities $\delta_{q;a_1,\dots,a_r}$ and deduce further consequences.

 {\bf Acknowledgments.} I would like to thank Andrew Granville for introducing me to this delightful subject and for many comments and suggestions. I also thank Kevin Ford for several valuable discussions on the results of this paper.
\section{Detailed statement of results}
We shall use the following normalization for the Fourier transform of an integrable function $f:\mathbb{R}^n\to \mathbb{C}$

$$\hat{f}(t_1,\dots , t_n)=\int_{\mathbb{R}^n}e^{-i(t_1x_1+\cdots+ t_nx_n)}f(x_1,\dots, x_n)dx_1 \dots dx_n.$$
Then if $\hat{f}$ is integrable on $\mathbb{R}^n$ we have the Fourier inversion formula
$$ f(x_1,\dots, x_n)=(2\pi)^{-n}\int_{\mathbb{R}^n}e^{i(t_1x_1+\cdots+ t_nx_n)}\hat{f}(t_1,\dots, t_n)dt_1 \dots dt_n.$$
Similarly we write
$$\hat{\nu}(t_1,\dots, t_n)=\int_{\mathbb{R}^n}e^{-i(t_1x_1+\cdots+ t_nx_n)}d\nu(x_1, \dots, x_n)$$
 for the Fourier transform of a finite measure $\nu$ on $\mathbb{R}^n$. For $t\in \mathbb{R}^n$ we shall use the notations $||t||$ and $|t|_{\infty}$ for the Euclidian norm and the maximum norm of $t$ respectively.

 Assuming GRH and GSH, Rubinstein and Sarnak obtained an explicit formula for the Fourier transform of $\mu_{q;a_1,\dots,a_r}$ in terms of the non-trivial zeros of Dirichlet $L$-functions attached to non-principal characters modulo $q$. More specifically they showed that
\begin{equation}
\hat{\mu}_{q;a_1,\dots, a_r}(t_1,\dots,t_r)=  \exp\left(i\sum_{j=1}^rC_q(a_j)t_j\right)\prod_{\substack{\chi\neq \chi_0\\ \chi\text{ mod } q}}\prod_{\gamma_{\chi}>0}J_0\left(\frac{2\left|\sum_{j=1}^r\chi(a_j)t_j\right|}
{\sqrt{\frac14+\gamma_{\chi}^2}}\right)
\end{equation}
for $(t_1,\dots,t_r)\in \mathbb{R}^r$, where $\chi_0$ is the principal character modulo $q$,
$$ C_q(a):=-1+ \sum_{\substack{b^2\equiv a \text{ mod } q\\ 1\leq b\leq q}}1,$$  $J_0(z)=\sum_{m=0}^{\infty}(-1)^{m}(z/2)^{2m}/m!^2$ is the Bessel function of order $0$, and $\{\gamma_{\chi}\}$ denotes the set of imaginary parts of the non-trivial zeros of $L(s,\chi)$. Note that for $(a,q)=1$ the function $C_q(a)$ takes only two values: $C_q(a)=-1$ if $a$ is a non-square modulo $q$, and $C_q(a)=C_q(1)$ if $a$ is a square modulo $q$. An exercise in elementary number theory shows that $C_q(1)\asymp 2^{\omega(q)}$, where $\omega(q)$ denotes the number of distinct prime factors of $q$. In particular this implies that $C_q(a)\ll_{\epsilon}q^{\epsilon}$ for any $\epsilon>0$.

For $r<\phi(q)$ Rubinstein and Sarnak showed that $\hat{\mu}_{q; a_1,\dots,a_r}(t)$ is rapidly decreasing as $||t||\to\infty$ (we shall quantify this statement in Section 3 below) from which they deduced that the measure $\mu_{q; a_1,\dots,a_r}$ is absolutely continuous. Feuerverger and Martin \cite{FeM} obtained a general formula for $\delta_{q; a_1,\dots, a_r}$ in terms of certain variants of the Fourier transform $\hat{\mu}_{q; a_1,\dots, a_r}$, and used these formulas to rigourously compute certain densities for $r\leq 4$ and $q\leq 12$.

In \cite{FiM}, Fiorilli and Martin used Feuerverger and Martin formula for the case $r=2$ to prove an asymptotic formula for the density $\delta_{q;a_1,a_2}$. More precisely they showed that
\begin{equation}
\delta_{q;a_1,a_2}= \frac{1}{2}- \frac{C_q(a_1)-C_q(a_2)}{\sqrt{2\pi V_q(a_1,a_2)}}+ O\left(\frac{C_q(1)^{3}}{ V_q(a_1,a_2)^{3/2}}\right),
\end{equation}
where $V_q(a_1,a_2)= 2N_q-2B_q(a_1,a_2)$ (see (2.3) below).  In Section 8 we shall derive this asymptotic using a slight modification of our method.

Before stating our main result, let us define some notation which shall be used throughout this paper. Let
\begin{equation}
N_q:=2\sum_{\substack{\chi\neq \chi_0\\ \chi\text{ mod } q}}\sum_{\gamma_{\chi}>0}\frac{1}{\frac14+\gamma_{\chi}^2},   \text{ and } B_q(a,b):=\sum_{\substack{\chi\neq \chi_0 \\ \chi\text{ mod } q}}\sum_{\gamma_{\chi}>0}\frac{\chi\left(\frac{b}{a}\right)+\chi\left(\frac{a}{b}\right)}{\frac14 +\gamma_{\chi}^2},
\end{equation}
for $(a,b)\in \mathcal{A}_2(q)$ (recall that $\mathcal{A}_2(q)$ is the set of ordered pairs of distinct reduced residue classes modulo $q$).
It follows from the work of Rubinstein and Sarnak that $N_q\sim\phi(q)\log q.$
Moreover, we shall prove using the work of Fiorilli and Martin that $B_q(a,b)\ll \phi(q).$
We also put
$$ C_q=C_q(a_1,\dots,a_r):=\max_{1\leq j\leq q} |C_q(a_j)|, \text{ and } B_q=B_q(a_1,\dots,a_r):=\max_{1\leq j<k\leq r}|B_q(a_j,a_k)|.$$
Finally for $1\leq j\neq k\leq r$, we define the following integrals which shall appear in the asymptotic of $\delta_{q;a_1,\dots,a_r}$
$$ \alpha_j(r):=(2\pi)^{-r/2}\int_{x_1>x_2>\dots>x_r}x_j\exp\left(-\frac{x_1^2+\cdots+x_r^2}{2}\right) dx_1\dots dx_r,$$
$$ \lambda_j(r):=(2\pi)^{-r/2}\int_{x_1>x_2>\dots>x_r}(x_j^2-1)\exp\left(-\frac{x_1^2+\cdots+x_r^2}{2}\right) dx_1\dots dx_r,$$
and $$\beta_{j,k}(r):=(2\pi)^{-r/2}\int_{x_1>x_2>\dots>x_r}x_jx_k\exp\left(-\frac{x_1^2+\cdots+x_r^2}{2}\right) dx_1\dots dx_r.$$
\begin{Thm} Assume GRH and GSH. Fix an integer $r\geq 2$. If $q$ is a large positive integer and $(a_1, \dots, a_r)\in \mathcal{A}_r(q)$, then
\begin{equation*}
\begin{aligned}
&\delta_{q;a_1,\dots, a_r}=\frac{1}{r!}-\frac{1}{\sqrt{N_q}}\sum_{1\leq j\leq r}\alpha_j(r)C_q(a_j)+\frac{1}{N_q}\sum_{1\leq j<k\leq r}\beta_{j,k}(r)B_q(a_j,a_k)\\
&+\frac{1}{2N_q}\left(\sum_{1\leq j\leq r}\lambda_j(r)C_q(a_j)^2+2\sum_{1\leq j<k\leq r}\beta_{j,k}(r)C_q(a_j)C_q(a_k)\right)+O_r\left(\frac{1}{N_q}+ \frac{C_qB_q}{N_q^{3/2}}+ \frac{B_q^2}{N_q^2}\right).\\
\end{aligned}
\end{equation*}
 \end{Thm}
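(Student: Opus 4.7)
The plan is to work directly with the explicit formula (2.1) for the characteristic function and compare $\mu_{q;a_1,\dots,a_r}$, after a diffusive rescaling by $\sqrt{N_q}$, with a standard $r$-dimensional Gaussian. Writing $Y=X/\sqrt{N_q}$ for a random vector with law $\mu_{q;a_1,\dots,a_r}$, the rescaled characteristic function is
$$\hat\mu_Y(t)=\exp\!\left(\frac{i}{\sqrt{N_q}}\sum_j C_q(a_j)t_j\right)\prod_{\chi\neq\chi_0}\prod_{\gamma_\chi>0}J_0\!\left(\frac{2\bigl|\sum_j\chi(a_j)t_j\bigr|}{\sqrt{N_q}\sqrt{1/4+\gamma_\chi^2}}\right).$$
On a central region $\|t\|\le T$ (with $T$ a small power of $\log q$, say), every Bessel argument is $o(1)$, so I would apply the Taylor expansion $\log J_0(z)=-z^2/4-z^4/64+O(z^6)$ uniformly. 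The quadratic term assembles, using $|\sum_j\chi(a_j)t_j|^2=\sum_{j,k}\chi(a_j/a_k)t_jt_k$ and the definitions of $N_q$ and $B_q(a_j,a_k)$, into
$$-\tfrac{1}{2}\,t^{\mathsf T}\!\left(I+\tfrac{1}{N_q}B\right)t,\qquad B_{jk}=B_q(a_j,a_k)\ (j\neq k),\ B_{jj}=0,$$
while the $z^4$ and higher terms contribute a factor $1+O(1/N_q)$ after rescaling.

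Next I would split the Fourier inversion integral defining the density $f_Y$ of $Y$ into this central region plus its complement. On the tail $\|t\|>T$, I would quote the Rubinstein–Sarnak rapid-decay bound on $\hat\mu$ (valid under GRH+GSH as long as $r<\phi(q)$), which makes the tail contribution negligible compared with the error terms claimed. On the central region I would replace $\hat\mu_Y$ by its Gaussian approximant times $1+(\text{explicit correction})$, inverse-transform, and obtain
$$f_Y(y)=\phi\!\left(y+\tfrac{C_q(\cdot)}{\sqrt{N_q}}\right)\!\left(1+\tfrac{1}{2N_q}\,y^{\mathsf T}By\right)+E(y),$$
with $E$ controlled pointwise by quantities that integrate to $O_r(N_q^{-1}+C_qB_qN_q^{-3/2}+B_q^2N_q^{-2})$ over any measurable set. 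Here $\phi$ is the standard Gaussian density; the mean shift arises from the phase in (2.1), and the covariance perturbation from the off-diagonal part of the quadratic form computed above.

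Finally, I would integrate $f_Y$ over the region $R=\{y_1>y_2>\cdots>y_r\}$. Using $\partial_j\phi=-y_j\phi$ and $\partial_j\partial_k\phi=(y_jy_k-\delta_{jk})\phi$, a two-term Taylor expansion of $\phi(y+C_q/\sqrt{N_q})$ at the standard Gaussian produces
$$\tfrac{1}{r!}-\tfrac{1}{\sqrt{N_q}}\sum_j\alpha_j(r)C_q(a_j)+\tfrac{1}{2N_q}\!\left(\sum_j\lambda_j(r)C_q(a_j)^2+2\sum_{j<k}\beta_{j,k}(r)C_q(a_j)C_q(a_k)\right)\!,$$
while integrating the covariance correction $\tfrac{1}{2N_q}y^{\mathsf T}By\,\phi(y)$ against the indicator of $R$ yields $\tfrac{1}{N_q}\sum_{j<k}\beta_{j,k}(r)B_q(a_j,a_k)$. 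The cross term between the mean shift and the covariance perturbation is $O(C_qB_q/N_q^{3/2})$ and is absorbed into the error.

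The main obstacle will be calibrating the cutoff $T$ and quantifying the Gaussian approximation uniformly. One needs $T$ small enough that every Bessel argument is safely inside the radius of convergence of the $\log J_0$ expansion (so that cubic and higher corrections are harmless), yet large enough that the Gaussian mass outside $\{\|t\|\le T\}$ is below the desired error; this is where the Rubinstein–Sarnak tail estimate must be matched to the central expansion. A secondary technical point is that the indicator of $R$ is not Schwartz, so to make Fourier inversion rigorous I would first smooth $\mathbf{1}_R$ and let the smoothing parameter tend to zero using the absolute continuity of $\mu_{q;a_1,\dots,a_r}$, or alternatively integrate $f_Y$ directly after establishing pointwise estimates with super-polynomial decay in $\|y\|$.
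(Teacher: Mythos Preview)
Your proposal is correct and follows essentially the same route as the paper: expand $\log J_0$ in a central region $\|t\|\le A\sqrt{\log q}$ to extract the Gaussian approximant with mean shift $C_q(\cdot)/\sqrt{N_q}$ and off-diagonal covariance $B/N_q$ (this is Proposition~3.3), invoke the quantified rapid-decay estimate on the complement (Proposition~3.2), and read off the $\alpha_j,\lambda_j,\beta_{j,k}$ integrals after integrating over $\{y_1>\cdots>y_r\}$. The one organizational difference is that the paper never passes through a pointwise density formula for $f_Y$; instead it first truncates the $y$-integration to the box $|y|_\infty\le\sqrt{N_q}\log q$ via a Chernoff-type tail bound on the measure itself (Proposition~4.1, using the Laplace transform and $I_0(s)\le e^{s^2/4}$), and only then applies Fourier inversion and expands $\hat\mu_q(t/\sqrt{N_q})$ term by term inside the double integral---this cleanly bypasses both your proposed smoothing of $\mathbf 1_R$ and the need for pointwise super-polynomial decay, and is a bit more direct than what you outline.
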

As a corollary we obtain
 \begin{Cor} Under the same assumptions of Theorem 1 we have
 $$\delta_{q;a_1,\dots, a_r}=\frac{1}{r!}-\frac{1}{\sqrt{N_q}}\sum_{1\leq j\leq r}\alpha_j(r)C_q(a_j)+\frac{1}{N_q}\sum_{1\leq j<k\leq r}\beta_{j,k}(r)B_q(a_j,a_k)+ O_r\left(\frac{C_q^2}{N_q}+ \frac{B_q^2}{N_q^2}\right).$$
 \end{Cor}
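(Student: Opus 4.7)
The plan is to derive the Corollary directly from Theorem 1 by absorbing the quadratic-in-$C_q$ terms into the error term, and then simplifying the error using AM–GM.

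First I would observe that the integrals $\lambda_j(r)$ and $\beta_{j,k}(r)$ depend only on $r$ and can be bounded by an absolute constant $M(r)$ (for instance by bounding the Gaussian weight and the integration region). Since $|C_q(a_j)| \leq C_q$ by definition, this immediately gives
\[
\left|\frac{1}{2N_q}\Bigl(\sum_{1\leq j\leq r}\lambda_j(r)C_q(a_j)^2 + 2\sum_{1\leq j<k\leq r}\beta_{j,k}(r)C_q(a_j)C_q(a_k)\Bigr)\right| \ll_r \frac{C_q^2}{N_q},
\]
so this whole block from Theorem 1 is absorbed into the new error term $O_r(C_q^2/N_q)$.

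Next I would dispose of the three old error terms. Since $C_q(a)$ is either $-1$ or $C_q(1) \geq 1$, we always have $C_q \geq 1$, hence $1/N_q \leq C_q^2/N_q$. For the cross term, the AM–GM inequality applied to $a = C_q/\sqrt{N_q}$ and $b = B_q/N_q$ yields
\[
\frac{C_q B_q}{N_q^{3/2}} = ab \leq \frac{1}{2}\bigl(a^2+b^2\bigr) = \frac{1}{2}\left(\frac{C_q^2}{N_q} + \frac{B_q^2}{N_q^2}\right),
\]
so it is also absorbed. Finally $B_q^2/N_q^2$ is already present in the new error term.

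Combining these bounds with Theorem 1 gives the Corollary. There is no significant obstacle here — the statement is a purely formal consequence of Theorem 1, and the only thing to check is that the uniform boundedness of $\alpha_j(r), \lambda_j(r), \beta_{j,k}(r)$ in terms of $r$ (which is clear from their definitions as Gaussian integrals against polynomial weights) makes the implicit constants depend only on $r$.
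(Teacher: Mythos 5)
Your proposal is correct, and it is exactly the intended (implicit) derivation: the paper states the Corollary without proof precisely because it is a formal consequence of Theorem 1, obtained by absorbing the second-order $C_q$ block into $O_r(C_q^2/N_q)$ and noting that $1/N_q \le C_q^2/N_q$ (since $C_q\ge 1$, as each $|C_q(a_j)|\ge 1$) and $C_qB_q/N_q^{3/2}\le \frac12(C_q^2/N_q + B_q^2/N_q^2)$ by AM--GM.
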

 In particular, we get for $r=3$ that
\begin{Cor} Under the same assumptions of Theorem 1 we have
 \begin{equation*}
 \begin{aligned}
 \delta_{q;a_1,a_2, a_3}&=\frac{1}{6}+\frac{1}{4\sqrt{\pi N_q}}(C_q(a_3)-C_q(a_1))\\
 &+\frac{1}{4\pi\sqrt{3}N_q}(
 B_q(a_1,a_2)+B_q(a_2,a_3)-2B_q(a_1,a_3))
 + O\left(\frac{C_q^2}{N_q}+ \frac{B_q^2}{N_q^2}\right).
 \end{aligned}
 \end{equation*}
 \end{Cor}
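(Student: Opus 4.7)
Since this is a direct specialization of Corollary 1 to $r=3$, the plan is to evaluate the six Gaussian integrals $\alpha_j(3)$ and $\beta_{j,k}(3)$ explicitly. I will use the probabilistic interpretation: if $X_1,X_2,X_3$ are i.i.d.\ standard normal and $X_{(1)}>X_{(2)}>X_{(3)}$ denote their descending order statistics, then summing over the $3!$ permutations of the integration region gives
\[ 6\,\alpha_j(3)=\mathbb{E}[X_{(j)}], \qquad 6\,\beta_{j,k}(3)=\mathbb{E}[X_{(j)}X_{(k)}]. \]

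For the $\alpha_j$'s, the reflection $(x_1,x_2,x_3)\mapsto(-x_3,-x_2,-x_1)$ preserves both the region $\{x_1>x_2>x_3\}$ and the Gaussian weight, so $\alpha_3(3)=-\alpha_1(3)$ and $\alpha_2(3)=0$; combined with the classical value $\mathbb{E}[\max(X_1,X_2,X_3)]=3/(2\sqrt{\pi})$ (which follows from $3\int x\varphi(x)\Phi(x)^2\,dx$ and a short elementary integration), this gives $\alpha_1(3)=1/(4\sqrt{\pi})$, hence $-\sum_j\alpha_j(3)C_q(a_j)=(C_q(a_3)-C_q(a_1))/(4\sqrt{\pi})$. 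For the $\beta_{j,k}$'s, the same reflection yields $\beta_{1,2}(3)=\beta_{2,3}(3)$; using $\{X_{(1)},X_{(2)},X_{(3)}\}=\{X_1,X_2,X_3\}$ as a multiset, expanding $\mathbb{E}[(\sum_j X_j)^2]=3$, and subtracting $\mathbb{E}[\sum_j X_j^2]=3$ forces $\beta_{1,2}(3)+\beta_{1,3}(3)+\beta_{2,3}(3)=0$, so $\beta_{1,3}(3)=-2\beta_{1,2}(3)$.

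The remaining nontrivial step is to compute $\beta_{1,2}(3)=\mathbb{E}[X_{(1)}X_{(2)}]/6$. My plan is to subtract the sample mean $\bar X=(X_1+X_2+X_3)/3$, which is independent of the centered vector $(X_i-\bar X)_{i=1,2,3}$, so that $\mathbb{E}[X_{(1)}X_{(2)}]=\mathbb{E}[(X_{(1)}-\bar X)(X_{(2)}-\bar X)]+\mathbb{E}[\bar X^2]$. The centered vector lives in the two-dimensional subspace $\{y_1+y_2+y_3=0\}$, on which the restricted Gaussian law is isotropic. In polar coordinates $(R,\theta)$ on this plane, the event $\{X_1>X_2>X_3\}$ becomes an angular sector of opening $\pi/3$ (as it must, being $1/6$ of the full angle); the radial integral separates as $\int_0^\infty R^3 e^{-R^2/2}\,dR=2$, and a short angular integration of the resulting trigonometric polynomial yields $\mathbb{E}[X_{(1)}X_{(2)}]=\sqrt{3}/(2\pi)$, so $\beta_{1,2}(3)=1/(4\pi\sqrt{3})$. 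Substituting these explicit values into the formula of Corollary 1 collects the $B_q$ contributions into $B_q(a_1,a_2)+B_q(a_2,a_3)-2B_q(a_1,a_3)$ with coefficient $1/(4\pi\sqrt{3}\,N_q)$, giving the stated asymptotic; the one substantive calculation is the polar-coordinate evaluation of $\beta_{1,2}(3)$.
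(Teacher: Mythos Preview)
Your proposal is correct, and the overall structure matches the paper: both use the reflection $(x_1,x_2,x_3)\mapsto(-x_3,-x_2,-x_1)$ (the paper's Lemma~4.4) together with the sum-to-zero identities $\sum_j\alpha_j(3)=\sum_{j<k}\beta_{j,k}(3)=0$ (equation~(2.4)) to reduce everything to the two values $\alpha_1(3)$ and $\beta_{1,2}(3)$.

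Where you genuinely diverge is in computing those two numbers. The paper (Lemma~4.5) evaluates both by direct iterated integration: for $\beta_{1,2}(3)$ one integrates $x_1$ over $(x_2,\infty)$, then $x_2$ over $(x_3,\infty)$, then $x_3$ over $\mathbb{R}$, and the three steps collapse to $\tfrac{1}{2}(2\pi)^{-3/2}\int e^{-3x_3^2/2}\,dx_3=1/(4\pi\sqrt{3})$ in two lines. Your route through order statistics---decompose $X_{(j)}=Y_{(j)}+\bar X$ with $\bar X$ independent of the centered vector, then pass to polar coordinates in the plane $\{y_1+y_2+y_3=0\}$---is valid and gives the same answer, but is noticeably longer for this particular integral. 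What it buys you is a probabilistic interpretation that would scale more gracefully if one wanted moments of normal order statistics for larger $r$; for the present case the paper's bare-hands integration is the more economical choice.
(One small point of bookkeeping worth making explicit: the identity $\mathbb{E}[X_{(1)}X_{(2)}]=\mathbb{E}[(X_{(1)}-\bar X)(X_{(2)}-\bar X)]+\mathbb{E}[\bar X^2]$ relies on the cross term $\mathbb{E}[\bar X(Y_{(1)}+Y_{(2)})]$ vanishing, which holds because $\mathbb{E}[\bar X]=0$ and $\bar X$ is independent of the $Y_{(j)}$; you use this implicitly.)
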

 \emph{Remark 1}. The main difference between the cases $r=2$ and $r\geq 3$ lies in the fact that $\beta_{1,2}(2)=0$, which implies that the terms involving $B_q(a_j,a_k)$ are missing in the case $r=2$. Indeed, we shall later prove that the contribution of these terms can be $\gg_r 1/\log q$. This explains the surprising behavior of $\Delta_r(q)$ when $r\geq 3$, since $C_q(a)/\sqrt{N_q}=q^{-1/2+o(1)}.$ Remark also that our asymptotic formula is not accurate in the case $r=2$ since the error term may exceed the main term. We shall slightly modify the argument of the proof to handle this case in Section 8.

 Investigating the terms $B_q(a_j,a_k)$ and using the fact that $B_q\ll \phi(q)$, we prove the following result, which is stronger form of Theorem A.
 \begin{Thm} Assume GRH and GSH. Fix an integer $r\geq 3$, and let $q$ be a large positive integer. Then for all $(a_1,\dots,a_r)\in \mathcal{A}_r(q)$ we have
$$\left|\delta_{q;a_1,\dots,a_r}-\frac{1}{r!}\right|\ll_r \frac{1}{\log q}.$$
Moreover there exist residue classes $(b_1,\dots,b_r)$, $(d_1,\dots, d_r)\in \mathcal{A}_r(q)$ such that
$$ \delta_{q;b_1,\dots,b_r}> \frac{1}{r!}+\frac{c_1(r)}{\log q} \ \  \text{ and }  \ \ \delta_{q;d_1,\dots,d_r}<\frac{1}{r!}-\frac{c_1(r)}{\log q},$$
for some constant $c_1(r)>0$ which depends only on $r$.
\end{Thm}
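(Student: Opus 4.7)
The plan is to extract both halves of the theorem from Corollary~1.

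For the upper bound, insert the three size estimates. The bound $C_q(a_j)\ll_{\varepsilon} q^{\varepsilon}$ noted in the excerpt makes the $\alpha_j$-contribution $O(q^{-1/2+\varepsilon})$; the bound $B_q\ll\phi(q)$ together with $N_q\sim\phi(q)\log q$ makes the $\beta_{j,k}$-contribution $O_r(1/\log q)$; and the error $C_q^{2}/N_q+B_q^{2}/N_q^{2}$ is $O_r(1/(\log q)^{2})$. This yields $|\delta_{q;a_1,\dots,a_r}-1/r!|\ll_r 1/\log q$ uniformly in $(a_1,\dots,a_r)$.

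For the existence of biased tuples, the strategy is to produce a pair $(a,a')$ with $|B_q(a,a')|\asymp\phi(q)$ of definite sign, and to embed it in an $r$-tuple whose remaining pairs contribute lower order so that the $\beta_{j,k}$-sum in Corollary~1 is dominated by that pair. Writing
\[ B_q(a,b)=2\sum_{\chi\neq\chi_0}\mathrm{Re}\,\chi(b/a)\,S(\chi),\qquad S(\chi):=\sum_{\gamma_\chi>0}\frac{1}{\frac14+\gamma_\chi^{2}}, \]
the natural candidate is $(a,-a)$, since then $B_q(a,-a)=N_q^{+}-N_q^{-}$ with $N_q^{\pm}:=2\sum_{\chi(-1)=\pm 1,\,\chi\neq\chi_0}S(\chi)$. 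The explicit formula for $S(\chi)$ under GRH writes $2S(\chi)=\log q^{*}+\Psi_{\chi(-1)}+2\mathrm{Re}\,(L'/L)(1,\chi)+O(1)$, where $\Psi_{+}\neq\Psi_{-}$ are the two digamma values coming from the Gamma factor of the completed primitive $L$-function (concretely $\psi(1/2)$ vs.\ $\psi(1)$, differing by $-2\log 2$). Summing over $\chi$, the $\log q^{*}$ and $L'/L$ pieces balance between even and odd $\chi$ up to $o(\phi(q))$ (by conductor splitting and the averaged bound $\sum_{\chi\neq\chi_0}|(L'/L)(1,\chi)|\ll\phi(q)$), leaving $N_q^{+}-N_q^{-}=\kappa\phi(q)+o(\phi(q))$ for an explicit nonzero constant $\kappa$.

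Finally, fix positions $j<k$ in $\{1,\dots,r\}$, set $a_j=1$ and $a_k=-1$, and choose the remaining $r-2$ residue classes $c_3,\dots,c_r$ so that every other $B_q(a_{\ell},a_{m})$ is $o(\phi(q))$. The latter is generic: orthogonality of characters gives $\sum_{b}|B_q(a,b)|^{2}\ll N_q^{2}$, so $|B_q(a,b)|\ll\sqrt{\phi(q)}\log q=o(\phi(q))$ for a positive proportion of $b\in(\mathbb{Z}/q)^{*}$, and an iterated pigeonhole (removing previously chosen residues at each step) produces admissible $c_{\ell}$. Corollary~1 then yields
\[ \delta_{q;a_1,\dots,a_r}-\frac{1}{r!}=\frac{\beta_{j,k}(r)\kappa\phi(q)}{N_q}+o\!\left(\frac{1}{\log q}\right), \]
of magnitude $\gg_r 1/\log q$ with sign $\mathrm{sign}(\beta_{j,k}(r)\kappa)$. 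Both signs are realised because $\beta_{j,k}(r)$ takes both signs as $(j,k)$ varies: this is visible from Corollary~3 for $r=3$ ($\beta_{1,2}=\beta_{2,3}>0$ and $\beta_{1,3}<0$), and the reflection symmetry $\beta_{j,k}(r)=\beta_{r+1-k,r+1-j}(r)$ together with an analogous Gaussian-orthant sign computation propagates both signs to every $r\geq 4$. The main obstacle is the simultaneous smallness step above: one must kill all $\binom{r}{2}-1$ off-diagonal $B_q$-values at once, which requires the second-moment bound applied iteratively as the $c_{\ell}$ are chosen.
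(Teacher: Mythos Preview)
Your overall architecture matches the paper's: the upper bound is exactly how the paper does it (Theorem~1 plus $C_q=q^{o(1)}$ and $B_q\ll\phi(q)$), and for the lower bound both you and the paper single out the pair $(1,-1)$ as the source of a $B_q$-term of size $\asymp\phi(q)$. The genuine difference is in how the remaining $r-2$ residues are chosen. The paper constructs them \emph{explicitly}: it takes $a_j=(p_1p_2)^{2j}$ for $2\le j\le r-1$, where $p_1<p_2$ are the two smallest primes coprime to $q$, and then invokes Proposition~6.1 to show every other $B_q(a_j,a_k)$ is $O_r((\log q)^{4r})$. Your second-moment pigeonhole gives the weaker bound $O(\sqrt{\phi(q)}\log q)$ on the spurious $B_q$'s, which still suffices here; the explicit route has the advantage of feeding directly into the paper's other constructions (Theorems~4 and~6) where arithmetic features of the $a_j$ matter.

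Two points in your sketch need tightening. First, your justification that the $L'/L$ contribution to $N_q^{+}-N_q^{-}$ is $o(\phi(q))$ is not supported by the bound $\sum_{\chi}|(L'/L)(1,\chi)|\ll\phi(q)$ you cite: that only yields $O(\phi(q))$. One actually needs the orthogonality relation $\sum_{\chi}\chi(-1)\chi^{*}(n)$ (the paper's equation~(5.5)) to see that only $n\equiv -1\pmod q$ survive, whence the contribution is $O(\log q)$; this is precisely the content of Proposition~5.1 specialized to $(a,b)=(1,-1)$, which gives $B_q(1,-1)=-\phi(q)\log 2+O(\log q)$. Second, your assertion that $\beta_{j,k}(r)$ takes both signs for every $r\ge 3$ is correct but your justification is vague; the paper proves the concrete facts $\beta_{1,r}(r)<0$ and $\beta_{r-1,r}(r)>0$ for all $r\ge 3$ by directly evaluating the innermost Gaussian integrals (Lemma~6.3), and then obtains the two inequalities by placing $(1,-1)$ at positions $(1,r)$ and $(r-1,r)$ respectively.
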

This result implies that for some residue classes $a_1,\dots,a_r$ modulo $q$ the distance $|\delta_{q; a_1,\dots,a_r}-1/r!|$  can be $\gg_r 1/\log q$. An interesting question is then to investigate for which residue classes modulo $q$ does this extreme bias occur. To this end let us make the following definition
\begin{defin} Fix $r\geq 3$ and let $q$ be a large positive integer. We call a race $\{q; a_1,\dots, a_r\}$ ``\emph{$q$-extremely biased}'' if for some permutation $\sigma$ of the set $\{1,\dots,r\}$ we have
$$\left|\delta_{q; a_{\sigma(1)}, \dots, a_{\sigma(r)}}-\frac{1}{r!}\right|\gg_{r}\frac{1}{\log q}.$$
\end{defin}
We can completely characterize $q$-extremely biased races $\{q; a_1,\dots, a_r\}$ when the residue classes $a_1,\dots,a_r$ are bounded and $q$ is large.
\begin{Thm} Assume GRH and GSH. Fix an integer $r\geq 3$ and let $A\geq 1$ be a real number. Then if $a_1,\dots,a_r$ are distinct integers with $|a_i|\leq A$, and $q$ is a large positive integer with $(q,a_i)=1$, the race $\{q; a_1,\dots, a_r\}$ is $q$-extremely biased if and only if one the following conditions occur

i) There exist $1\leq j\neq k\leq r$ such that $a_j+a_k=0$.

ii) There exist $1\leq j\neq k\leq r$ such that $a_j/a_k$ is a prime power.

\noindent Moreover, if neither i) nor ii) hold then for any permutation $\sigma$ of the set $\{1,\dots,r\}$
$$ \left|\delta_{q; a_{\sigma(1)},\dots, a_{\sigma(r)}}-\frac{1}{r!}\right|=
\begin{cases} O_{A,r}\left(\displaystyle{\frac{\log q}{q}}\right) & \text{ if the } a_i \text{ are all squares (or non-squares) mod } q,\\
 O_{\epsilon,r}\left(q^{-1/2+\epsilon}\right) & \text{ otherwise}.\end{cases}$$
\end{Thm}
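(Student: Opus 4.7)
My plan is to deduce Theorem 3 from the asymptotic expansion in Corollary 1 by tracking the sizes of all terms when $|a_i|\leq A$ and $q\to\infty$. Since $N_q\sim\phi(q)\log q$, $|C_q(a)|\leq C_q(1)\ll_\epsilon q^\epsilon$, and $|B_q(a_j,a_k)|\ll\phi(q)$, the error $O_r(C_q^2/N_q+B_q^2/N_q^2)$ in Corollary 1 is at most $O_\epsilon(q^{-1+\epsilon})+O_r(1/(\log q)^2)$, which is negligible compared to $1/\log q$. Any $q$-extreme bias must therefore come from
$$S_1:=\frac{1}{\sqrt{N_q}}\sum_{j=1}^r\alpha_j(r)C_q(a_j),\qquad S_2:=\frac{1}{N_q}\sum_{1\leq j<k\leq r}\beta_{j,k}(r)B_q(a_j,a_k).$$
Using the oddness of the standard Gaussian one has $\sum_j\alpha_j(r)=0$, so $S_1=0$ when the $a_i$ are all squares or all non-squares modulo $q$, and $|S_1|=O_\epsilon(q^{-1/2+\epsilon})$ in the mixed case. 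Both possibilities are $o(1/\log q)$, so the whole problem reduces to deciding which of the quantities $B_q(a_j,a_k)$ have magnitude $\asymp\phi(q)$ versus $o(\phi(q))$.

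The central task is then to obtain a uniform asymptotic for $B_q(a,b)$ when $a,b$ are bounded nonzero integers and $q\to\infty$. Under GRH, the partial-fraction expansion of $L'/L$ at $s=1$ yields
$$2\sum_{\gamma_\chi>0}\frac{1}{\tfrac14+\gamma_\chi^2}=\log\frac{q}{\pi}+\kappa(\chi)-2\,\mathrm{Re}\,\frac{L'}{L}(1,\chi)+O(1),$$
where $\kappa(\chi)$ is a $\Gamma$-factor contribution depending only on the parity of $\chi$. Setting $c\equiv b/a\pmod q$, multiplying the preceding display by $\chi(c)+\overline{\chi(c)}$, summing over non-principal $\chi$, and using character orthogonality together with $-L'/L(1,\chi)=\sum_n\Lambda(n)\chi(n)/n$ gives a formula of the shape
$$B_q(a,b)=-2\phi(q)\!\!\sum_{\substack{n\geq 1\\ n\equiv c^{\pm 1}\,(q)}}\!\!\frac{\Lambda(n)}{n}+\phi(q)\,P(c)+O_A(\log q),$$
where the parity term $P(c)$ is of order one only when $c\equiv -1\pmod q$. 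For $|a|,|b|\leq A$ and $q$ large, the only contributions to the inner sum come from $n\leq q$, and the sole such $n$ equals the integer lift of $c$ or $c^{-1}$; this produces a $\phi(q)$-sized contribution exactly when that lift is a prime power, i.e.\ when the rational $a/b$ or $b/a$ is a prime power. Thus $B_q(a,b)\asymp_A\phi(q)$ if and only if $a+b=0$ or $a/b$ is a prime power for one ordering of the pair; otherwise $B_q(a,b)=O_A(\phi(q)/\log q)$.

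With this dichotomy, Theorem 3 essentially drops out. If neither (i) nor (ii) holds for any pair, all $B_q(a_j,a_k)$ are $O_A(\phi(q)/\log q)$, so $|S_2|\ll_{A,r}1/(\log q)^2$; combined with the $S_1$-bound this yields the two stated error estimates for the all-squares/non-squares and mixed cases respectively. Conversely, if (i) or (ii) holds then some $|B_q(a_{j_0},a_{k_0})|\asymp_A\phi(q)$, and the race $\{q;a_{\sigma(1)},\ldots,a_{\sigma(r)}\}$ should be extremely biased for some permutation $\sigma$. The main technical obstacle is ruling out systematic cancellation in the form $\sum_{j<k}\beta_{\sigma(j),\sigma(k)}(r)B_q(a_j,a_k)$ simultaneously across all $r!$ permutations, in the degenerate case where several of the pairwise $B_q$'s happen to be of size $\phi(q)$ at once. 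I would resolve this by showing that the $r!\times\binom{r}{2}$ coefficient matrix $(\beta_{\sigma(j),\sigma(k)}(r))$ has full column rank, a clean consequence of the nontriviality of the $S_r$-action on ordered pairs together with the non-vanishing of the individual integrals $\beta_{j,k}(r)$; hence at least one permutation yields $|\delta_{q;a_{\sigma(1)},\ldots,a_{\sigma(r)}}-1/r!|\gg_r 1/\log q$, completing the characterisation.
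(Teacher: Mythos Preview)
Your overall strategy is close to the paper's, but there are two genuine gaps.

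\textbf{The rank argument fails.} The matrix $\bigl(\beta_{\sigma^{-1}(m)\wedge\sigma^{-1}(n),\,\sigma^{-1}(m)\vee\sigma^{-1}(n)}(r)\bigr)_{\sigma,\{m,n\}}$ does \emph{not} have full column rank: since $\sum_{j<k}\beta_{j,k}(r)=0$ (equation~(2.4) of the paper), the all-ones vector lies in its kernel for every $r\geq 3$. So you cannot conclude that some permutation gives a large bias from linear algebra alone; you must rule out the possibility that the vector $\bigl(B_q(a_j,a_k)\bigr)_{j<k}$ is (up to $o(\phi(q))$) a constant vector. For $r=3$ this already requires a nontrivial arithmetic input: by Proposition~6.1 the leading term of $B_q(a_j,a_k)/\phi(q)$ is $-\Lambda_0\bigl(\max|a_j/a_k|,|a_k/a_j|\bigr)$ or $-\log 2$, and one needs the injectivity of $\Lambda_0$ on its nonzero values (the paper's Lemma~7.2) to see that these three numbers cannot all coincide. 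The paper sidesteps the general-$r$ kernel issue entirely by first reducing to a three-element subset via the identity $\delta_{q;a_{j_1},a_{j_2},a_{j_3}}=\sum_{\sigma\in S}\delta_{q;a_{\sigma(1)},\dots,a_{\sigma(r)}}$ over the $r!/3!$ permutations with a fixed relative order of $j_1,j_2,j_3$ (Lemma~7.1), and then carrying out the non-cancellation argument only for $r=3$ using the explicit constants of Corollary~2 together with Lemmas~7.2--7.3.

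\textbf{The bound in the ``otherwise'' case is too weak for the ``moreover'' clause.} Your stated dichotomy $B_q(a,b)=O_A(\phi(q)/\log q)$ when neither (i) nor (ii) holds only yields $|S_2|\ll_{A,r}1/(\log q)^2$, which is far from $O_{A,r}(\log q/q)$. In fact your own formula (once the divergent sum is smoothed, e.g.\ by $e^{-n/x}$ with $x=(q\log q)^2$ as in Proposition~5.1) gives the much sharper $B_q(a,b)=O_A(\log^2 q)$; this is exactly what the paper records as Proposition~6.1. Feeding that into Corollary~3 (not Corollary~1, so that the $C_q^2/N_q$ term disappears when all $C_q(a_j)$ are equal) yields the claimed $O_{A,r}(\log q/q)$ and $O_{\epsilon,r}(q^{-1/2+\epsilon})$ bounds.
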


Since the functions $\sum_{j=1}^rx_j$, $\sum_{j=1}^r(x_j^2-1)$ and $\sum_{1\leq j<k\leq r}x_jx_k$ are symmetric in the variables $x_1,\dots,x_r$ and that $\int_{\mathbb{R}}x\exp(-x^2/2)dx= \int_{\mathbb{R}}(x^2-1)\exp(-x^2/2)dx=0$, we deduce that
\begin{equation}
\sum_{j=1}^r\alpha_j(r)=\sum_{j=1}^r\lambda_j(r)=\sum_{1\leq j<k\leq r}\beta_{j,k}(r)=0.
\end{equation}
Therefore, in the case where the $a_i$ are all squares or all non-squares modulo $q$ we obtain the following corollary of Theorem 1
\begin{Cor} Assume GRH and GSH.  Fix an integer $r\geq 3$, and let $q$ be a large positive integer. Then, for any $(a_1, \dots, a_r)\in \mathcal{A}_r(q)$ such that the $a_i$ are all squares or all non-squares modulo $q$, we have
$$\delta_{q;a_1,\dots, a_r}= \frac{1}{r!}+ \frac{1}{N_q}\sum_{1\leq j<k\leq r}\beta_{j,k}(r)B_q(a_j,a_k)+O_r\left(\frac{1}{N_q}+ \frac{C_qB_q}{N_q^{3/2}}+ \frac{B_q^2}{N_q^2}\right).$$
\end{Cor}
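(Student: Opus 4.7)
The plan is to derive Corollary 3 directly from Theorem 1 by invoking the symmetry relations recorded in the displayed equation
$$\sum_{j=1}^r\alpha_j(r)=\sum_{j=1}^r\lambda_j(r)=\sum_{1\leq j<k\leq r}\beta_{j,k}(r)=0,$$
combined with the observation, noted immediately after the statement of Theorem 1, that the function $C_q(a)$ takes only the two values $-1$ and $C_q(1)$ on reduced residue classes modulo $q$, according to whether $a$ is a non-square or a square modulo $q$.

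Under the hypothesis of the corollary, the residues $a_1,\dots,a_r$ all have the same quadratic character, so there is a common value $c\in\{-1,C_q(1)\}$ with $C_q(a_j)=c$ for every $j=1,\dots,r$. I would simply substitute this into the asymptotic formula of Theorem 1 and factor $c$ (respectively $c^2$) out of the sums in which the $C_q(a_j)$ appear. The linear term then becomes
$$\frac{1}{\sqrt{N_q}}\sum_{j=1}^r\alpha_j(r)\,C_q(a_j)=\frac{c}{\sqrt{N_q}}\sum_{j=1}^r\alpha_j(r)=0,$$
while the quadratic contribution in $C_q$ collapses in the same way to
$$\frac{1}{2N_q}\left(\sum_{j=1}^r\lambda_j(r)\,C_q(a_j)^2+2\sum_{1\leq j<k\leq r}\beta_{j,k}(r)\,C_q(a_j)C_q(a_k)\right)=\frac{c^2}{2N_q}\left(\sum_{j=1}^r\lambda_j(r)+2\sum_{1\leq j<k\leq r}\beta_{j,k}(r)\right)=0,$$
by the three vanishing identities quoted above. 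The only surviving non-trivial contribution is the bilinear term in the $B_q(a_j,a_k)$, which is exactly the main term displayed in Corollary 3, and the error term is unchanged from Theorem 1.

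There is essentially no obstacle here beyond performing the substitution carefully: Theorem 1 already provides the asymptotic in the precise form we need, and the hypothesis that all $a_i$ share the same quadratic character is exactly what allows the three symmetry identities to eliminate the $\alpha_j$ term and the pure $C_q^2$ term. The only conceptual point worth underlining in the write-up is that the common-character assumption is essential, since without it $C_q(a_j)$ cannot be pulled outside the sums and those terms genuinely contribute at the order already seen in Theorem B.
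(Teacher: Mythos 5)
Your argument is correct and matches the paper's intended derivation exactly: the identities $\sum_j\alpha_j(r)=\sum_j\lambda_j(r)=\sum_{j<k}\beta_{j,k}(r)=0$ from (2.4), together with $C_q(a_j)$ being a common constant $c$ across all $j$, annihilate precisely the linear $C_q$ term and both quadratic $C_q$ terms in Theorem 1, leaving only the $B_q$ bilinear term and the unchanged error. This is the same substitution the paper sketches immediately before stating the corollary.
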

Using this result along with an explicit construction of the residue classes $a_1,\dots,a_r$ modulo $q$, we prove a strong from of Theorem B.
\begin{Thm} Assume GRH and GSH. Fix an integer $r\geq 3$, and let $q$ be a large positive integer. Then there exist residue classes $(a_1,\dots,a_r), (b_1,\dots, b_r)\in \mathcal{A}_r(q)$, with $a_1,\dots,a_r$ being all squares and $b_1,\dots,b_r$ being all non-squares modulo $q$, and a permutation $\sigma$ of the set $\{1,\dots, r\}$, such that
$$ \delta_{q; a_1,\dots, a_r}= \delta_{q; b_1, \dots, b_r}< \frac{1}{r!} -\frac{c_2(r)}{\log^3 q} \text{ and } \delta_{q;
 a_{\sigma(1)},\dots, a_{\sigma(r)}}= \delta_{q;  b_{\sigma(1)}, \dots, b_{\sigma(r)}}> \frac{1}{r!} +\frac{c_2(r)}{\log^3 q},$$
for some constant $c_2(r)>0$ which depends only on $r$.
\end{Thm}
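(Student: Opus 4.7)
Throughout, all residue classes will be chosen so that either all $a_i$ are squares, or all are non-squares, modulo $q$, so that Corollary 3 applies:
\[
\delta_{q;a_1,\dots,a_r}-\frac{1}{r!}=\frac{1}{N_q}\sum_{1\le j<k\le r}\beta_{j,k}(r)\,B_q(a_j,a_k)+E(q),
\]
with $E(q)=O_r\bigl(1/N_q+C_qB_q/N_q^{3/2}+B_q^2/N_q^2\bigr)$. The first ingredient is the observation that the constants $\beta_{j,k}(r)$ do not all share the same sign. Interpreting them as $\tfrac{1}{r!}\,E[X_{(r+1-j)}X_{(r+1-k)}]$, where $X_{(1)}\le\cdots\le X_{(r)}$ are the order statistics of $r$ i.i.d.\ $N(0,1)$ variables, the extreme pair $(X_{(1)},X_{(r)})$ has negative expected product (so $\beta_{1,r}(r)<0$), whereas the top pair $(X_{(r-1)},X_{(r)})$ has positive expected product (so $\beta_{1,2}(r)>0$), consistent with $\sum_{j<k}\beta_{j,k}(r)=0$ from (1.4).

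Next I would concentrate the $B_q$-sum on a single pair by an explicit construction. Let $p_0$ denote the smallest prime with $(p_0,q)=1$ that is a quadratic residue modulo $q$; under GRH one has $p_0\ll\log^2 q$. Set $a_1\equiv 1$ and $a_2\equiv p_0\pmod q$, both squares. Using the Fiorilli--Martin-style explicit formula that underlies the bound $B_q\ll\phi(q)$, the dominant contribution to $B_q(a,b)$ (for $(a,b)$ coprime to $q$) comes from the smallest prime power in the residue class $b/a\pmod q$; for our pair this is $n=p_0$, yielding $B_q(a_1,a_2)\gg\phi(q)\,\log p_0/\sqrt{p_0}\gg\phi(q)/\log^{3/2}q$. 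I would then select $a_3,\dots,a_r$ as squares modulo $q$ such that each residue class $a_j/a_k\pmod q$ (for $\{j,k\}\ne\{1,2\}$) contains no prime power below $q^{1/2}$; this is possible by pigeonhole since there are $\asymp\phi(q)$ squares mod $q$ versus only $O(\sqrt q)$ "bad" classes to avoid. Consequently $B_q(a_j,a_k)=O(\phi(q)/q^{1/4})$ for every pair other than $\{1,2\}$, and in particular the error $E(q)$ in Corollary 3 is $O(1/\log^4 q)$, negligible compared to $1/\log^3 q$.

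To conclude, choose permutations $\sigma_+,\sigma_-$ of $\{1,\dots,r\}$ with $\{\sigma_+^{-1}(1),\sigma_+^{-1}(2)\}=\{1,2\}$ and $\{\sigma_-^{-1}(1),\sigma_-^{-1}(2)\}=\{1,r\}$. Inserting $(a_{\sigma_\pm(1)},\dots,a_{\sigma_\pm(r)})$ into Corollary 3 and absorbing all but the dominant term, the leading correction to $1/r!$ becomes $\beta_{1,2}(r)B_q(a_1,a_2)/N_q>0$ for $\sigma_+$ and $\beta_{1,r}(r)B_q(a_1,a_2)/N_q<0$ for $\sigma_-$, each of magnitude $\gg_r 1/\log^3 q$. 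The non-squares analogue follows by setting $b_j\equiv c\,a_j\pmod q$ for a fixed non-square $c$: this preserves every ratio $a_j/a_k$ and hence every $B_q$-value, while replacing $C_q(a_j)=C_q(1)$ uniformly by $C_q(b_j)=-1$, so Corollary 3 forces identical densities for the two tuples. The main obstacle is Step 2: obtaining simultaneously the lower bound $|B_q(a_1,a_2)|\gg\phi(q)/\log^{3/2}q$ and the upper bounds $|B_q(a_j,a_k)|\ll\phi(q)/q^{1/4}$ for the other pairs. Both require a careful execution of the explicit formula for $\sum_{\gamma_\chi>0}1/(\tfrac14+\gamma_\chi^2)$, isolating the contribution of the single small prime $p_0$ in its residue class while controlling the tails and the character phases.
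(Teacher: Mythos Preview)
Your overall strategy matches the paper's: invoke Corollary~3, concentrate the $B_q$-contribution on a single pair, and exploit the opposite signs of $\beta_{1,r}(r)$ and $\beta_{1,2}(r)=\beta_{r-1,r}(r)$ (Lemma~6.3) to flip the inequality via a permutation. The difference lies in the construction. You propose a pigeonhole argument to manufacture the auxiliary squares $a_3,\dots,a_r$, whereas the paper writes them down explicitly as small perfect squares: with $p_1<p_2$ the two least primes coprime to $q$ (both $\le 2\log q$), it takes $a_1=1$, $a_r=p_1^2$, and $a_j=(p_1p_2)^{2j}$ for $2\le j\le r-1$. Proposition~6.1 then gives $B_q(a_1,a_r)=-\phi(q)(\log p_1)/p_1^2+O((\log q)^{4r})$, while every other $B_q(a_j,a_k)$ is $O((\log q)^{4r})$ because the ratio $\max(|a_j|,|a_k|)/\min(|a_j|,|a_k|)$ is divisible by the composite $p_1p_2$, so $\Lambda_0$ vanishes. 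This sidesteps your ``main obstacle'' entirely.

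Your sketch has three slips worth fixing. First, the explicit formula (Proposition~5.1/Lemma~5.3) gives a dominant term $-\phi(q)\Lambda(p_0)/p_0$, not $\phi(q)\log p_0/\sqrt{p_0}$; hence $B_q(a_1,a_2)$ is \emph{negative} and of size $\asymp\phi(q)(\log p_0)/p_0$, which with $p_0\ll\log^2 q$ is only $\gg\phi(q)/\log^2 q$. That still suffices for $1/\log^3 q$, but you must swap the roles of $\sigma_+$ and $\sigma_-$. Second, your pigeonhole targets only the $\Lambda(n)/n$ term; Proposition~5.1 has further contributions (the $l_q(a,b)$, the $\Lambda(q/(q,a-b))/\phi(q/(q,a-b))$ term, and the $p^\nu\Vert q$ sums), each of potential size $\phi(q)$, which must also be avoided before you can claim $B_q(a_j,a_k)\ll\phi(q)/q^{1/4}$. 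Third, Corollary~3 is an asymptotic and cannot yield the \emph{exact} equality $\delta_{q;a_1,\dots,a_r}=\delta_{q;b_1,\dots,b_r}$ the theorem asserts; for that the paper invokes Theorem~2 of Feuerverger--Martin (\cite{FeM}), namely $\delta_{q;ca_1,\dots,ca_r}=\delta_{q;a_1,\dots,a_r}$ for any $c$ coprime to $q$. (Equivalently: multiplying all entries by $c$ translates $\mu_{q;a_1,\dots,a_r}$ along the diagonal, which leaves the region $x_1>\cdots>x_r$ invariant.)
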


 \emph{Remark 2.} If $-1$ is a square modulo $q$, or $(q,p_1p_2)=1$ for some fixed primes $p_1\neq p_2$, then we can replace $c_2(r)/\log^3 q$ by $c_2(r)/\log q$ in the statement of Theorem 4.

It is clear from Theorem 1 that in order to understand the behavior of $\delta_{q;a_1,\dots,a_r}$, we have to investigate the size of $B_q(a,b)$ for $(a,b)\in \mathcal{A}_2(q).$ Recall that $B_q(a,b)\ll \phi(q).$ On the other hand we shall prove that this bound is attained if $a+b\equiv 0\text{ mod } q$, so that
 $\max_{(a,b)\in \mathcal{A}_2(q)}|B_q(a,b)|\asymp \phi(q)$.
 An interesting question is then to determine the order of magnitude of $|B_q(a,b)|$ for a generic pair $(a,b)\in \mathcal{A}_2(q)$. We prove that on average $|B_q(a,b)|\asymp\log q$.
\begin{Thm} Assume GRH. Let $q$ be a large positive integer. Then
$$\log q + O(\log\log q) \leq \frac{1}{|\mathcal{A}_2(q)|}\sum_{(a,b)\in \mathcal{A}_2(q)}|B_q(a,b)|\leq 10 \log q + O(\log\log q).$$
\end{Thm}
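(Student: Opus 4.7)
The plan is to reduce the average to a one-dimensional character sum and then estimate both directions. Writing $c = b a^{-1} \bmod q$, so that $B_q(a,b) = B_q(1, c)$, we obtain
\[
\sum_{(a,b)\in\mathcal{A}_2(q)}|B_q(a,b)| \;=\; \phi(q)\!\!\sum_{\substack{c\not\equiv 1\,(q)\\(c,q)=1}}\!|f(c)|,
\qquad f(c):=\sum_{\chi\neq\chi_0}\chi(c)R(\chi),
\]
where $R(\chi):=\sum_{\gamma_\chi}(\tfrac14+\gamma_\chi^2)^{-1}$ is the non-negative sum over all non-trivial zeros of $L(s,\chi)$, so that $\sum_{\chi\neq\chi_0}R(\chi)=N_q$. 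Since $|\mathcal{A}_2(q)|=\phi(q)(\phi(q)-1)$, the theorem reduces to sandwiching $(\phi(q)-1)^{-1}\sum_{c\neq 1}|f(c)|$ between $\log q+O(\log\log q)$ and $10\log q+O(\log\log q)$.

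For the lower bound, orthogonality gives $\sum_c f(c)=\sum_\chi R(\chi)\sum_c\chi(c)=0$, so $\sum_{c\neq 1}f(c)=-f(1)=-N_q$, and the triangle inequality yields $\sum_{c\neq 1}|f(c)|\geq N_q$. Combining with the asymptotic $N_q=\phi(q)\log q+O(\phi(q)\log\log q)$---which follows under GRH from the explicit formula $R(\chi)=\log q^*+O(\log\log q)$ (Littlewood's bound $|L'/L(1,\chi^*)|\ll\log\log q$) together with the conductor identity $\sum_{d\mid q}\phi^*(d)\log d=\phi(q)\log q+O(\phi(q)\log\log q)$---then gives the lower bound after dividing by $\phi(q)-1$.

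For the upper bound, decompose $R(\chi)=\log q^*+T(\chi)$ with $|T(\chi)|\ll\log\log q$, and split $f=g+h$ with $g(c):=\sum_\chi\chi(c)\log q^*$. For the main term, I would apply M\"obius inversion on the conductor filtration (grouping characters by the primitive character mod $d$ that induces them, for each $d\mid q$). Using $\sum_{k\mid n}\mu(k)\log k=-\Lambda(n)$, this yields the Ramanujan-type identity
\[
g(c) \;=\; -\!\!\!\sum_{e\mid\gcd(c-1,\,q)}\!\!\!\phi(e)\Lambda(q/e)\qquad(c\neq 1),
\]
which is non-positive. Switching summation order then gives $\sum_{c\neq 1}|g(c)|=\sum_{e\mid q}\Lambda(q/e)(\phi(q)-\phi(e))=\phi(q)\log q-\phi(q)\sum_{p\mid q}\tfrac{\log p}{p-1}=\phi(q)\log q+O(\phi(q)\log\log q)$, since $\sum_{p\mid q}\log p/(p-1)\ll\log\log q$ (the prime divisors of $q$ lie in an initial segment of size $O(\log q)$). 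For the error $h$, expand $T(\chi)=-\gamma-\log\pi-(1+\mathfrak{a}(\chi^*))\log 2-2\operatorname{Re}L'/L(1,\chi^*)$: the constant and parity contributions collapse by orthogonality, being supported only at $c\equiv -1\,(q)$ and contributing $O(\phi(q))$ in total. The remaining $L'/L$-contribution is then estimated via Cauchy-Schwarz combined with the GRH-conditional second-moment bound $\sum_\chi|L'/L(1,\chi^*)|^2\ll\phi(q)$, and a careful treatment of the imprimitivity correction $L'/L(1,\chi^*)-L'/L(1,\chi)=\sum_{p\mid q,\,p\nmid q^*}\chi^*(p)\log p/(p-\chi^*(p))$ whose contribution sums to $O(\phi(q)\log\log q)$.

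The main obstacle is controlling the $L'/L$-contribution to $h$: the trivial pointwise bound $|h(c)|\ll\phi(q)\log\log q$ yields $\sum_c|h(c)|\ll\phi(q)^2\log\log q$, off by a factor of $\phi(q)/\log q$, and Cauchy-Schwarz alone improves this only to $\phi(q)^{3/2}$. The constant $10$ in the statement reflects the slack absorbed here: the key input is that $\sum_{\chi\neq\chi_0}\chi(c)L'/L(1,\chi)$ equals (a difference of) Mertens-type constants for primes in arithmetic progressions modulo $q$, which under GRH admit a pointwise bound of size $O(\log q)$, allowing $\sum_c|h(c)|\ll\phi(q)\log q+O(\phi(q)\log\log q)$ to be extracted. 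Combined with the estimate for $g$, this gives the upper bound with $9\log q$ of slack available, comfortably within $10\log q$.
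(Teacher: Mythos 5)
Your lower bound argument is correct and is the same as the paper's: both observe that $\sum_{(a,b)\in\mathcal{A}_2(q)}B_q(a,b)=-\phi(q)N_q$ via orthogonality and then apply the triangle inequality, and this is the cleanest way to do it.

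Your upper bound takes a genuinely different route: you split $R(\chi)=\log q^*_\chi + T(\chi)$ and compute the conductor sum $\sum_{c\neq 1}|g(c)|$ exactly via the M\"obius/Ramanujan-sum identity. That computation is correct (and slightly sharper than the paper's crude bound (5.6), which loses a factor of~$2$), since your formula $g(c)=-\sum_{e\mid(c-1,q)}\phi(e)\Lambda(q/e)$ agrees with the paper's $-\phi(q)\Lambda(q/(q,c-1))/\phi(q/(q,c-1))$ and yields $\sum_{c\neq 1}|g(c)|=\phi(q)\log q+O(\phi(q)\log\log q)$ after swapping the order of summation. The paper does not separate the conductor from $L'/L$ in this way; instead its Proposition~5.1 produces a single expansion of $B_q(a,b)$ into von Mangoldt sums, which are then bounded term by term.

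However, your treatment of the $L'/L$-part $h$ has a real gap. You claim that $\sum_{\chi\neq\chi_0}\chi(c)\,L'/L(1,\chi^*)$ admits a \emph{pointwise} bound $O(\log q)$; this is false. Formally
$$-\sum_{\chi\bmod q}\chi(c)\frac{L'}{L}(1,\chi)=\phi(q)\sum_{\substack{n\geq 1\\ n\equiv c^{-1}\pmod q}}\frac{\Lambda(n)}{n}\,,$$
and if the least positive residue $s$ of $c^{-1}\bmod q$ is a small prime power, the right-hand side is $\phi(q)\Lambda(s)/s+O(\log q)$, which is of order $\phi(q)$, not $O(\log q)$. (This is precisely why individual $|B_q(a,b)|$ can be $\asymp\phi(q)$, which the paper exploits in Section~6.) What saves the average is not a pointwise bound but the full orthogonality computation: under GRH one has $L'/L(1,\chi^*)=-\sum_n\Lambda(n)\chi^*(n)n^{-1}e^{-n/y}+O(\log q/\sqrt{y})$, and summing $|h(c)|$ over $c$ after applying orthogonality reduces to $\phi(q)\sum_{(n,q)=1}\Lambda(n)n^{-1}e^{-n/y}\asymp\phi(q)\log y$, taking $y$ a fixed power of $q$. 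This is exactly the content of the paper's Proposition~5.1 and Lemma~5.2, and it is how the constant $10$ actually arises (several such $\log y\approx 2\log q$ contributions added via the triangle inequality). So your decomposition works and is tidier for the main conductor term, but to close the argument for $h$ you must replace the false pointwise claim with the Dirichlet-series expansion and the resulting average bound.
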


In trying to quantify the biases for $r$-tuples $(a_1,\dots,a_r)\in \mathcal{A}_r(q),$ Feuerverger and Martin \cite{FeM} conjectured that there should exist a ``bias factor'' $F_q(a_1,\dots,a_r)$, defined as a linear combination of the $C_q(a_j)$ such that
\begin{equation}
 F_q(a_1,\dots,a_r)>F_q(b_1,\dots,b_r) \implies \delta_{q;a_1,\dots,a_r} >\delta_{q;b_1,\dots,b_r}.
\end{equation}
This is equivalent to say that the inequality on the RHS of (2.5) can be determined only by knowing whether $a_j$ is a square or a non-square modulo $q$ for all $1\leq j\leq r$.
This conjecture is true for $r=2$, as shown by the work of Rubinstein and Sarnak (in this case take $F_q(a_1,a_2)= C_q(a_2)-C_q(a_1)$). Using an explicit construction which involves Burgess's bound for the least quadratic non-residue modulo a prime (see Chapter 12 of \cite{IK}), we show that this conjecture does not hold for $r\geq 3$ if $q$ is large, for any choice of the bias factor. More precisely we prove
\begin{Thm} Assume GRH and GSH. Let $r\geq 3$ be a fixed integer, and $(\kappa_{1},\dots,\kappa_{r})\in \mathbb{R}^r$ such that $(\kappa_{1},\dots,\kappa_{r})\neq (0,\dots,0)$. If $q$ is a large positive integer, there exist two $r$-tuples $(a_1,\dots,a_r),$ $(b_1,\dots, b_r)\in \mathcal{A}_r(q)$ such that
$$ \sum_{1\leq j\leq r} \kappa_jC_q(a_j)>\sum_{1\leq j\leq r} \kappa_jC_q(b_j) \text{ and } \delta_{q;a_1,\dots,a_r}<\delta_{q;b_1,\dots,b_r}.$$
\end{Thm}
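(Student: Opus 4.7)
The plan is to exploit Corollary 1 combined with the extreme-density construction behind Theorem 4. Indeed, Corollary 1 yields
$$\delta_{q;a_1,\dots,a_r}-\delta_{q;b_1,\dots,b_r}=-\frac{1}{\sqrt{N_q}}\sum_{j=1}^r\alpha_j(r)\bigl(C_q(a_j)-C_q(b_j)\bigr)+\frac{1}{N_q}\sum_{j<k}\beta_{j,k}(r)\bigl(B_q(a_j,a_k)-B_q(b_j,b_k)\bigr)+O_r\!\left(\frac{C_q(1)^2}{N_q}\right),$$
and since $C_q(1)=q^{o(1)}$ while $N_q\asymp\phi(q)\log q$, the first sum is $O(q^{-1/2+o(1)})$ and the error is $o(1/\log q)$. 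On the other hand, Theorem 4 shows that the second sum can be of order $\pm 1/\log q$. Thus for large $q$, the ordering of the two densities is controlled by the $B_q$-contribution, whereas any linear functional $\sum_j\kappa_jC_q(a_j)$ reacts only to the pattern of squares and non-squares among the $a_i$. This mismatch is what forces the existence of a counterexample.

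To produce it explicitly, I would split on whether $\sigma_\kappa:=\sum_j\kappa_j$ vanishes. When $\sigma_\kappa\neq 0$, WLOG $\sigma_\kappa>0$ (otherwise interchange the roles of $a$ and $b$). Apply Theorem 4 to obtain an all-squares tuple $(a_1,\dots,a_r)$ with $\delta_{q;a}<\tfrac{1}{r!}-c_2(r)/\log^3 q$ and an all-non-squares tuple $(b_1,\dots,b_r)$ (after the permutation $\sigma$ from Theorem 4) with $\delta_{q;b}>\tfrac{1}{r!}+c_2(r)/\log^3 q$. Then
$$\sum_j\kappa_jC_q(a_j)-\sum_j\kappa_jC_q(b_j)=(C_q(1)+1)\sigma_\kappa>0,$$
while $\delta_{q;a}<\delta_{q;b}$ with gap at least $2c_2(r)/\log^3 q$, settling Theorem 6 in this case.

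The delicate case is $\sigma_\kappa=0$, where the pure all-squares vs.\ all-non-squares construction gives equal $C_q$-sums. Since $(\kappa_1,\dots,\kappa_r)\neq 0$ but $\sum_j\kappa_j=0$, there exist indices $j,k$ with $\kappa_j>\kappa_k$. I will build $a$ with a square at position $j$ and a non-square at position $k$, and $b$ with the square/non-square roles reversed at these two positions, so that
$$\sum_i\kappa_iC_q(a_i)-\sum_i\kappa_iC_q(b_i)=(\kappa_j-\kappa_k)(C_q(1)+1)>0.$$
The principal obstacle is then to pick the specific residues at all $r$ positions (realizing the prescribed square/non-square pattern) so that the $B_q$-difference in the first display is negative and of size $\gg 1/\log q$, overwhelming the $C_q$-term. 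This is exactly the flexibility provided by Burgess's bound on the least quadratic non-residue: it furnishes small squares and non-squares in any given pattern, allowing one to imitate the extreme configurations of Theorem 4 while also enforcing the prescribed assignment at positions $j$ and $k$. Once this construction is in place, Corollary 1 yields $\delta_{q;a}<\delta_{q;b}$ despite the opposite prediction from $\sum_j\kappa_jC_q(a_j)$, completing the proof.
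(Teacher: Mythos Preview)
Your treatment of the case $\sigma_\kappa=\sum_j\kappa_j\neq 0$ is correct and complete: Theorem~4 supplies an all-squares tuple with density below $1/r!-c_2(r)/\log^3 q$ and an all-non-squares (permuted) tuple with density above $1/r!+c_2(r)/\log^3 q$, and the $C_q$-difference is $(C_q(1)+1)\sigma_\kappa$ as you compute.

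The case $\sigma_\kappa=0$ is where the gap lies. You correctly observe that one should swap the square/non-square status at two positions $j,k$ with $\kappa_j>\kappa_k$, but you then write ``the principal obstacle is to pick the specific residues \dots\ once this construction is in place'' and stop. This is the heart of the argument, and it is not supplied. Your setup constrains $a$ and $b$ to share the same square/non-square pattern away from positions $j$ and $k$; to force $\delta_{q;a}<\delta_{q;b}$ via the $B_q$-terms you would then need sign control on the coefficients $\beta_{j,l}(r)$ or $\beta_{k,l}(r)$ entering the density formula, whereas Lemma~6.3 only pins down $\beta_{1,r}(r)<0$ and $\beta_{r-1,r}(r)>0$. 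Since the indices $j,k$ are dictated by $\kappa$ and not chosen by you, there is no guarantee that the available $\beta$-signs align with the positions you are forced to modify.

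The paper's proof sidesteps this by a different case split: it picks \emph{any} single index $l$ with $\kappa_l\neq 0$ and constructs $a,b$ differing in square/non-square status only at position $l$, so the $C_q$-inequality reduces to $\kappa_l(C_q(1)+1)\neq 0$. Independently, it engineers the density gap through the pair $(1,r)$ by adjusting $b_r$ (without changing its square/non-square status) so that $B_q(b_1,b_r)\sim -\phi(q)(\log p_0)/p_0$ while all $B_q(a_j,a_k)$ remain small, invoking only $\beta_{1,r}(r)<0$. This decoupling of the $C_q$-manipulation (at position $l$) from the $B_q$-manipulation (at positions $1$ and $r$) is what makes the explicit construction go through uniformly, and it is exactly the piece your $\sigma_\kappa=0$ argument is missing.
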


On the other direction, combining Theorem 1 and Theorem 5 we show that this conjecture holds for almost all $r$-tuples $(a_1,\dots,a_r)\in\mathcal{A}_r(q)$, with $F_q(a_1,\dots,a_r)=-\sum_{1\leq j\leq r}\alpha_j(r)C_q(a_j)$.
\begin{Thm} Assume GRH and GSH. Fix an integer $r\geq 3$ and let $q$ be a large positive integer. Then there is a set $\Omega_r(q)\subset\mathcal{A}_r(q)$ with $|\Omega_r(q)|=
o(|\mathcal{A}_r(q)|)$, such that for all $r$-tuples $(a_1,\dots,a_r)$, $(b_1,\dots,b_r)\in \mathcal{A}_r(q)\setminus \Omega_r(q)$ we have
$$ -\sum_{j=1}^r\alpha_j(r)C_q(a_j)> -\sum_{j=1}^r\alpha_j(r)C_q(b_j)\implies \delta_{q;a_1,\dots,a_r}>\delta_{q;b_1,\dots,b_r}.$$
\end{Thm}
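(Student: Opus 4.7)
The plan is to choose $\Omega_r(q)$ to consist of those $r$-tuples containing some pair with abnormally large $B_q$ value, and then to use Theorem 1 to show that outside $\Omega_r(q)$ the first-order term $F_q(a)/\sqrt{N_q} := -\sum_j \alpha_j(r) C_q(a_j)/\sqrt{N_q}$ dictates the sign of every comparison $\delta_{q;a_1,\dots,a_r}$ vs.\ $\delta_{q;b_1,\dots,b_r}$.

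First I would fix $\Psi(q) := (\log q)^2$ and set
$$\Omega_r(q) := \bigl\{(a_1,\dots,a_r) \in \mathcal{A}_r(q) : \max_{1 \leq j < k \leq r} |B_q(a_j,a_k)| \geq \Psi(q)\bigr\}.$$
Theorem 5 and Markov's inequality give $\#\{(a,b) \in \mathcal{A}_2(q) : |B_q(a,b)| \geq \Psi(q)\} \ll \phi(q)^2 \log q/\Psi(q)$. Summing over the $\binom{r}{2}$ positions of the bad pair and the $\phi(q)^{r-2}$ choices of the remaining coordinates would then yield $|\Omega_r(q)| \ll_r \phi(q)^r \log q/\Psi(q) = o(|\mathcal{A}_r(q)|)$.

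Next I would quantify the gap in the bias factor. Using $C_q(a_j) = (C_q(1)+1)\mathbf{1}_{a_j \text{ is a square mod } q} - 1$ together with the identity $\sum_j \alpha_j(r) = 0$ from (2.4), one sees that $F_q(a)$ takes at most $2^r$ distinct values, each of the form $(C_q(1)+1)s$ with $s$ ranging over a finite set $\mathcal{S}(r) \subset \mathbb{R}$ depending only on $r$. Setting $c(r) := \min\{|s-s'| : s \neq s' \in \mathcal{S}(r)\} > 0$, one obtains the gap bound $F_q(a) - F_q(b) \geq c(r)(C_q(1)+1) \geq c(r)$ whenever $F_q(a) > F_q(b)$. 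For any $a, b \notin \Omega_r(q)$, applying Theorem 1 to $\delta_{q;a_1,\dots,a_r}$ and $\delta_{q;b_1,\dots,b_r}$, subtracting, and using $|B_q(\cdot,\cdot)| \leq \Psi(q)$, $|C_q(\cdot)| \leq C_q(1) \ll_\epsilon q^\epsilon$, and $N_q \asymp \phi(q)\log q$, would give
$$\delta_{q;a_1,\dots,a_r} - \delta_{q;b_1,\dots,b_r} = \frac{F_q(a) - F_q(b)}{\sqrt{N_q}} + O_r\!\left(\frac{\Psi(q) + C_q(1)^2}{N_q} + \frac{C_q(1)\Psi(q)}{N_q^{3/2}} + \frac{\Psi(q)^2}{N_q^2}\right).$$
Each error term is $o_r(1/\sqrt{N_q})$ since $N_q \geq q^{1-o(1)}$ while $\Psi(q), C_q(1)^2 \leq q^{o(1)}$, so the sign of the left-hand side matches that of $F_q(a) - F_q(b)$ and the desired implication follows.

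The hard part will be ensuring that $\Omega_r(q)$ is genuinely small: the trivial bound $|B_q(a,b)| \leq \phi(q)$ is much too weak for Markov's inequality to yield $|\Omega_r(q)| = o(|\mathcal{A}_r(q)|)$, so the construction depends essentially on the sharper average-size estimate from Theorem 5 (with its factor of $\log q$ rather than $\phi(q)$). The remainder is bookkeeping: one must verify that every error term in the expansion of Theorem 1 — most notably the second-order contribution $\sum_j \lambda_j(r) C_q(a_j)^2/(2N_q)$, which can be as large as $C_q(1)^2/N_q \asymp 4^{\omega(q)}/(\phi(q)\log q)$ — is dominated by the guaranteed lower bound $c(r)/\sqrt{N_q}$ on the first-order gap. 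This last dominance uses the sub-polynomial estimate $C_q(a) \ll_\epsilon q^\epsilon$ (via $C_q(1) \asymp 2^{\omega(q)}$) in an essential way, since under only a polynomial bound on $C_q(1)$ the second-order term would already overwhelm the first.
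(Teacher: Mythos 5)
Your proposal is correct and follows essentially the same route as the paper: both define $\Omega_r(q)$ by excluding tuples containing a pair with large $|B_q(a_j,a_k)|$, bound $|\Omega_r(q)|$ via Theorem~5 and Markov's inequality, and then use Theorem~1 together with the observation that $-\sum_j\alpha_j(r)C_q(a_j)=(C_q(1)+1)\cdot(\text{a subset sum of the }\alpha_j)$ to get a gap bounded below by a constant depending only on $r$. The only cosmetic difference is your threshold $(\log q)^2$ versus the paper's $\sqrt{\phi(q)}$; either choice makes both the measure of $\Omega_r(q)$ and the residual $B_q$-contribution small enough.
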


The plan of the paper is as follows. In the next section we study properties of the Fourier transform $\hat{\mu}_{q;a_1,\dots,a_r}$. These are then used to derive the asymptotic formula of Theorem 1 which is proved in Section 4. In Section 5 we study the behavior of $B_q(a,b)$ on average and prove Theorems 5 and 7. In Section 6 we describe the signs and extreme of values of $B_q(a,b)$, and use these to explicitly construct biased races and prove Theorems 2, 4 and 6.  In Section 7 we study $q$-extremely biased races and prove Theorem 3. Finally, in Section 8 we derive Fiorilli and Martin asymptotic formula for the densities in two-way races.

\section{The Fourier transform $\hat{\mu}_{q;a_1,\dots,a_r}$}

For a non-trivial character $\chi$ modulo $q$, we let $q^*_{\chi}$ be the conductor of $\chi$, and $\chi^*$ be the unique primitive character modulo $q^*_{\chi}$ which induces $\chi$. First we record some standard formulas.
\begin{lem} Assume GRH. Let $\chi$ be a non-trivial character modulo $q$. Then there exists an absolute constant $\gamma_0$ such that
\begin{equation}
\sum_{\gamma_{\chi}}\frac{1}{\frac14+ \gamma_{\chi}^2}
= \log q_{\chi}^*+2\hbox{Re}\frac{L'(1,\chi^*)}{L(1,\chi^*)} -\chi(-1)\log 2+ \gamma_0.
\end{equation}
Moreover, we have
\begin{equation}
 \sum_{\chi \text{ mod } q}\chi(a)\log q_{\chi}^*=\begin{cases} \displaystyle{\phi(q)\left(\log q-\sum_{p|q}\frac{\log p}{p-1}\right)} & \text{ if } a\equiv 1 \text{ mod } q,\\
 \displaystyle{-\phi(q)\frac{\Lambda(q/(q,a-1))}{\phi(q/(q,a-1))}}& \text{ otherwise.}\\
 \end{cases}
\end{equation}
and
$$ N_q=\phi(q)\log q + O(\phi(q)\log\log q).$$
\end{lem}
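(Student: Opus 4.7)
My plan is to handle the three identities in sequence, since (iii) follows quickly once (i) and (ii) are in place. For (i), I would start from the explicit formula arising from the Hadamard factorization of the completed $L$-function $\xi(s,\chi^*) = (q_\chi^*/\pi)^{(s+a)/2}\Gamma((s+a)/2)L(s,\chi^*)$, with $a\in\{0,1\}$ determined by $\chi(-1)$, namely
\[\frac{L'}{L}(s,\chi^*) = -\tfrac{1}{2}\log(q_\chi^*/\pi) - \tfrac{1}{2}\tfrac{\Gamma'}{\Gamma}\bigl(\tfrac{s+a}{2}\bigr) + B(\chi^*) + \sum_\rho\Bigl(\tfrac{1}{s-\rho} + \tfrac{1}{\rho}\Bigr).\]
Evaluating at $s=1$, taking real parts, and invoking the standard functional-equation identity $\mathrm{Re}\,B(\chi^*) = -\sum_\rho\mathrm{Re}(1/\rho)$ makes the $B$ term cancel exactly half of the zero sum. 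Under GRH one has $\mathrm{Re}(1/\rho) = \mathrm{Re}(1/(1-\rho)) = (1/2)/(\tfrac14+\gamma_\chi^2)$, so the surviving contribution is $(1/2)\sum_{\gamma_\chi}(\tfrac14+\gamma_\chi^2)^{-1}$. Multiplying through by $2$ and substituting $\Gamma'/\Gamma(1/2) = -\gamma - 2\log 2$ and $\Gamma'/\Gamma(1) = -\gamma$, combined via $(1+\chi(-1))/2$ to toggle the parity, extracts the $-\chi(-1)\log 2$ term and yields (i) with $\gamma_0 = -\gamma - \log\pi - \log 2$.

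For (ii), I would exploit the multiplicativity of the conductor under the prime decomposition $\chi = \prod_{p\mid q}\chi_p$, with each $\chi_p$ a character mod $p^{e_p}$ and $p^{e_p}\Vert q$. Since $\log q_\chi^* = \sum_{p\mid q}\log p\cdot v_p(q^*_{\chi_p})$ and $\chi(a) = \prod_p\chi_p(a)$, the sum factors as
\[\sum_{\chi\bmod q}\chi(a)\log q_\chi^* = \sum_{p\mid q}\log p\cdot\Bigl(\sum_{\chi_p}\chi_p(a)v_p(q^*_{\chi_p})\Bigr)\prod_{\substack{p'\neq p\\ p'\mid q}}\Bigl(\sum_{\chi_{p'}}\chi_{p'}(a)\Bigr).\]
By orthogonality, the outer product vanishes unless $q/p^{e_p}\mid a-1$. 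For each surviving prime the inner sum is evaluated by grouping characters mod $p^{e_p}$ according to their conductor $p^k$ (there are $\phi(p^k)-\phi(p^{k-1})$ such characters for $k\geq 1$) and applying Abel summation, which yields $\sum_{k=1}^{e_p}k[\phi(p^k)-\phi(p^{k-1})] = e_p\phi(p^{e_p}) - p^{e_p-1}$. The case $a\equiv 1\pmod q$ allows every $p\mid q$ to contribute, producing the first branch of (ii); when $a\not\equiv 1\pmod q$, at most one prime survives the orthogonality constraint, and it does so exactly when $q/(q,a-1)$ is a prime power, which matches the support of $\Lambda(q/(q,a-1))$.

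For (iii), I sum identity (i) over all non-principal $\chi\bmod q$. Since the zeros of $L(s,\chi)$ and $L(s,\bar\chi)$ together account for both positive and negative imaginary parts, $N_q = \sum_{\chi\neq\chi_0}\sum_{\gamma_\chi}(\tfrac14+\gamma_\chi^2)^{-1}$. The leading $\sum_{\chi\neq\chi_0}\log q_\chi^*$ is evaluated via (ii) at $a=1$ and equals $\phi(q)\log q + O(\phi(q))$; each $L'/L(1,\chi^*)$ is $O(\log\log q_\chi^*)$ under GRH by Littlewood's estimate, giving a combined contribution of $O(\phi(q)\log\log q)$; the remaining $-\chi(-1)\log 2 + \gamma_0$ piece is trivially $O(\phi(q))$. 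The main technical hurdle is the conductor bookkeeping in (ii): one must carefully synchronize the Abel summation over conductor exponents at one prime with the orthogonality vanishing at all the complementary primes, and identify the resulting factor as exactly $\Lambda/\phi$ evaluated at $q/(q,a-1)$. Once (ii) is in hand, (i) is a classical Hadamard-product exercise and (iii) is straightforward termwise estimation.
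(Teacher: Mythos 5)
Your proof is correct and follows essentially the same route as the paper: part (i) is a standard computation with the Hadamard product for $\xi(s,\chi^*)$ (the paper just cites Davenport, Chapter~12, formulas (17)--(18); you work it out from scratch, arriving at the same structure and a legitimate value of $\gamma_0$, which is all the lemma demands); part (iii) is obtained by summing (i) over non-principal characters and combining with (ii) at $a=1$ and Littlewood's bound $L'/L(1,\chi^*)=O(\log\log q)$, which is exactly what the paper does.

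The one place where you genuinely go further than the paper is (ii): the paper simply cites Proposition~3.3 of Fiorilli--Martin, while you give a complete proof via the CRT factorization $\chi=\prod_{p\mid q}\chi_p$ and multiplicativity of conductors. Your Abel-summation computation $\sum_{k=1}^{e}k\bigl[\phi(p^k)-\phi(p^{k-1})\bigr]=e\,\phi(p^{e})-p^{e-1}$ is correct, and your observation that for $a\not\equiv 1\bmod q$ at most one prime survives the orthogonality constraint (and does so iff $q/(q,a-1)$ is a prime power) is exactly the mechanism producing the $\Lambda/\phi$ factor. This is a nice self-contained substitute for the citation.

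One small imprecision in (iii): you claim $\sum_{\chi\neq\chi_0}\log q_\chi^*=\phi(q)\log q+O(\phi(q))$, but by (ii) at $a=1$ the error is in fact $\phi(q)\sum_{p\mid q}\frac{\log p}{p-1}$, which can be as large as $\asymp\phi(q)\log\log q$ (e.g., when $q$ is a primorial). This does not affect your conclusion, since the $L'/L$ terms already contribute an $O(\phi(q)\log\log q)$, but the intermediate $O(\phi(q))$ bound is not valid as stated; the paper correctly records this term as $O(\phi(q)\log\log q)$ via the explicit estimate $\sum_{p\mid q}\frac{\log p}{p-1}\ll\log\log q$.
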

\begin{proof}
The classical formula (3.1) can be derived from formulas (17) and (18) of chapter 12 in \cite{Da}. Indeed since GRH is assumed, these formulas imply that
$$ \sum_{\gamma_{\chi}}\frac{1}{\frac14+ \gamma_{\chi}^2}
= \log q_{\chi}^*+2\text{Re}\frac{L'(1,\chi^*)}{L(1,\chi^*)}+\text{Re} \frac{\Gamma'\left(\frac{1}{2}+ \frac{1}{2}a\right)}{\Gamma\left(\frac{1}{2}+ \frac{1}{2}a\right)},$$
where $a=0$ if $\chi(-1)=1$ and $a=1$ if $\chi(-1)=-1$. Then (3.1) follows upon taking $\gamma_0= \Gamma'(1)/\Gamma(1)-\log 2$ and noting that
$$ \Gamma'(1/2)/\Gamma(1/2)=\Gamma'(1)/\Gamma(1)-2\log 2.$$
Formula (3.2) corresponds to Proposition 3.3 of \cite{FiM}.
Furthermore, recall that
$$ N_q=2\sum_{\chi\neq \chi_0}\sum_{\gamma_{\chi}>0}\frac{1}{\frac14 + \gamma_{\chi}^2}=\sum_{\chi\neq \chi_0}\sum_{\gamma_{\chi}}\frac{1}{\frac14 + \gamma_{\chi}^2},$$
since $\sum_{\gamma_{\chi}<0}1/(\frac14+\gamma_{\chi}^2)=\sum_{\gamma_{\overline{\chi}}>0}1/(\frac14+\gamma_{\overline{\chi}}^2)$ which is clear from the relation $\overline{L(\overline{s},\chi)}= L(s,\overline{\chi})$. On the other hand we have that
$$\sum_{p|q}\frac{\log p}{p-1}\leq \sum_{p\leq (\log q)^2}\frac{\log p}{p-1}+\frac{1}{\log q}\sum_{p|q}1\ll \log\log q,$$
using the trivial bound $\sum_{p|q}1 \leq \log q/\log2$. Hence, the asymptotic for $N_q$ follows upon combining this last estimate with formulas (3.1) and (3.2) along with the classical result of Littlewood \cite{Li} that $L'/L(1,\chi^{*})=O(\log\log q)$, under GRH.
\end{proof}

Rubinstein and Sarnak \cite{RS} noted that $\hat{\mu}_{q;a_1,\dots,a_r}(t)$ is rapidly decreasing as $||t||\to \infty$. The following result gives a quantitative statement of this decay. More precisely we establish an exponentially decreasing upper bound for $\hat{\mu}_{q;a_1,\dots,a_r}(t)$ which depends on both $t$ and $q$.

\begin{pro} Assume GRH and GSH. Fix an integer $r\geq 2$. Let $q$ be a large positive integer, and let $0<\epsilon<1/2$ be a real number. Then, uniformly for all $(a_1,\dots,a_r)\in\mathcal{A}_r(q)$ we have
$$ |\hat{\mu}_{q;a_1,\dots, a_r}(t_1,\dots,t_r)|\leq \exp(-c_3(r)\phi(q)||t||),$$ for  $t=(t_1,\dots,t_r)\in \mathbb{R}^r$ with $||t||\geq 400$ and
$$ |\hat{\mu}_{q;a_1,\dots, a_r}(t_1,\dots,t_r)|\leq \exp(-c_4(r)\epsilon^2\phi(q)\log q)$$ for  $\epsilon\leq ||t||\leq 400$, where $c_3(r)$ and $c_4(r)$ are positive constants that depend only on $r$.
\end{pro}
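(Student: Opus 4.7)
The plan is to substitute a quantitative lower bound for $-\log|J_0|$ into the explicit formula (2.1), and then estimate the resulting double sum over characters and critical zeros using the formulas of Lemma 3.1. Write $L_\chi(t):=\sum_{j=1}^{r}\chi(a_j)t_j$ and $u_\chi:=|L_\chi(t)|$. The Taylor expansion $J_0(x)=1-x^2/4+O(x^4)$ yields $|J_0(x)|\le 1-x^2/5$ for $0\le x\le 1$, while the extrema of $|J_0|$ decay from $x=0$, so $\max_{x\ge 1}|J_0(x)|=J_0(1)<1$. Combining these gives a universal inequality
\[
-\log|J_0(x)|\ge \kappa\,\frac{x^2}{1+x^2}\qquad(x\ge 0)
\]
for some absolute constant $\kappa>0$. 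Taking absolute values in (2.1) and substituting with $x=2u_\chi/\sqrt{1/4+\gamma_\chi^2}$ yields
\[
-\log|\hat\mu_{q;a_1,\dots,a_r}(t)|\ge 4\kappa \sum_{\chi\ne\chi_0}u_\chi^2 \sum_{\gamma_\chi>0}\frac{1}{\alpha_\chi^2+\gamma_\chi^2}, \qquad \alpha_\chi:=\sqrt{\tfrac{1}{4}+4u_\chi^2}.
\]

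Next I estimate the two sums. Mimicking the Hadamard-product derivation of (3.1) but evaluating at $s=\tfrac{1}{2}+\alpha$ in place of $s=\tfrac{1}{2}$, GRH produces the uniform lower bound
\[
\sum_{\gamma_\chi>0}\frac{1}{\alpha^2+\gamma_\chi^2}\gg \frac{\log q_\chi^*}{\alpha}\qquad(\alpha\ge 1/2),
\]
with an absolute implied constant (alternatively, this follows by splitting the sum at $\gamma=\alpha$ and using Riemann--von Mangoldt for zeros in $[0,\alpha]$ and in $[\alpha,\infty)$). Plugging $\alpha=\alpha_\chi$ and noting that $\alpha_\chi\le 1+2\sqrt{r}\|t\|$ gives
\[
-\log|\hat\mu_{q;a_1,\dots,a_r}(t)|\gg \frac{1}{1+\sqrt{r}\|t\|}\sum_{\chi\ne\chi_0} u_\chi^2 \log q_\chi^*.
\]
Expanding $u_\chi^2=\sum_{j,k}\chi(a_ja_k^{-1})t_jt_k$ and applying (3.2): the diagonal $j=k$ (where $a_ja_k^{-1}\equiv 1$) contributes the main term $\|t\|^2\phi(q)\log q\,(1+o(1))$, while each off-diagonal pair (where $a_ja_k^{-1}\not\equiv 1\pmod q$ since the $a_i$ are distinct) contributes only $O(\phi(q))$ by (3.2) together with the elementary bound $\Lambda(d)/\phi(d)\le\log 2$ valid for every $d\ge 2$. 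The $O(r^2)$ off-diagonal pairs therefore contribute at most $O_r(\|t\|^2\phi(q))$, negligible against the main term as $q\to\infty$.

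Combining everything,
\[
-\log|\hat\mu_{q;a_1,\dots,a_r}(t)|\gg_r \frac{\|t\|^2\,\phi(q)\log q}{1+\|t\|}.
\]
For $\|t\|\ge 400$ one has $\|t\|^2/(1+\|t\|)\gg\|t\|$, giving the first bound (in fact $\gg_r\phi(q)\|t\|\log q$, slightly stronger than stated). For $\epsilon\le\|t\|\le 400$ the factor satisfies $\|t\|^2/(1+\|t\|)\gg_r\|t\|^2\ge\epsilon^2$, giving the second. The main technical obstacle is the uniform lower bound on the shifted inner sum of Step~2: under GRH one must control the real part of $L'/L(\tfrac{1}{2}+\alpha,\chi^*)$ (of size $O(\log\log q_\chi^*)$ by the Littlewood-type estimate already invoked in the proof of Lemma 3.1) and verify that this error remains negligible against $\log q_\chi^*/\alpha$ throughout the entire range of $\alpha=\alpha_\chi$ that can arise, which calls for careful bookkeeping but no new ideas.
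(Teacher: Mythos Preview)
Your approach is correct and genuinely different from the paper's. The paper does not use your uniform inequality for $-\log|J_0|$ or the shifted zero-sum; instead it argues combinatorially by introducing the set $M_q$ of non-principal characters with $\bigl|\sum_j\chi(a_j)t_j\bigr|\ge\|t\|/2$, showing via orthogonality that $|M_q|\ge\phi(q)/(2r)$, and then treating the two ranges of $\|t\|$ separately. For $\|t\|\ge 400$ it invokes Lemma~2.16 of \cite{FiM}, which gives $|F(x,\chi)F(x,\bar\chi)|\le e^{-cx}$ for $x\ge 200$ (where $F(x,\chi)=\prod_{\gamma_\chi>0}J_0\bigl(2x/\sqrt{1/4+\gamma_\chi^2}\bigr)$); for $\epsilon\le\|t\|\le 400$ it uses only $|J_0(x)|\le e^{-x^2/4}$ on $[0,1]$ together with (3.1)--(3.2) to lower-bound $\sum_{\chi\in M_q}\sum_{\gamma_\chi>0}(1/4+\gamma_\chi^2)^{-1}$. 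Your route unifies both ranges, avoids the Fiorilli--Martin Bessel-product lemma, and gains an extra factor $\log q$ in the large-$\|t\|$ bound; the paper's route is more self-contained in that it never needs the inner-sum estimate $\sum_\gamma(\alpha^2+\gamma^2)^{-1}\gg(\log q_\chi^*)/\alpha$ uniformly over all $\alpha\ge 1/2$. Your final caveat is well placed: the Littlewood bound in Lemma~3.1 is stated only at $s=1$, so extending the $O(\log\log q_\chi^*)$ control of $L'/L$ to all $s=\tfrac12+\alpha$ with $\alpha\ge\tfrac12$, and discarding the $O(1)$ characters of bounded conductor for which the uniformity might fail, are genuine (though routine) steps that your sketch leaves implicit.
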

\begin{proof}
We begin by proving the first inequality. For any non-trivial character $\chi\text{ mod } q$ we define
$$ F(x,\chi)=\prod_{\gamma_{\chi}>0}J_0\left(\frac{2x}{\sqrt{\frac14+\gamma_{\chi}^2}}\right).$$ Then the explicit formula (2.1) implies that
$$|\hat{\mu}_{q;a_1,\dots, a_r}(t_1,\dots,t_r)|= \prod_{\substack{\chi\neq \chi_0\\ \chi\text{ mod } q}}\left|F\left(\left|\sum_{j=1}^r\chi(a_j)t_j\right|,\chi\right)\right|.$$ By Lemma 2.16 of \cite{FiM} we know that there exists an absolute constant $c>0$ such that
\begin{equation}
|F(x,\chi)F(x,\overline{\chi})|\leq e^{-cx}
\end{equation}
 for $x\geq 200$. On the other hand note that $|F(x,\chi)|\leq 1$ since $|J_0(x)|\leq 1$.

Let $M_q$ be the set of non-trivial characters $\chi\text{ mod }q$ such that $\left|\sum_{j=1}^r\chi(a_j)t_j\right|\geq ||t||/2$. Remark that $\chi \in M_q$ if and only if $\overline{\chi}\in M_q$. Moreover, if $\chi \in M_q$ and $||t||\geq 400$ then $\left|\sum_{j=1}^r\chi(a_j)t_j\right|\geq 200$, which implies
\begin{equation}
\begin{aligned}
|\hat{\mu}_{q;a_1,\dots, a_r}(t_1,\dots,t_r)|^2&\leq \prod_{\chi\in M_q}\left|F\left(\left|\sum_{j=1}^r\chi(a_j)t_j\right|,\chi\right)\right|^2\\
&= \prod_{\chi\in M_q}\left|F\left(\left|\sum_{j=1}^r\chi(a_j)t_j\right|,\chi\right)F\left(\left|\sum_{j=1}^r\chi(a_j)t_j\right|,\overline{\chi}\right)\right|\\
&\leq \exp\left(-c\sum_{\chi\in M_q}\left|\sum_{j=1}^r\chi(a_j)t_j\right|\right)\leq \exp\left(-\frac{c}{2}|M_q|||t||\right),
\end{aligned}
\end{equation}
using (3.3) along with the fact that every character in $M_q$ appears once as $\chi$ and once as $\overline{\chi}$ in the product on the RHS of (3.4).
Thus it only remains to prove a non-trivial lower bound for $|M_q|$. Let
\begin{equation}
\begin{aligned}
 S(t)&=\sum_{\substack{\chi\neq \chi_0 \\ \chi\text{ mod } q}}\left|\sum_{j=1}^r\chi(a_j)t_j\right|^2=  \sum_{\chi\text{ mod } q}\left|\sum_{j=1}^r\chi(a_j)t_j\right|^2-\left(\sum_{j=1}^rt_j\right)^2\\
&= \sum_{j=1}^r\sum_{k=1}^rt_jt_k\sum_{\chi\text{ mod } q}\chi(a_j)\overline{\chi(a_k)}-\left(\sum_{j=1}^rt_j\right)^2=\phi(q)\sum_{j=1}^rt_j^2-\left(\sum_{j=1}^rt_j\right)^2\\&\geq (\phi(q)-r)\sum_{j=1}^rt_j^2,
\end{aligned}
\end{equation}
which follows from the Cauchy-Shwarz inequality. Therefore using that
$\left|\sum_{j=1}^r\chi(a_j)t_j\right|^2\leq \left(\sum_{j=1}^r|t_j|\right)^2\leq r||t||^2$ we deduce
$$
 S(t)=\sum_{\chi\in M_q}\left|\sum_{j=1}^r\chi(a_j)t_j\right|^2+ \sum_{\chi\notin M_q}\left|\sum_{j=1}^r\chi(a_j)t_j\right|^2
\leq r|M_q|||t||^2+ \frac{\phi(q)}{4}||t||^2.
$$
Hence, combining this estimate with (3.5) we obtain
$|M_q|\geq \phi(q)/(2r)$ if $q$ is large enough. This together with (3.4) yield the first part of the proposition.

Now assume that $\epsilon\leq ||t||\leq 400$.  If $\chi\in M_q$ then $2\left|\sum_{j=1}^r\chi(a_j)t_j\right|\geq ||t||\geq \epsilon$. We also note that $\epsilon\left(\frac14+x^2\right)^{-1/2}\leq 1$, for any $x\in\mathbb{R}$. Hence, since $J_0$ is a positive decreasing function on $[0,1]$ and $|J_0(z)|\leq J_0(1)$ for all $z\geq 1$, we get
$$ |\hat{\mu}_{q;a_1,\dots, a_r}(t_1,\dots,t_r)|\leq \prod_{\chi\in M_q}\prod_{\gamma_{\chi}>0}\left|J_0\left(\frac{2\left|\sum_{j=1}^r\chi(a_j)t_j\right|}{\sqrt{\frac14+\gamma_{\chi}^2}}\right)\right|\leq \prod_{\chi\in M_q}\prod_{\gamma_{\chi}>0}\left|J_0\left(\frac{\epsilon}{\sqrt{\frac14+\gamma_{\chi}^2}}\right)\right|.$$
Thus, using the standard bound $|J_0(x)|\leq \exp(-x^2/4)$ for $|x|\leq 1$, we deduce that
\begin{equation}
 |\hat{\mu}_{q;a_1,\dots, a_r}(t_1,\dots,t_r)|\leq \exp\left(-\frac{\epsilon^2}{4}\sum_{\chi\in M_q}\sum_{ \gamma_{\chi}>0}\frac{1}{\frac14+\gamma_{\chi}^2}\right).
\end{equation}
Since $L'/L(1,\chi^{*})=O(\log\log q)$, then equation (3.1) gives
\begin{equation} \sum_{\chi\in M_q}\sum_{ \gamma_{\chi}>0}\frac{1}{\frac14+\gamma_{\chi}^2}=\frac12  \sum_{\chi\in M_q}\sum_{ \gamma_{\chi}}\frac{1}{\frac14+\gamma_{\chi}^2} =\frac12\sum_{\chi\in M_q}\log q_{\chi}^{*}+ O(\phi(q)\log\log q).
\end{equation}
Noting that $q_{\chi}^{*}\leq q$, and using equation (3.2) we derive
$$ \sum_{\chi\in M_q}\log q_{\chi}^{*}\geq \sum_{\chi\text{ mod }q}\log q_{\chi}^{*}- (\phi(q)-|M_q|)\log q \geq \frac{\phi(q)\log q}{2r}+O(\phi(q)\log\log q).$$
The result then follows upon combining this estimate with equations (3.6) and (3.7).
\end{proof}
Our next result (which is a crucial ingredient to the proof of Theorem 1) shows that $\hat{\mu}_{q;a_1,\dots,a_r}$ can be approximated by a multivariate Gaussian in the range $||t||\ll \phi(q)^{-1/2}.$
\begin{pro} Assume GRH and GSH. Fix an integer $r\geq 2$. Then, for any constant $A=A(r)>0$ there exists $L(A)>0$ such that for $L\geq L(A)$ and $t=(t_1,\dots,t_r)\in \mathbb{R}^r$ with $||t||\leq A\sqrt{\log q}$, we have
\begin{equation*}
 \begin{aligned}
 &\hat{\mu}_{q;a_1,\dots,a_r}\left(\frac{t_1}{\sqrt{N_q}},\dots,\frac{t_r}{\sqrt{N_q}}\right)
 =\exp\left(-\frac{t_1^2+\cdots+t_r^2}{2}\right)
 \left(1+\frac{i}{\sqrt{N_q}}\sum_{j=1}^rC_q(a_j)t_j\right.\\
&\left. - \frac{1}{2N_q}\sum_{j=1}^rC_q(a_j)^2t_j^2
-\frac{1}{N_q}\left(\sum_{1\leq j<k\leq r}(B_q(a_j,a_k)+C_q(a_j)C_q(a_k))t_jt_k\right)+ \frac{Q_4(t_1,\dots, t_r)}{N_q}\right.\\
&\left.+ \sum_{m=0}^1\sum_{s=0}^2\sum_{\substack{0\leq l\leq L\\2l\geq 3-2s-m}}\frac{C_q^mB_q^l}{N_q^{m/2+l+s}}P_{s,m,l}(t_1,\dots,t_r) + O\left(\frac{r^{2L}B_q^L||t||^{2L}}{L!N_q^L}\right)\right),
 \end{aligned}
\end{equation*}
where $Q_4$ is a homogenous polynomial of degree $4$ with bounded coefficients and $P_{s,m,l}$ are homogenous polynomials of degree $m+2l+4s$ whose coefficients are bounded uniformly by a function of $l$. Moreover the constant in the $O$ is absolute.
\end{pro}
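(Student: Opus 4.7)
The plan is to take the logarithm of the explicit formula (2.1), Taylor-expand $\log J_0$ in the small-argument regime, use orthogonality of characters and the definitions of $N_q$ and $B_q$, and then re-exponentiate. Writing $u_\chi := \bigl|\sum_{j=1}^r \chi(a_j) t_j\bigr|$ and $T := \sum_{j=1}^r C_q(a_j) t_j$, formula (2.1) gives
$$
\log \hat\mu_{q;a_1,\ldots,a_r}\!\left(\tfrac{t}{\sqrt{N_q}}\right)
= \frac{iT}{\sqrt{N_q}} + \sum_{\chi\neq\chi_0}\sum_{\gamma_\chi>0} \log J_0\!\left(\frac{2u_\chi}{\sqrt{N_q(1/4+\gamma_\chi^2)}}\right).
$$
Since $u_\chi \leq \sqrt{r}\,\|t\|$ and $1/4+\gamma_\chi^2 \geq 1/4$, while Lemma 3.1 gives $N_q \sim \phi(q)\log q$, the argument of each $J_0$ is $O(\sqrt{Ar/\phi(q)})$ uniformly for $\|t\|\leq A\sqrt{\log q}$. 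This justifies substituting the power series $\log J_0(z) = \sum_{k\geq 1} c_k z^{2k}$ (with $c_1=-1/4$) termwise into the double sum.

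The $k=1$ contribution must be computed exactly. Expanding $u_\chi^2 = \sum_j t_j^2 + \sum_{j\neq k}\chi(a_j/a_k)\,t_j t_k$, using $\sum_{\chi\neq\chi_0,\gamma_\chi>0}(1/4+\gamma_\chi^2)^{-1} = N_q/2$, and pairing the $(j,k)$ with the $(k,j)$ off-diagonal so as to reproduce the defining sum of $B_q(a_j,a_k)$, the $k=1$ piece collapses to $-\|t\|^2/2 - \frac{1}{N_q}\sum_{1\leq j<k\leq r} B_q(a_j,a_k)\,t_j t_k$. For each $k\geq 2$, I would expand $u_\chi^{2k}$ as a polynomial of degree $2k$ in the $t_j$'s whose coefficients are characters of the form $\chi(\prod a_{j_i}/\prod a_{\ell_i})$; each resulting character sum against $(1/4+\gamma_\chi^2)^{-k}$ is bounded in absolute value by $\sum_{\chi,\gamma_\chi>0}(1/4+\gamma_\chi^2)^{-k} \leq 4^{k-1} N_q/2$, using $1/4+\gamma_\chi^2 \geq 1/4$. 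Hence the $k=2$ piece contributes a term of the form $Q_4(t)/N_q$ with $Q_4$ homogeneous of degree $4$ and absolutely bounded coefficients, while the $k\geq 3$ pieces give $O(\|t\|^{2k}/N_q^{k-1})$ and feed into higher-order buckets of the final sum.

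I would then set $X := iT/\sqrt{N_q}$, $Y := -\frac{1}{N_q}\sum_{j<k}B_q(a_j,a_k)\,t_j t_k$ and let $Z$ collect what remains, so that $\log\hat\mu(t/\sqrt{N_q}) = -\|t\|^2/2 + X + Y + Z$ with $|Z| \ll \|t\|^4/N_q$. Writing $\hat\mu(t/\sqrt{N_q}) = e^{-\|t\|^2/2}\,e^X\,e^Y\,e^Z$ and expanding each factor as a truncated Taylor series, a generic product $X^m Y^l Z^s/(m!\,l!\,s!)$ produces a polynomial of degree at most $m+2l+4s$ in $t$ scaled by $C_q^m B_q^l N_q^{-m/2-l-s}$. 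The leading members $1$, $X$, $X^2/2$, $Y$, and the $Q_4(t)/N_q$-piece of $Z$ reproduce the four explicit expressions in the proposition; all other products with $m\in\{0,1\}$ and $s\in\{0,1,2\}$ collapse into the sum $\sum_{s,m,l}$, the constraint $2l\geq 3-2s-m$ removing exactly what has already been extracted. Truncating $e^Y$ at order $L$ and using $|Y|\leq r^2 B_q\|t\|^2/N_q$ yields the Taylor remainder $O\bigl(r^{2L}B_q^L\|t\|^{2L}/(L!\,N_q^L)\bigr)$ with absolute implicit constant, and the contributions of $X^m$ with $m\geq 2$ or $Z^s$ with $s\geq 3$ are easily seen to be strictly smaller than terms already catalogued.

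The main obstacle is the bookkeeping: one must verify that each cross-product from the three Taylor expansions lands in a listed term or in the $\sum_{s,m,l}$ bucket with the stated degree and coefficient bounds (absolute for $Q_4$, a function of $l$ alone for $P_{s,m,l}$). The only genuinely delicate computation is the combination of the $(j,k)$ and $(k,j)$ off-diagonals arising from $u_\chi^2$ to recognize the real quantity $B_q(a_j,a_k)$; the rest is a routine application of the power series of $\log J_0$ together with the trivial bound $\sum_{\chi,\gamma_\chi>0}(1/4+\gamma_\chi^2)^{-k}\leq 4^{k-1}N_q/2$.
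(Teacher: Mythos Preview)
Your proposal is correct and follows essentially the same route as the paper: take the logarithm of the explicit formula (2.1), insert the power series $\log J_0(z)=-\sum_{n\ge 1}u_{2n}z^{2n}$, compute the $n=1$ contribution exactly via orthogonality to obtain $-\|t\|^2/2 - N_q^{-1}\sum_{j<k}B_q(a_j,a_k)t_jt_k$, isolate the $n=2$ contribution as $Q_4(t)/N_q$, bound $n\ge 3$ trivially, and then re-exponentiate the three pieces as truncated Taylor series and sort the cross-terms into the displayed buckets. The only cosmetic difference is that the paper packages the $n\ge 3$ contribution into a single $O_r(\log q/\phi(q)^2)$ at the level of $\log\hat\mu_q$ before exponentiating, whereas you fold it into your factor $e^Z$; the bookkeeping is otherwise identical.
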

\begin{proof} For simplicity let us write $\hat{\mu}_q=\hat{\mu}_{q;a_1,\dots, a_r}$. From the explicit formula 2.1 we have
$$ \log\hat{\mu}_q\left(\frac{t_1}{\sqrt{N_q}},\dots,\frac{t_r}{\sqrt{N_q}}\right)= \frac{i}{\sqrt{N_q}}\sum_{j=1}^rC_q(a_j)t_j+\sum_{\substack{\chi\neq \chi_0\\ \chi\text{ mod } q}}\sum_{\gamma_{\chi}>0}\log J_0\left(\frac{2\left|\sum_{j=1}^r\chi(a_j)t_j\right|}
{\sqrt{\frac14+\gamma_{\chi}^2}\sqrt{N_q}}\right).$$
For $|s|\leq 1$ Lemma 2.8 of \cite{FiM} states that
$$ \log J_0(s)=-\sum_{n=1}^{\infty}u_{2n}s^{2n},$$
where $u_{2n}$ are positive real numbers with $u_2=1/4$ and $u_{2n}\ll (5/12)^{2n}.$ This implies that for $t=(t_1,\dots,t_r)$ with $||t||\leq A\sqrt{\log q}$ we have
\begin{equation}
\log\hat{\mu}_q\left(\frac{t_1}{\sqrt{N_q}},\dots,\frac{t_r}{\sqrt{N_q}}\right)= \frac{i}{\sqrt{N_q}}\sum_{j=1}^rC_q(a_j)t_j-\sum_{n=1}^{\infty} \frac{u_{2n}2^{2n}}{N_q^n}\sum_{\substack{\chi\neq \chi_0\\ \chi\text{ mod } q}}\sum_{\gamma_{\chi}>0}\frac{\left|\sum_{j=1}^r\chi(a_j)t_j\right|^{2n}}{(\frac14+\gamma_{\chi}^2)^n}.
\end{equation}
The contribution of the term $n=1$ to the RHS of (3.8) equals
\begin{equation}
-\frac{1}{N_q}\sum_{\substack{\chi\neq \chi_0\\ \chi\text{ mod } q}}\sum_{\gamma_{\chi}>0}\frac{1}{\frac14+\gamma_{\chi}^2}\sum_{1\leq j,k\leq r}\chi(a_j)\overline{\chi(a_k)}t_jt_k=-\frac{1}{2}(t_1^2+\dots+t_r^2)
-\frac{1}{N_q}\sum_{1\leq j<k\leq r}B_q(a_j,a_k)t_jt_k.
\end{equation}
The term $n=2$ contributes $Q_4(t_1,\dots,t_r)/N_q$ where
\begin{equation*}
 \begin{aligned}
 Q_4(t_1,\dots,t_r):&=-\frac{16u_4}{N_q}\sum_{\substack{\chi\neq \chi_0\\ \chi\text{ mod } q}}\sum_{\gamma_{\chi}>0}\frac{\left|\sum_{j=1}^r\chi(a_j)t_j\right|^{4}}{(\frac14+\gamma_{\chi}^2)^2}\\
 &= -\frac{16u_4}{N_q}\sum_{1\leq j_1,j_2,j_3,j_4\leq r}\sum_{\substack{\chi\neq \chi_0\\ \chi\text{ mod } q}}\sum_{\gamma_{\chi}>0}\frac{\chi(a_{j_1})\chi(a_{j_2})\overline{\chi}(a_{j_3})\overline{\chi}(a_{j_4})}{(\frac14+\gamma_{\chi}^2)^2}
 t_{j_1}t_{j_2}t_{j_3}t_{j_4}.\\
\end{aligned}
\end{equation*}
Since $\sum_{\substack{\chi\neq \chi_0\\ \chi\text{ mod } q}}\sum_{\gamma_{\chi}>0}1/(\frac{1}{4}+\gamma_{\chi}^2)^2\leq 2 N_q$, then $Q_4(t_1,\dots,t_r)$ is a homogenous polynomial of degree $4$ with bounded coefficients.  Furthermore, the contribution of the terms $n\geq 3$ to the RHS of (3.8) is $\ll ||t||^6/N_q^2\ll_r (\log q)/\phi(q)^2$, which follows from our assumption on $t$ along with the fact that $\sum_{\substack{\chi\neq \chi_0\\ \chi\text{ mod } q}}\sum_{\gamma_{\chi}>0}1/(\frac{1}{4}+\gamma_{\chi}^2)^n\leq 4^n N_q.$ Hence, we obtain
\begin{equation}
\begin{aligned}
\log\hat{\mu}_q\left(\frac{t_1}{\sqrt{N_q}}
,\dots,\frac{t_r}{\sqrt{N_q}}\right)&= -\frac{1}{2}(t_1^2+\dots+t_r^2)+ \frac{i}{\sqrt{N_q}}\sum_{j=1}^rC_q(a_j)t_j\\
&-\frac{1}{N_q}\sum_{1\leq j<k\leq r}B_q(a_j,a_k)t_jt_k +\frac{Q_4(t_1,\dots,t_r)}{N_q}+O_r\left(\frac{\log q}{\phi(q)^2}\right).\\
\end{aligned}
\end{equation}
Now, in our range of $t$ we have
$$ \exp\left(\frac{i}{\sqrt{N_q}}\sum_{j=1}^rC_q(a_j)t_j\right)
=\sum_{m=0}^2\frac{1}{m!N_q^{m/2}}\left(i\sum_{j=1}^rC_q(a_j)t_j\right)^m
+O_r\left(\frac{C_q^3}{\phi(q)^{3/2}}\right),$$
and  $\exp(Q_4(t_1,\dots,t_r)/N_q)=1+Q_4(t_1,\dots,t_r)/N_q+O_r(\log^2 q/\phi(q)^2).$ Therefore, using that
$$ \exp\left(-\frac{1}{N_q}\sum_{1\leq j<k\leq r}B_q(a_j,a_k)t_jt_k\right)=\sum_{l=0}^{\infty}
\frac{\left(-\sum_{1\leq j<k\leq r}B_q(a_j,a_k)t_jt_k\right)^l}{l!N_q^l},$$
along with the previous estimates and equation (3.10) we deduce that the quotient of $\hat{\mu}_q\left(\frac{t_1}{\sqrt{N_q}},
\dots,\frac{t_r}{\sqrt{N_q}}\right)$ and $\exp(-(t_1^2+\cdots+t_r^2)/2)$ equals
\begin{equation}
\sum_{s=0}^1\sum_{m=0}^2\sum_{l=0}^{\infty}
\frac{Q_4(t_1,\dots,t_r)^s}{m!l!N_q^{m/2+l+s}}
\left(i\sum_{j=1}^rC_q(a_j)t_j\right)^m\left(-\sum_{1\leq j<k\leq r}B_q(a_j,a_k)t_jt_k\right)^l+O_r\left(\frac{C_q^3}{\phi(q)^{3/2}}\right).
\end{equation}
We collect the summands above according to $D=m+2s+2l$ (which equals twice the power of $N_q$). Then, it is easy to check that the contribution of the terms $0\leq D\leq 2$ to the main term of (3.11) equals
$$ 1+\frac{i}{\sqrt{N_q}}\sum_{j=1}^rC_q(a_j)t_j
-\frac{1}{2N_q}\left(\sum_{j=1}^rC_q(a_j)t_j\right)^2
-\frac{1}{N_q}\sum_{1\leq j<k\leq r}B_q(a_j,a_k)t_jt_k+\frac{Q_4(t_1,\dots,t_r)}{N_q}.$$
Let $P_{s,m,l}(t_1,\dots,t_r)$ be the homogenous polynomial of degree $m+2l+4s$ defined by
$$ P_{s,m,l}(t_1,\dots,t_r)=\frac{1}{m!l!}C_q^{-m}B_q^{-l} Q_4(t_1,\dots,t_r)^s\left(i\sum_{j=1}^r
C_q(a_j)t_j\right)^m\left(-\sum_{1\leq j<k\leq r}B_q(a_j,a_k)t_jt_k\right)^l.$$
Then the contribution of the terms with $D\geq 3$ to (3.11) equals
$$ \sum_{m=0}^1\sum_{s=0}^2\sum_{\substack{l\geq 0\\2l\geq 3-2s-m}}\frac{C_q^mB_q^l}{N_q^{m/2+l+s}}P_{s,m,l}
(t_1,\dots,t_r).$$
Notice that the coefficients of $P_{s,m,l}$ are bounded uniformly by a function of $l$ since $r$ is fixed and $s,m\leq 2$.
On the other hand since $C_q=q^{o(1)}$ we get
$$ \frac{C_q^mB_q^l}{N_q^{m/2+l+s}}P_{s,m,l}(t_1,\dots,t_r)\ll \frac{r^{2l+m}C_q^mB_q^l||t||^{m+2l+4s}}{m!l!N_q^{m/2+l+s}}\ll \frac{r^{2l}B_q^l||t||^{2l}}{l!N_q^l}.$$
Now, Corollary 5.4 implies that $B_q\leq c\phi(q)$ for some absolute constant $c>0$. Therefore, in our range of $t$, we have $r^{2}B_q||t||^{2}/(lN_q)\leq 2r^2A/l.$
This shows that for a suitably large constant $L(A)$ (which also depends on $r$)  we have
$$ \sum_{m=0}^1\sum_{s=0}^2\sum_{\substack{l>L\\2l\geq 3-2s-m}}\frac{C_q^mB_q^l}{N_q^{m/2+l+s}}P_{s,m,l}(t_1,\dots,t_r)\ll \frac{r^{2L}B_q^L||t||^{2L}}{L!N_q^L},$$
for all $L\geq L(A)$, completing  the proof.
\end{proof}

\section{An asymptotic formula for the densities $\delta_{q; a_1,\dots,a_r}$}

The first step to prove Theorem 1 is to truncate the integral on the RHS of (1.1). To this end we need to bound the tail of the distribution $\mu_{q; a_1,\dots,a_r}$. Our idea consists in relating this tail to the Laplace transform of $\mu_{q;a_1,\dots, a_r}$ using Chernoff's bound. For $s=(s_1,s_2,...,s_r)\in \mathbb{R}^r$ we define
$$
 \mathcal{L}_{q;a_1,\dots, a_r}(s_1,s_2,\dots,s_r):=\int_{x\in \mathbb{R}^r}e^{s_1x_1+\cdots+s_rx_r}d\mu_{q;a_1,\dots, a_r}(x_1,\dots,x_r),
$$
if this integral converges.
The same arguments as in the proof of
Rubinstein and Sarnak for the explicit formula (2.1) of $\hat{\mu}_{q;a_1,\dots, a_r}$, show under GRH and GSH, that $\mathcal{L}_{q;a_1,\dots, a_r}(s)$ exists for all $s\in\mathbb{R}^r$ and
\begin{equation}
\mathcal{L}_{q;a_1,\dots, a_r}(s_1,s_2,\dots,s_r)= \exp\left(-\sum_{j=1}^rC_q(a_j)s_j\right)\prod_{\substack{\chi\neq \chi_0\\\chi\text{ mod } q}}\prod_{\gamma_{\chi}>0}I_0
\left(\frac{2|\sum_{j=1}^r\chi(a_i)s_i|}{\sqrt{\frac14+\gamma_{\chi}^2}}\right),
\end{equation}
where $I_0(t):= \sum_{n=0}^{
\infty}(t/2)^{2n}/n!^2$ is the modified Bessel function of order $0$. We prove

\begin{pro} Assume GRH and GSH. Fix an integer $r\geq 2$ and let $q$ be a large positive integer. Then for $R\geq \sqrt{\phi(q)\log q}$ we have
$$\mu_{q;a_1,\dots, a_r}(|x|_{\infty}>R)\leq \exp\left(-\frac{R^2}{2\phi(q)\log q}\left(1+O\left(\frac{\log\log q}{\log q}\right)\right)\right).$$
\end{pro}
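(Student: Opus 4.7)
The plan is to exploit the explicit product formula (4.1) for the Laplace transform $\mathcal{L}_{q;a_1,\dots,a_r}$, which is valid on all of $\mathbb{R}^r$ under GRH and GSH, and apply Chernoff's inequality coordinate by coordinate. I would first reduce to a one-dimensional question via the union bound
$$ \mu_{q;a_1,\dots,a_r}(|x|_\infty > R) \leq \sum_{j=1}^r \bigl(\mu_{q;a_1,\dots,a_r}(x_j > R) + \mu_{q;a_1,\dots,a_r}(x_j < -R)\bigr), $$
and bound each summand by Chernoff's inequality
$$ \mu_{q;a_1,\dots,a_r}(\pm x_j > R) \leq e^{-sR}\,\mathcal{L}_{q;a_1,\dots,a_r}(0,\dots,\pm s,\dots,0)\qquad (s>0), $$
with $\pm s$ in the $j$-th coordinate and zeros elsewhere.

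The key analytic input is the elementary inequality $I_0(t)\leq e^{t^2/4}$, which follows immediately from $(n!)^2\geq n!$ in the defining power series. Plugging this into (4.1) and using $|\chi(a_j)|=1$ together with the identity $\sum_{\chi\neq\chi_0}\sum_{\gamma_\chi>0}(1/4+\gamma_\chi^2)^{-1}=N_q/2$ yields the clean Gaussian majorant
$$ \mathcal{L}_{q;a_1,\dots,a_r}(0,\dots,\pm s,\dots,0) \leq \exp\Bigl(\mp C_q(a_j)\,s + \tfrac12 s^2 N_q\Bigr). $$
Optimizing the resulting quadratic in $s$ with the choice $s=(R\pm C_q(a_j))/N_q$ and combining the two signs gives
$$ \mu_{q;a_1,\dots,a_r}(|x_j|>R) \leq 2\exp\Bigl(-\tfrac{(R-|C_q(a_j)|)^2}{2N_q}\Bigr) \leq 2\exp\Bigl(-\tfrac{(R-C_q)^2}{2N_q}\Bigr). $$

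To rewrite this in terms of $\phi(q)\log q$, I would apply Lemma 3.1, which gives $N_q = \phi(q)\log q\,(1+O(\log\log q/\log q))$, together with the trivial estimate $C_q \ll q^{\epsilon}$. Since $R \geq \sqrt{\phi(q)\log q}$, the ratio $C_q/R$ is of size $q^{-1/2+o(1)}$, much smaller than $\log\log q/\log q$, so a direct expansion yields
$$ \frac{(R-C_q)^2}{2N_q} = \frac{R^2}{2\phi(q)\log q}\Bigl(1+O\bigl(\tfrac{\log\log q}{\log q}\bigr)\Bigr). $$
Summing the previous bound over $j$ multiplies by the constant $2r$; this contributes only $O(1)$ to the exponent, which is either absorbed into the stated error term when $R$ is large enough, or renders the claimed inequality trivial in the small range near the threshold $R\sim\sqrt{\phi(q)\log q}$, where the right-hand side is bounded below by a positive constant.

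The main hurdle will be the bookkeeping in this last step: verifying that the $\log(2r)$ from the union bound and the $C_q/R$ shift produced by Chernoff's optimization are both dominated by the $\log\log q/\log q$ error inherited from Lemma 3.1. The crucial structural simplification throughout is the bound $I_0(t)\leq e^{t^2/4}$, which converts the infinite product of Bessel functions in (4.1) into a clean exponential of a quadratic form, so that the Chernoff optimization reduces to a single one-line calculus problem rather than requiring a detailed analysis of zeros of $L$-functions.
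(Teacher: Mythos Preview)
Your proposal is correct and follows essentially the same route as the paper: union bound, Chernoff with $\pm s$ in a single coordinate, the inequality $I_0(t)\le e^{t^2/4}$ to collapse the product in (4.1) into $\exp(\tfrac12 s^2 N_q)$, and then Lemma~3.1 together with $C_q\ll q^\epsilon$ to finish. The only cosmetic difference is that the paper takes $s=R/(\phi(q)\log q)$ directly rather than optimizing to $s=(R\pm C_q(a_j))/N_q$, which makes no difference at this level of precision.
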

\begin{proof} First we note that
$$ \mu_{q;a_1,\dots, a_r}(|x|_{\infty}>R) \leq \sum_{j=1}^r \mu_{q;a_1,\dots, a_r}(x_j>R)+ \sum_{j=1}^r\mu_{q;a_1,\dots, a_r}(x_j<-R).
$$  We shall only bound $\mu_{q;a_1,\dots, a_r}(x_j>R)$, since the corresponding bound for $\mu_{q;a_1,\dots, a_r}(x_j<-R)$ can be obtained similarly. Let $s>0$. Then using (4.1) we get
\begin{equation*}
\begin{aligned}
\mu_{q;a_1,\dots, a_r}(x_j>R)&\leq e^{-sR} \int_{(x_1,\dots,x_r)\in \mathbb{R}^r} e^{sx_j}d\mu_{q;a_1,\dots,a_r}(x_1,\dots,x_r)\\
&\leq e^{-sR-sC_q(a_j)}\prod_{\substack{\chi\neq \chi_0\\\chi\text{ mod } q}}\prod_{\gamma_{\chi}>0}I_0
\left(\frac{2s}{\sqrt{\frac14+\gamma_{\chi}^2}}\right).
\end{aligned}
\end{equation*}
Since $I_0(s)\leq \exp(s^2/4)$ for all $s\in \mathbb{R}$ we obtain
\begin{equation*}
\begin{aligned}
 \prod_{\substack{\chi\neq \chi_0\\\chi\text{ mod } q}}\prod_{\gamma_{\chi}>0}I_0
\left(\frac{2s}{\sqrt{\frac14+\gamma_{\chi}^2}}\right)&\leq \exp\left(s^2\sum_{\substack{\chi\neq \chi_0\\\chi\text{ mod } q}}\sum_{\gamma_{\chi}>0}\frac{1}{{\frac14+\gamma_{\chi}^2}}\right)\\
&\leq \exp\left(\frac{s^2\phi(q)\log q}{2}\left(1+O\left(\frac{\log\log q}{\log q}\right)\right)\right),
\end{aligned}
\end{equation*}
by Lemma 3.1.
The result follows by taking $s=R/(\phi(q)\log q)$ along with the fact that $C_q(a_j)\ll_{\epsilon} q^{\epsilon}$ for any $\epsilon>0$.
\end{proof}

Let $\Phi(x):=e^{-x^2/2}$ and denote by $\Phi^{(n)}$ the $n$-th derivative of $\Phi$. Then $\Phi^{(1)}(x)=-xe^{-x^2/2}$, $\Phi^{(2)}(x)=(x^2-1)e^{-x^2/2}$, and more generally we know that
$\Phi^{(n)}(x)= (-1)^nH_n(x)e^{-x^2/2}$ where $H_n$ is the $n$-th Hermite polynomial. The last ingredients we need in order to prove Theorem 1 are the following lemmas
\begin{lem}
Let $n_1,\dots, n_r$ be fixed non-negative integers, and $M$ be a large positive number. Then for any  $(x_1,\dots,x_r)\in \mathbb{R}^r,$ we have
$$ \int_{||\mathbf{t}||<M}e^{i(t_1x_1+\cdots+t_rx_r)}
\prod_{j=1}^rt_j^{n_j}\Phi(t_j)
d\mathbf{t}= (2\pi)^{r/2}\prod_{j=1}^r i^{n_j}H_{n_j}(x_j)e^{-x_j^2/2} +O\left(e^{-M^2/4}\right).$$
\end{lem}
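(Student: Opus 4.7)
The idea is to reduce the claim to a product of one-dimensional Gaussian Fourier integrals, with the tail contribution controlled by crude Gaussian decay. First I would write
\[
\int_{||\mathbf{t}||<M} e^{i(t_1x_1+\cdots+t_rx_r)} \prod_{j=1}^r t_j^{n_j}\Phi(t_j)\, d\mathbf{t}
=\int_{\mathbb{R}^r} e^{i(t_1x_1+\cdots+t_rx_r)} \prod_{j=1}^r t_j^{n_j}\Phi(t_j)\, d\mathbf{t} - E(M),
\]
where $E(M)$ denotes the integral over the complement $||\mathbf{t}||\geq M$. The full integral over $\mathbb{R}^r$ factors as $\prod_{j=1}^r \int_{\mathbb{R}} e^{it_jx_j} t_j^{n_j} e^{-t_j^2/2}\, dt_j$.

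For each one-dimensional factor I would start from the classical identity $\int_{\mathbb{R}} e^{itx}e^{-t^2/2}\,dt = \sqrt{2\pi}\,e^{-x^2/2}$ and differentiate $n$ times in $x$ under the integral sign. This produces $i^n \int_{\mathbb{R}} t^n e^{itx} e^{-t^2/2}\,dt$ on the left and, using the Rodrigues-type formula $(d/dx)^n e^{-x^2/2}=(-1)^nH_n(x)e^{-x^2/2}$, the quantity $\sqrt{2\pi}(-1)^nH_n(x)e^{-x^2/2}$ on the right. Since $(-1)^n/i^n = i^n$, this gives
\[
\int_{\mathbb{R}} t^n e^{itx}e^{-t^2/2}\,dt = \sqrt{2\pi}\,i^n H_n(x)e^{-x^2/2},
\]
and multiplying over $j=1,\dots,r$ yields the main term $(2\pi)^{r/2}\prod_{j=1}^r i^{n_j}H_{n_j}(x_j)e^{-x_j^2/2}$.

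For the tail $E(M)$ I would bound the integrand in absolute value by $\prod_j |t_j|^{n_j} e^{-t_j^2/2}$. On the region $||\mathbf{t}||\geq M$ we have $||\mathbf{t}||^2 = \sum_j t_j^2 \geq M^2$, so splitting the Gaussian gives
\[
e^{-||\mathbf{t}||^2/2} = e^{-||\mathbf{t}||^2/4}\, e^{-||\mathbf{t}||^2/4} \leq e^{-M^2/4}\, e^{-||\mathbf{t}||^2/4}.
\]
The remaining factor $\int_{\mathbb{R}^r} \prod_j |t_j|^{n_j} e^{-t_j^2/4}\,d\mathbf{t}$ factors into a product of absolute Gaussian moments, each of which is a finite constant depending only on $n_j$ (and hence the whole quantity depends only on $r$ and the fixed $n_j$). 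Hence $|E(M)| = O(e^{-M^2/4})$, which is the stated error.

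There is no real obstacle here; the argument is routine Gaussian Fourier analysis. The only things that require care are the bookkeeping of the powers of $i$ when passing from $x$-derivatives to the factor $t^n$ in the integrand, and the choice of splitting $\tfrac12 = \tfrac14+\tfrac14$ in the exponent, which is what produces the specific constant $1/4$ appearing in $e^{-M^2/4}$.
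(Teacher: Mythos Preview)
Your proposal is correct and follows essentially the same approach as the paper: decompose into the full $\mathbb{R}^r$ integral plus a tail, factor the full integral into one-dimensional Gaussian Fourier integrals evaluated via the Hermite relation $\Phi^{(n)}(x)=(-1)^nH_n(x)e^{-x^2/2}$, and bound the tail by crude Gaussian decay. The only cosmetic differences are that the paper phrases the one-dimensional computation via Fourier inversion rather than differentiation under the integral sign, and bounds the tail by $e^{-M^2/2}M^{n_1+\cdots+n_r}\ll e^{-M^2/4}$ instead of your exponent-splitting; these are equivalent.
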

\begin{proof} First, notice that
\begin{equation*}
\int_{\mathbf{t}\in \mathbb{R}^r}e^{i(t_1x_1+\cdots+t_rx_r)}
\prod_{j=1}^r t_j^{n_j}\Phi(t_j)d\mathbf{t} =(2\pi)^{r/2}\prod_{j=1}^r\Psi_j(x_j),
\end{equation*}
where $$\Psi_j(u)= \frac{1}{\sqrt{2\pi}}\int_{-\infty}^{\infty}e^{iuv}\Phi(v)v^{n_j}dv.$$
Since the Fourier transform of $\Phi(u)/(2\pi)$ is $\Phi(v)/\sqrt{2\pi}$, then using standard properties of the Fourier transform, we deduce that $\Phi(v)v^{n_j}/\sqrt{2\pi}$ is the Fourier transform of $\frac{(-i)^{n_j}}{2\pi}\Phi^{(n_j)}(v)$. Therefore the Fourier inversion formula gives
$$ \Psi_j(u)=(-i)^{n_j}\Phi^{(n_j)}(u)= i^{n_j}H_{n_j}(u)e^{-u^2/2}.$$
Finally, note that
$$ \int_{||\mathbf{t}||> M}\left|\prod_{j=1}^rt_j^{n_j}\Phi(t_j)\right|d\mathbf{t}\ll \exp\left(-M^2/2\right)M^{n_1+\cdots+n_r}\ll \exp\left(-M^{2}/4\right),$$
if $M$ is large enough, which completes the proof.
\end{proof}

\begin{lem}  Let $P_n(t_1,\dots,t_r)$ be a homogeneous polynomial of degree $n$, whose coefficients are complex numbers uniformly bounded by a function of $n$. Let $R$ be a large positive number and $M\geq \log R$ be a real number. Then we have
$$\left|\int_{\substack{x_1>x_2>\dots>x_r\\ |\mathbf{x}|_{\infty}<R}}\int_{||\mathbf{t}||\leq
M}e^{i(t_1x_1+\cdots+t_rx_r)}
\exp\left(-\frac{t_1^2+\cdots+t_r^2}{2}\right)
P_n(t_1,\dots,t_r)d\mathbf{t}d\mathbf{x}\right|\ll_{n,r}1.$$
\end{lem}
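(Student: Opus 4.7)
The plan is to split $P_n$ into monomials and apply the preceding lemma term by term. First I would write
$$P_n(t_1,\dots,t_r)=\sum_{\substack{\vec n=(n_1,\dots,n_r)\\ n_1+\cdots+n_r=n}} c_{\vec n}\prod_{j=1}^r t_j^{n_j},$$
where by hypothesis $|c_{\vec n}|\ll_n 1$, and the number of multi-indices $\vec n$ with $n_1+\cdots+n_r=n$ is $\binom{n+r-1}{r-1}\ll_{n,r}1$. By linearity and the triangle inequality, it therefore suffices to bound the contribution of each monomial
$$I_{\vec n}:=\int_{\substack{x_1>\cdots>x_r\\ |\mathbf x|_\infty<R}}\int_{\|\mathbf t\|\leq M} e^{i(t_1x_1+\cdots+t_rx_r)}\exp\!\left(-\tfrac12\sum_j t_j^2\right)\prod_{j=1}^r t_j^{n_j}\,d\mathbf t\,d\mathbf x$$
by a constant depending only on $n$ and $r$.

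Next I would swap the order of integration (legal by Fubini, since both regions are compact and the integrand is continuous) and perform the $\mathbf t$-integration first. By the previous Lemma applied with the monomial $\prod_j t_j^{n_j}$, the inner integral equals
$$(2\pi)^{r/2}\prod_{j=1}^r i^{n_j} H_{n_j}(x_j)\,e^{-x_j^2/2}\;+\;O\!\left(e^{-M^2/4}\right),$$
uniformly in $\mathbf x$. Integrating the main term over the subset $\{x_1>\cdots>x_r,\,|\mathbf x|_\infty<R\}\subset\mathbb R^r$ and taking absolute values inside, one gets a contribution bounded by
$$(2\pi)^{r/2}\prod_{j=1}^r\int_{-\infty}^{\infty}|H_{n_j}(x_j)|\,e^{-x_j^2/2}\,dx_j,$$
which is a finite quantity depending only on $(n_1,\dots,n_r)$, and hence only on $n$ and $r$.

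It remains to control the error term, whose contribution to $I_{\vec n}$ is at most $O\bigl((2R)^r\,e^{-M^2/4}\bigr)=O\bigl(R^r e^{-M^2/4}\bigr)$. This is where the hypothesis $M\geq \log R$ is used: it gives $M^2/4\geq (\log R)^2/4$, and since $(\log R)^2/4$ grows faster than $r\log R$, one has $R^r e^{-M^2/4}=o(1)$ as $R\to\infty$, so this error is $O_r(1)$ for $R$ large. Summing over the finitely many $\vec n$ with $|\vec n|=n$ completes the proof.

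The argument is essentially a routine change-of-order followed by the previous lemma; the only mild point to verify is that the calibration $M\geq \log R$ in the hypothesis is just strong enough to absorb the volume factor $R^r$ from the $\mathbf x$-region, which is precisely why the statement is phrased with this condition rather than a weaker one.
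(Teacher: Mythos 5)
Your proof is correct and follows essentially the same route as the paper: reduce to monomials, evaluate the inner $\mathbf t$-integral via the preceding lemma, absorb the $O(e^{-M^2/4})$ error against the $R^r$ volume using $M\geq\log R$, and observe the main term is a bounded Hermite-times-Gaussian integral. Your bound on the main term (taking absolute values and extending the domain to all of $\mathbb R^r$) is slightly more direct than the paper's, which writes the truncated integral as the full integral minus a tail and bounds each piece separately, but the content is the same.
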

\begin{proof}
Since the coefficients of $P_n(t_1,\dots,t_r)$ are uniformly bounded by a function of $n$, it is sufficient to show that the statement holds when $P_n(t_1,\dots, t_r)=t_1^{n_1}\dots t_r^{n_r},$ where $n_i$ are non-negative integers with $n_1+\cdots+n_r=n$. Using Lemma 4.2 we get
\begin{equation}
\begin{aligned}
&\int_{\substack{x_1>x_2>\dots>x_r\\ |\mathbf{x}|_{\infty}<R}}\int_{||\mathbf{t}||\leq M}e^{i(t_1x_1+\cdots+t_rx_r)}
\exp\left(-\frac{t_1^2+\cdots+t_r^2}{2}\right)t_1^{n_1}\dots t_r^{n_r}d\mathbf{t}d\mathbf{x}\\
&=i^{n}(2\pi)^{r/2}\int_{\substack{x_1>x_2>\dots>x_r\\ |\mathbf{x}|_{\infty}<R}}H_{n_1}(x_1)\dots H_{n_r}(x_r)\exp\left(-\frac{x_1^2+\cdots+x_r^2}{2}\right)d\mathbf{x}+o_R(1),\\
\end{aligned}
\end{equation}
since $\exp\left(-M^2/4\right)R^r\ll e^{-\frac{\log^2R}{8}}$ by our hypothesis on $M$. The lemma then follows upon noting that
$$ \int_{\substack{x_1>x_2>\dots>x_r\\ |\mathbf{x}|_{\infty}>R}}H_{n_1}(x_1)\dots H_{n_r}(x_r)\exp\left(-\frac{x_1^2+\cdots+x_r^2}{2}\right)dx_1\dots dx_r\ll_{n,r} R^{n}e^{-R^2/2}=o_R(1),$$
and
$$ \int_{x_1>x_2>\dots>x_r}H_{n_1}(x_1)\dots H_{n_r}(x_r)\exp\left(-\frac{x_1^2+\cdots+x_r^2}{2}\right)dx_1\dots dx_r\ll _{n,r}1.$$
\end{proof}
\begin{proof}[Proof of Theorem 1]
Let $R:=\sqrt{N_q}\log q.$ To lighten the notation in this proof we write $\delta_q$ for $\delta_{q;a_1,\dots, a_r}$ and $\mu_q$ for $\mu_{q;a_1,\dots,a_r}$. Then by Proposition 4.1 we obtain
\begin{equation}
\delta_q= \int_{y_1>y_2>\dots>y_r}d\mu_q(y_1, \dots, y_r)= \int_{\substack{y_1>y_2>\dots>y_r\\ |\mathbf{y}|_{\infty}\leq R}}d\mu_q(y_1, \dots, y_r)+ O\left(\exp\left(-\frac{\log^2 q}{10}\right)\right).
\end{equation}
Next, we apply the Fourier inversion formula to the measure $\mu_q$ to get
$$ \int_{\substack{y_1>y_2>\dots>y_r\\ |\mathbf{y}|_{\infty}\leq R}}d\mu_q(y_1, \dots, y_r)
=(2\pi)^{-r}\int_{\substack{y_1>y_2>\dots>y_r\\ |\mathbf{y}|_{\infty}\leq R}}\int_{\mathbf{s}\in \mathbb{R}^r}e^{i(s_1y_1+\cdots+s_ry_r)}\hat{\mu}_q(s_1,\dots, s_r)d\mathbf{s}d\mathbf{y}.$$
Let $A= A(r)\geq r$ be a suitably large constant. Then using Proposition 3.2 with $\epsilon:= A(N_q)^{-1/2}\sqrt{\log q}$ implies
$$  \int_{\mathbf{s}\in \mathbb{R}^r}e^{i(s_1y_1+\cdots+s_ry_r)}\hat{\mu}_q(s_1,\dots, s_r)d\mathbf{s}= \int_{||\mathbf{s}||\leq \epsilon}e^{i(s_1y_1+\cdots+s_ry_r)}\hat{\mu}_q(s_1,\dots, s_r)d\mathbf{s}+ O\left(\frac{1}{q^{2A}}\right).$$
Hence we obtain
\begin{equation}\delta_q= (2\pi)^{-r}\int_{\substack{y_1>y_2>\dots>y_r\\ |\mathbf{y}|_{\infty}\leq R}}\int_{||\mathbf{s}||\leq \epsilon}e^{i(s_1y_1+\cdots+s_ry_r)}\hat{\mu}_q(s_1,\dots, s_r)d\mathbf{s}d\mathbf{y}+ O\left(\frac{1}{q^{A}}\right),
\end{equation}
using that $R^rq^{-2A}\ll q^{-A}.$ Upon making the change of variables
$$ t_j:=  \sqrt{N_q} s_j, \text{ and } x_j:= \frac{y_j}{\sqrt{N_q}},$$
we infer from (4.4) that
\begin{equation} \delta_q= (2\pi)^{-r}\int_{\substack{x_1>x_2>\dots>x_r\\ |\mathbf{x}|_{\infty}\leq \log q}}\int_{||\mathbf{t}|| \leq A\sqrt{\log q} }e^{i(t_1x_1+\cdots+t_rx_r)}\hat{\mu}_q\left(\frac{t_1}{\sqrt{N_q}},\dots, \frac{t_r}{\sqrt{N_q}}\right)d\mathbf{t}d\mathbf{x}+ O\left(\frac{1}{q^{A}}\right).
\end{equation}
Now we use the asymptotic expansion of $\hat{\mu}_q\left(t_1N_q^{-1/2},\dots, t_rN_q^{-1/2}\right)$ proved in Proposition 3.3. We take $L=L(A)\geq 2r$ to be a suitably large constant. Then, Lemma 4.3 shows that the contribution of the error term along with the terms corresponding to the polynomials $Q_4$ and $P_{s,m,l}$ (in the asymptotic expansion of Proposition 3.3) to the integral on the RHS of (4.5) is
\begin{equation}
\ll_r \frac{1}{N_q}+ \sum_{m=0}^1\sum_{s=0}^2\sum_{\substack{0\leq l\leq L\\2l\geq 3-2s-m}}\frac{C_q^mB_q^l}{N_q^{m/2+l+s}}+\frac{(\log q)^rB_q^L}{N_q^L}\ll_r
\frac{1}{N_q}+ \frac{C_qB_q}{N_q^{3/2}}+ \frac{B_q^2}{N_q^2},
\end{equation}
since $B_q\ll N_q/\log q$ by Corollary 5.4.
Now we shall compute the contribution of the remaining terms in the asymptotic formula of $\hat{\mu}_q$ to the integral in (4.5). Appealing to Lemma 4.2 along with the fact that  $\exp(-(x_1^2+\cdots+x_r^2)/2)$ is a continuous symmetric function in $x_1,\dots,x_r$, we obtain
\begin{equation}
\begin{aligned}
& (2\pi)^{-r}\int_{\substack{x_1>x_2>\dots>x_r\\ |\mathbf{x}|_{\infty}\leq \log q}}\int_{||\mathbf{t}|| \leq A\sqrt{\log q} }e^{i(t_1x_1+\cdots+t_rx_r)}\exp\left(-\frac{t_1^2+\cdots+t_r^2}{2}\right)d\mathbf{t}d\mathbf{x}\\
&=(2\pi)^{-r/2}\int_{\substack{x_1>x_2>\dots>x_r\\ |\mathbf{x}|_{\infty}\leq \log q}}\exp\left(-\frac{x_1^2+\cdots+x_r^2}{2}\right) d\mathbf{x}+O\left(\frac{1}{q^A}\right)\\
&= \frac{1}{r!(2\pi)^{r/2}}\int_{\mathbf{x}\in \mathbb{R}^r}\exp\left(-\frac{x_1^2+\cdots+x_r^2}{2}\right)d\mathbf{x}+O\left(\frac{1}{q^A}\right)= \frac{1}{r!}+O\left(\frac{1}{q^A}\right).\\
\end{aligned}
\end{equation}
Similarly, we infer from Lemma 4.2 that for $1\leq j\leq r$, we have
\begin{equation}
\begin{aligned}
&(2\pi)^{-r}\int_{\substack{x_1>x_2>\dots>x_r\\ |\mathbf{x}|_{\infty}\leq \log q}}\int_{||\mathbf{t}|| \leq A\sqrt{\log q} }t_je^{i(t_1x_1+\cdots+t_rx_r)}\exp\left(-\frac{t_1^2+\cdots+t_r^2}{2}\right)d\mathbf{t}d\mathbf{x}\\
&= i(2\pi)^{-r/2}\int_{\substack{x_1>x_2>\dots>x_r\\ |\mathbf{x}|_{\infty}\leq \log q}}x_j\exp\left(-\frac{x_1^2+\cdots+x_r^2}{2}\right) d\mathbf{x}+O\left(\frac{1}{q^A}\right)= i\alpha_j(r)+  O\left(\frac{1}{q^A}\right),\\
\end{aligned}
\end{equation}
and
\begin{equation}
\begin{aligned}
&(2\pi)^{-r}\int_{\substack{x_1>x_2>\dots>x_r\\ |\mathbf{x}|_{\infty}\leq \log q}}\int_{||\mathbf{t}|| \leq A\sqrt{\log q} }t_j^2e^{i(t_1x_1+\cdots+t_rx_r)}\exp\left(-\frac{t_1^2+\cdots+t_r^2}{2}\right)d\mathbf{t}d\mathbf{x}\\
&= -\lambda_j(r)+O\left(\frac{1}{q^A}\right).\\
\end{aligned}
\end{equation}
For $1\leq j<k\leq r$ we analogously obtain
\begin{equation}
\begin{aligned}
 &(2\pi)^{-r}\int_{\substack{x_1>x_2>\dots>x_r\\ |\mathbf{x}|_{\infty}\leq \log q}}\int_{||\mathbf{t}|| \leq A\sqrt{\log q} }t_jt_ke^{i(t_1x_1+\cdots+t_rx_r)}\exp\left(-\frac{t_1^2+\cdots+t_r^2}{2}\right)d\mathbf{t}d\mathbf{x}\\
 &= -\beta_{j,k}(r)+O\left(\frac{1}{q^A}\right).\\
 \end{aligned}
\end{equation}
The theorem now follows upon combining Proposition 3.3 with the estimates (4.5)-(4.10).
\end{proof}
 In the remaining part of this section, we explicitly compute the constants $\alpha_j(r)$ and $\beta_{j,k}(r)$ for $r=3$. To simplify the computations we prove the following identities
\begin{lem} Let $r\geq 2$. Then for any $1\leq j<k\leq r$ we have $$ \alpha_j(r)= -\alpha_{r+1-j}(r) \text{ and } \beta_{j,k}(r)= \beta_{r+1-k,r+1-j}(r).$$
\end{lem}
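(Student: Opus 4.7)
The plan is to establish both identities through a single change of variables that exploits the symmetry of the Gaussian density and the inequality-constrained region. Specifically, I would perform the substitution $y_i := -x_{r+1-i}$ for $1 \leq i \leq r$.

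Under this substitution the Jacobian has absolute value $1$, the quadratic form $x_1^2 + \cdots + x_r^2$ is preserved (it becomes $y_r^2 + \cdots + y_1^2$), and the chain of inequalities $x_1 > x_2 > \cdots > x_r$ transforms into $-y_r > -y_{r-1} > \cdots > -y_1$, which is equivalent to $y_1 > y_2 > \cdots > y_r$. Thus the region of integration is mapped bijectively onto itself. Under the substitution, the integrand $x_j$ becomes $-y_{r+1-j}$, so
\begin{equation*}
\alpha_j(r) = (2\pi)^{-r/2}\int_{y_1 > y_2 > \cdots > y_r}(-y_{r+1-j})\exp\left(-\frac{y_1^2+\cdots+y_r^2}{2}\right)dy_1\cdots dy_r = -\alpha_{r+1-j}(r),
\end{equation*}
which is the first identity. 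For the second identity, the integrand $x_j x_k$ becomes $(-y_{r+1-j})(-y_{r+1-k}) = y_{r+1-j}y_{r+1-k}$. Setting $j' := r+1-k$ and $k' := r+1-j$, the hypothesis $j<k$ gives $j' < k'$, and so
\begin{equation*}
\beta_{j,k}(r) = (2\pi)^{-r/2}\int_{y_1 > \cdots > y_r}y_{k'}y_{j'}\exp\left(-\frac{y_1^2+\cdots+y_r^2}{2}\right)dy_1\cdots dy_r = \beta_{r+1-k,\,r+1-j}(r),
\end{equation*}
as required.

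There is no substantial obstacle here: the argument is purely a reflection symmetry of both the Gaussian weight and the ordered simplex. The only detail that needs a moment's care is verifying that the reversal-and-negation map sends the region $\{x_1 > \cdots > x_r\}$ to itself, which is straightforward from $x_i > x_{i+1} \iff -x_i < -x_{i+1} \iff y_{r+1-i} < y_{r-i}$.
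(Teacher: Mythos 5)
Your proof is correct and uses the same change of variables $y_i = -x_{r+1-i}$ as the paper; the paper states the $\beta_{j,k}$ case is "similar" while you spell it out, but the argument is identical.
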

\begin{proof} We only prove the identity for the $\alpha_j(r)$ since the proof for the $\beta_{j,k}(r)$ is similar. Recall that
$$ \alpha_j(r) =(2\pi)^{-r/2}\int_{x_1>x_2>\dots>x_r}x_j\exp\left(-\frac{x_1^2+\cdots+x_r^2}{2}\right) dx_1\dots dx_r.$$
Upon making the change of variables $y_j= -x_{r+1-j}$ we deduce that
$$ \alpha_j(r)=-(2\pi)^{-r/2}\int_{y_1>y_2>\dots>y_r}y_{r+1-j}\exp\left(-\frac{y_1^2+\cdots+y_r^2}{2}\right) dy_1\dots dy_r=-\alpha_{r+1-j}(r).$$
\end{proof}
\begin{lem} We have $\beta_{1,2}(2)=0.$
Moreover, one has
$$ \alpha_1(3)= \frac{1}{4\sqrt{\pi}}, \alpha_2(3)= 0,  \alpha_3(3)= -\frac{1}{4\sqrt{\pi}},$$
and
$$ \beta_{1,2}(3)= \beta_{2,3}(3)= \frac{1}{4\pi\sqrt{3}}, \beta_{1,3}(3)= -\frac{1}{2\pi\sqrt{3}}.$$
\end{lem}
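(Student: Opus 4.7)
The first identity $\beta_{1,2}(2)=0$ is immediate from (2.4): the sum $\sum_{1\le j<k\le 2}\beta_{j,k}(2)$ consists of the single term $\beta_{1,2}(2)$, which therefore vanishes.

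For the $r=3$ identities I first reduce everything to two core computations. Lemma 4.4 applied with $j=2$, $r=3$ gives $\alpha_2(3)=-\alpha_2(3)=0$, while $j=1$, $r=3$ yields $\alpha_3(3)=-\alpha_1(3)$; similarly, with $(j,k)=(1,2)$ it gives $\beta_{1,2}(3)=\beta_{2,3}(3)$. The constraint (2.4) applied to the $\beta$-family then forces $\beta_{1,3}(3)=-2\beta_{1,2}(3)$. It thus suffices to compute $\alpha_1(3)$ and $\beta_{1,2}(3)$ explicitly.

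I would tackle $\beta_{1,2}(3)$ first. Integrating the innermost variable $x_3\in(-\infty,x_2)$ produces the factor $\sqrt{2\pi}\,\Phi(x_2)$, where $\Phi$ is the standard normal CDF; integrating $x_1\in(x_2,\infty)$ against $x_1 e^{-x_1^2/2}$ produces $e^{-x_2^2/2}$. The remaining task is
$$\sqrt{2\pi}\int_{\mathbb{R}} x_2\,\Phi(x_2)\,e^{-x_2^2}\,dx_2,$$
which integration by parts (with $u=\Phi(x_2)$, $dv=x_2 e^{-x_2^2}dx_2$) collapses to $\tfrac{1}{2}\int_{\mathbb{R}}\phi(x_2)e^{-x_2^2}dx_2=\tfrac{1}{2\sqrt{2\pi}}\sqrt{2\pi/3}$. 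Multiplying by the normalizing constant $(2\pi)^{-3/2}$ yields $\beta_{1,2}(3)=1/(4\pi\sqrt{3})$.

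For $\alpha_1(3)$ the same nested integration strategy works: integrating $x_3$ out produces $\sqrt{2\pi}\,\Phi(x_2)$, and then the identity $\int_{-\infty}^{x_1}\Phi(t)\phi(t)\,dt=\Phi(x_1)^2/2$ lets one integrate over $x_2$. The three-dimensional integral thereby reduces to
$$\pi\int_{\mathbb{R}} x_1\,\Phi(x_1)^2\,e^{-x_1^2/2}\,dx_1.$$
A second integration by parts with $u=\Phi(x_1)^2$ turns this into $\tfrac{2}{\sqrt{2\pi}}\int_{\mathbb{R}}\Phi(x_1)e^{-x_1^2}dx_1$, and the elementary identity $\int_{\mathbb{R}}\Phi(x)e^{-x^2}dx=\sqrt{\pi}/2$---which follows from writing $\Phi(x)=\tfrac{1}{2}+(\Phi(x)-\tfrac{1}{2})$ and noting that the second summand is odd while $e^{-x^2}$ is even---then delivers $\alpha_1(3)=1/(4\sqrt{\pi})$. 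No single step is delicate; the only mild obstacle is careful bookkeeping of the successive factors $\sqrt{2\pi}$ that arise from the partial Gaussian integrations against the global normalizing constant $(2\pi)^{-3/2}$.
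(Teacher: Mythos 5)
Your proposal is correct and arrives at all six values, but via a noticeably different route from the paper's proof, so a comparison is worthwhile.

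For $\beta_{1,2}(2)=0$ you appeal to the symmetry constraint (2.4) (the sum $\sum_{j<k}\beta_{j,k}(2)$ is the single term $\beta_{1,2}(2)$), which is slicker than the paper's direct calculation that the marginal integral $\int_{\mathbb{R}}x_2 e^{-x_2^2}\,dx_2$ vanishes by oddness. Both are immediate.

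The reduction step is the same: you, like the paper, invoke Lemma~4.4 to get $\alpha_2(3)=0$, $\alpha_3(3)=-\alpha_1(3)$, $\beta_{2,3}(3)=\beta_{1,2}(3)$, and then (2.4) to get $\beta_{1,3}(3)=-2\beta_{1,2}(3)$.

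The two core computations are where you diverge. You integrate from the innermost variable outward, tracking the standard normal CDF through identities such as $\int_{-\infty}^{x}\Phi(t)\phi(t)\,dt=\Phi(x)^2/2$ and the parity observation $\int_{\mathbb{R}}\Phi(x)e^{-x^2}\,dx=\sqrt{\pi}/2$, with two integrations by parts. This is valid and entirely elementary, but it requires careful bookkeeping of $\sqrt{2\pi}$ factors, as you yourself note. The paper instead integrates $x_1$ first in both cases. For $\alpha_1(3)$ this leads to $\frac{1}{(2\pi)^{3/2}}\int_{x>y}e^{-x^2-y^2/2}\,dx\,dy$, and the change of variables $X=-x$, $Y=-y$ shows this equals half the unrestricted double Gaussian integral — no CDF needed at all. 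For $\beta_{1,2}(3)$ iterating the elementary antiderivative $\int_{c}^{\infty}u\,e^{-u^2/2}\,du=e^{-c^2/2}$ twice lands immediately on $\frac{1}{2(2\pi)^{3/2}}\int_{\mathbb{R}}e^{-3x_3^2/2}\,dx_3$. The paper's route is shorter and avoids introducing the error function; your route generalizes a bit more mechanically if one wanted all $\alpha_j(r)$ and $\beta_{j,k}(r)$.

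One caution if you were to insert your argument into the paper: the paper has already reserved $\Phi(x):=e^{-x^2/2}$ in Section~4 (just before Lemma~4.2), whereas you use $\Phi$ for the standard normal CDF and $\phi$ for its density. You would need to rename these to avoid a genuine clash of notation.
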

\begin{proof}
First we have
$$
\beta_{1,2}(2)=\frac{1}{2\pi} \int_{-\infty}^{\infty} \int_{x_2}^{\infty} x_1x_2\exp\left(-\frac{x_1^2+x_2^2}{2}\right)dx_1dx_2
= \frac{1}{2\pi}\int_{-\infty}^{\infty} x_2e^{-x_2^2}dx_2=0. $$
Now we deal with the case $r=3$. Recall that
\begin{equation}
\sum_{j=1}^3\alpha_j(3)= \sum_{1\leq j<k\leq 3}\beta_{j,k}(3)=0.
\end{equation}
We begin by computing
$$
\alpha_1(3)= \frac{1}{(2\pi)^{3/2}}\int_{x_1>x_2>x_3} x_1 \exp\left(-\frac{x_1^2+x_2^2+x_3^2}{2}\right)dx_1dx_2dx_3.
$$
To this end, we integrate with respect to $x_1$ first to get
$$ \alpha_1(3)= \frac{1}{(2\pi)^{3/2}}\int_{x>y} \exp\left(-x^2-\frac{y^2}{2}\right)dxdy= \frac{1}{(2\pi)^{3/2}}\int_{X<Y} \exp\left(-X^2-\frac{Y^2}{2}\right)dXdY,
$$
by making the change of variables $X=-x$ and $Y=-y$. Hence, we deduce that
$$ \alpha_1(3)= \frac{1}{2(2\pi)^{3/2}}\int_{-\infty}^{\infty}\int_{-\infty}^{\infty}
\exp\left(-x^2-\frac{y^2}{2}\right)dxdy=\frac{1}{4\sqrt{\pi}}.
$$
On the other hand, Lemma 4.4 shows that $\alpha_3(3)=-\alpha_1(3)$, and this combined with equation (4.11) leads to $\alpha_2(3)=0$. Furthermore, we have
$$
\beta_{1,2}(3)=\frac{1}{(2\pi)^{3/2}}\int_{x_1>x_2>x_3} x_1x_2 \exp\left(-\frac{x_1^2+x_2^2+x_3^2}{2}\right)dx_1dx_2dx_3.
$$
Performing the integration with respect to $x_1$ first, then with respect to $x_2$ gives us
$$ \beta_{1,2}(3)= \frac{1}{2(2\pi)^{3/2}}
\int_{-\infty}^{\infty}e^{-\frac{3x_3^2}{2}}dx_3= \frac{1}{4\pi\sqrt{3}}.$$
The remaining estimates follow upon using Lemma 4.4 to get $\beta_{2,3}(3)=\beta_{1,2}(3)$, and then applying equation (4.11) to deduce that $\beta_{1,3}(3)=-2\beta_{1,2}(3).$
\end{proof}

\section{The average order of $|B_q(a,b)|$}

In this section we prove upper and lower bounds (of the same order of magnitude) for the first moment of $|B_q(a,b)|$ over pairs of residue classes $(a,b)\in \mathcal{A}_2(q)$.
To this end, we begin by proving the following key proposition
\begin{pro} Assume GRH. Let $q$ be a large integer, and $(a,b)\in \mathcal{A}_2(q)$. Put $x=(q\log q)^2$. Then we have
\begin{equation*}
\begin{aligned}
B_q(a,b)&=4\log q-\phi(q)l_q(a,b)\log 2-\phi(q)\frac{\Lambda\left(\frac{q}{(q,a-b)}\right)}{\phi\left(\frac{q}{(q,a-b)}\right)}-\phi(q)\sum_{\substack{n\leq 2x\log x\\ bn\equiv a \text{ mod }q}}\frac{\Lambda(n)}{n}e^{-n/x}\\
&-\phi(q)\sum_{\substack{n\leq 2x\log x\\ an\equiv b \text{ mod }q}}\frac{\Lambda(n)}{n}e^{-n/x}-\phi(q)\sum_{p^{\nu}\parallel q}\sum_{\substack{1\leq e\leq 2\log x\\ap^e\equiv b  \text{ mod } q/p^{\nu}}}\frac{\log p}{p^{e+\nu-1}(p-1)}\\
& -\phi(q)\sum_{p^{\nu}\parallel q}\sum_{\substack{1\leq e\leq 2\log x\\bp^e\equiv a  \text{ mod } q/p^{\nu}}}\frac{\log p}{p^{e+\nu-1}(p-1)}+O(\log\log q),
\end{aligned}
\end{equation*}
where $l_q(a,b)=1$ if $a+b\equiv 0\text{ mod } q$ and $0$ otherwise.
\end{pro}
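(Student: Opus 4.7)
The strategy is to unwind $B_q(a,b)$ via Lemma 3.1 and character orthogonality. Using the symmetry $\sum_{\gamma_\chi > 0} = \frac{1}{2}\sum_{\gamma_\chi}$ for non-trivial $\chi$ together with formula (3.1), one obtains
\begin{equation*}
B_q(a,b) = \frac{1}{2}\sum_{\chi \neq \chi_0}\bigl(\chi(b/a) + \chi(a/b)\bigr)\left[\log q^*_\chi + 2\mathrm{Re}\frac{L'}{L}(1,\chi^*) - \chi(-1)\log 2 + \gamma_0\right],
\end{equation*}
which splits $B_q(a,b)$ into four pieces that I treat separately.

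The $\log q^*_\chi$ piece is evaluated directly from formula (3.2) applied with $a$ replaced by $b/a$ and $a/b$: since $a\neq b$, neither quantity is $\equiv 1 \text{ mod } q$, and the identity $(q,b/a-1) = (q,a-b)$ yields the $-\phi(q)\Lambda(q/(q,a-b))/\phi(q/(q,a-b))$ term of the statement. The $\chi(-1)\log 2$ piece equals $-\log 2\sum_{\chi \neq \chi_0}[\chi(-b/a) + \chi(-a/b)]$, which by orthogonality collapses to the condition $-b/a \equiv 1 \text{ mod } q$ (i.e.\ $a+b \equiv 0 \text{ mod } q$), giving $-\phi(q)\, l_q(a,b)\log 2 + O(1)$. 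The $\gamma_0$ piece is clearly $O(1)$. The main step is the $L'/L$ piece, which after collapsing $2\mathrm{Re}$ via the relation $\overline{\chi}(b/a)=\chi(a/b)$ becomes $\sum_{\chi\neq\chi_0}(\chi(b/a)+\chi(a/b))(L'/L)(1,\chi^*)$. Under GRH I establish the smoothed approximation
\begin{equation*}
-\frac{L'}{L}(1,\chi^*) = \sum_{n \geq 1}\frac{\chi^*(n)\Lambda(n)}{n}e^{-n/x} + E_\chi,
\end{equation*}
with $E_\chi$ negligible, by writing the right-hand side as $\frac{1}{2\pi i}\int_{(c)}(-L'/L)(s+1,\chi^*)\Gamma(s)x^s\, ds$ and shifting past the simple pole of $\Gamma$ at $s = 0$ (residue $-L'/L(1,\chi^*)$) onto a contour $\mathrm{Re}(s) = -1/2+\epsilon$, just to the right of the zero line $s = -1/2 + i\gamma_\chi$. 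Stirling's exponential decay of $\Gamma$ on vertical lines combined with standard GRH bounds for $L'/L$ in the critical strip shows that the shifted integral is $O(x^{-1/2+\epsilon}\log^{O(1)} q)$, so that with $x = (q\log q)^2$ the cumulative error after summing over $\chi$ is absorbed into the stated $O(\log\log q)$. The tail $n > 2x\log x$ contributes negligibly thanks to $e^{-n/x}$.

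Substituting the approximation and swapping summation orders, I split $\chi^*(n) = \chi(n)\mathbf{1}_{(n,q)=1} + \chi^*(n)\mathbf{1}_{(n,q)>1}$. The coprime part, via $\sum_{\chi \neq \chi_0}\chi(bn/a) = \phi(q)\mathbf{1}_{bn \equiv a \text{ mod } q} - 1$ and the analogous identity with $a$ and $b$ swapped, produces the two main congruence sums of the statement together with a diagonal contribution $2\sum_{(n,q)=1}\Lambda(n)n^{-1}e^{-n/x}$. By Mertens' theorem and the bound $\sum_{p|q}\log p/p = O(\log\log q)$, this last quantity equals $2\log x + O(\log\log q) = 4\log(q\log q) + O(\log\log q) = 4\log q + O(\log\log q)$, producing the leading $4\log q$. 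The $(n,q)>1$ part is supported on prime powers $n = p^e$ with $p \mid q$; the condition $\chi^*(p^e)\neq 0$ restricts to those $\chi$ whose $p$-component (as a character mod $p^\nu$, where $p^\nu \parallel q$) is trivial. Such $\chi$ are in bijection with characters $\chi'$ mod $q/p^\nu$, under which $\chi(c) = \chi'(c)$ for $(c,p)=1$ and $\chi^*(p^e) = \chi'(p^e)$. Orthogonality mod $q/p^\nu$, together with $\phi(q/p^\nu) = \phi(q)/(p^{\nu-1}(p-1))$, produces exactly the prime-power sums of the statement once one discards the factor $e^{-p^e/x}$ and truncates at $e \leq 2\log x$, both at negligible cost since $\sum_e \log p/p^e$ converges rapidly.

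The main obstacle is the smoothed Perron-type estimate for $L'/L(1,\chi^*)$ under GRH, and the need to control the cumulative error after weighting by $\chi(b/a) + \chi(a/b)$ and summing over the $\phi(q)$ characters. The remainder of the argument is bookkeeping around the distinction between $\chi$ and $\chi^*$ at primes dividing $q$.
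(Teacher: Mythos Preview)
Your proof is correct and follows essentially the same route as the paper: decompose $B_q(a,b)$ via formula (3.1) into the $\log q^*_\chi$, $\chi(-1)\log 2$, $\gamma_0$, and $L'/L$ pieces, handle the first three by orthogonality and (3.2), and treat the $L'/L$ piece via a smoothed approximation, swapping the $n$- and $\chi$-sums and evaluating the resulting character sums. The only differences are cosmetic: the paper cites the smoothed approximation for $L'/L(1,\chi^*)$ as Proposition 3.10 of \cite{FiM} (your equation via contour shifting is exactly that result), and the paper cites the evaluation of $\sum_\chi \chi(a/b)\chi^*(p^e)$ as Proposition 3.4 of \cite{FiM} (your argument via the $p$-component decomposition of $\chi$ is precisely its proof). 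The paper phrases the extraction of the $4\log q$ term as ``adding back the principal character'' and invoking Lemma 5.2, which is equivalent to your handling of the $-1$ in orthogonality; note that the $-1$ also appears in the $(n,q)>1$ branch, but as you implicitly use, that extra piece is $O(\log\log q)$.
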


\emph{Remark 5.1} This result implies that $B_q(a,b)<0$ if $|B_q(a,b)|>5\log q$.

Although the major part of this proposition is proved in \cite{FiM} (see Theorems 1.4 and 1.7 there), we chose to include the details of the proof for the seek of completeness. The only new input is the following lemma which corresponds to the contribution of the principal character $\chi_0$ mod $q$.
\begin{lem} Let $q$ be a large positive integer and $y\geq q$ be a real number. Then
$$ \sum_{\substack{n\geq 1\\(n,q)=1}}\frac{\Lambda(n)}{n}e^{-n/y}=\log y+ O(\log\log y).$$
\end{lem}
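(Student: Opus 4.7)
The plan is to remove the coprimality condition and then handle a small correction. Since $\Lambda$ is supported on prime powers and $(p^k,q)>1$ precisely when $p\mid q$, I would write
$$\sum_{\substack{n\geq 1\\(n,q)=1}}\frac{\Lambda(n)}{n}e^{-n/y}=\sum_{n\geq 1}\frac{\Lambda(n)}{n}e^{-n/y}-\sum_{p\mid q}\sum_{k\geq 1}\frac{\log p}{p^k}e^{-p^k/y},$$
and estimate the two pieces separately.

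For the unrestricted sum the target is $\sum_{n\geq 1}\Lambda(n)n^{-1}e^{-n/y}=\log y+O(1)$, which I would establish by Mellin inversion. Using $e^{-x}=(2\pi i)^{-1}\int_{(2)}\Gamma(s)x^{-s}\,ds$, this sum equals
$$\frac{1}{2\pi i}\int_{(2)}\Gamma(s)\,y^s\left(-\frac{\zeta'}{\zeta}(1+s)\right)ds.$$
Shifting the contour to $\mathrm{Re}(s)=-1/4$, which sits inside the classical zero-free region of $\zeta$, the only pole crossed is at $s=0$, where $\Gamma(s)$ and $-\zeta'(1+s)/\zeta(1+s)$ each have a simple pole with residue $1$. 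Expanding
$$\Gamma(s)\left(-\frac{\zeta'}{\zeta}(1+s)\right)y^s=\left(\frac{1}{s^2}+\frac{O(1)}{s}\right)\left(1+s\log y+O(s^2\log^2 y)\right)$$
near $s=0$, the residue equals $\log y+O(1)$, and the shifted integral is $O(y^{-1/4})$ by standard bounds for $|\Gamma(s)|$ and $|\zeta'/\zeta(s)|$ on vertical lines. (Alternatively, partial summation from the Prime Number Theorem with classical error term $\sum_{n\leq x}\Lambda(n)/n=\log x+M+O(\exp(-c\sqrt{\log x}))$ yields the same conclusion.)

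For the correction sum over $p\mid q$, since $\sum_{k\geq 1}(\log p)p^{-k}e^{-p^k/y}\leq (\log p)/(p-1)$, it suffices to show the familiar divisor-sum estimate $\sum_{p\mid q}(\log p)/(p-1)=O(\log\log q)$. Splitting at $(\log q)^2$ and using Mertens' theorem together with the trivial bound $\prod_{p\mid q}p\leq q$, I get
$$\sum_{p\mid q}\frac{\log p}{p-1}\leq\sum_{p\leq(\log q)^2}\frac{\log p}{p-1}+\frac{1}{(\log q)^2}\sum_{p\mid q}\log p\ll\log\log q.$$
Finally, since $y\geq q$ gives $\log\log q\leq\log\log y$ for large $q$, combining the two estimates proves the lemma. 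The main (and essentially only) non-routine input is the Mellin-transform estimate for the unrestricted sum; both the $O(\log\log q)$ divisor bound and its absorption into $O(\log\log y)$ are standard.
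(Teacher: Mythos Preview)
Your decomposition into the full sum minus the terms with $(n,q)>1$, and your bound $\sum_{p\mid q}(\log p)/(p-1)\ll\log\log q$, match the paper's approach exactly. The only difference is in estimating the unrestricted sum $\sum_{n\geq 1}\Lambda(n)n^{-1}e^{-n/y}$: the paper splits elementarily into three ranges ($n>y\log^2 y$, $y\log\log y<n\leq y\log^2 y$, and $n\leq y\log\log y$) and uses only the prime number theorem in its weakest form, obtaining $\log y+O(\log\log y)$; you invoke Mellin inversion (or partial summation from the PNT with error term) to get the slightly sharper $\log y+O(1)$. Either route suffices.

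One point to correct: your Mellin argument asserts that the line $\mathrm{Re}(s)=-1/4$ ``sits inside the classical zero-free region of $\zeta$''. This is not true --- the corresponding line for $\zeta(1+s)$ is $\mathrm{Re}=3/4$, well inside the critical strip, and the classical zero-free region is only $\sigma>1-c/\log|t|$. The contour shift as written would pick up residues at any zeros with $\mathrm{Re}(\rho)>3/4$, which are not known to be absent unconditionally. Under RH (which the paper assumes throughout, though this particular lemma is stated and proved unconditionally) the shift is fine; otherwise you should simply rely on your parenthetical partial-summation alternative, which is unconditional and correct.
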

\begin{proof} First note that
$$ \sum_{\substack{n\geq 1\\(n,q)>1}}\frac{\Lambda(n)}{n}e^{-n/y}\leq  \sum_{p|q}\sum_{k=1}^{\infty} \frac{\log p}{p^k}= \sum_{p|q}\frac{\log p}{p-1}\ll \log\log q.$$
Thus it suffices to evaluate
$$ \sum_{n=1}^{\infty}\frac{\Lambda(n)}{n}e^{-n/y}.$$
We split the above sum into three parts: $n>y\log^2y$, $y\log\log y< n\leq y\log^2y$ and finally $n\leq y\log\log y.$ The contribution of the first part is
$$ \sum_{n> y\log^2y}\frac{\Lambda(n)}{n}e^{-n/y}\leq \sum_{n> y\log^2y}\frac{1}{n^2}\leq \frac{1}{y},$$
which follows from the fact that $e^{-n/y}\leq n^{-2}$ for $n>y\log^2y$. Now the contribution of the second part is
$$ \sum_{y\log\log y< n\leq y\log^2y}\frac{\Lambda(n)}{n}e^{-n/y}\leq \frac{1}{\log y}\sum_{n\leq y\log^2y}\frac{\Lambda(n)}{n}\ll 1. $$
Finally using that $1-e^{-t}\leq 2t$ for all $t>0$, we deduce that the contribution of the last part equals
$$ \sum_{n\leq y\log\log y} \frac{\Lambda(n)}{n}e^{-n/y}= \sum_{n\leq y\log\log y} \frac{\Lambda(n)}{n}+ O\left(\frac{1}{y}\sum_{n\leq y\log\log y} \Lambda(n)\right)=\log y+ O(\log\log y),$$
which follows from the prime number theorem.
\end{proof}
\begin{proof}[Proof of Proposition 5.1]
 Let $(a,b)\in \mathcal{A}_2(q)$. First we infer from (3.1) that
\begin{equation}
 \begin{aligned}
 B_q(a,b)&=\sum_{\substack{\chi\neq \chi_0\\ \chi\text{ mod } q}}\sum_{\gamma_{\chi}>0}\frac{\chi\left(\frac{a}{b}\right)+ \chi\left(\frac{b}{a}\right)}{\frac{1}{4}+\gamma_{\chi}^2}= \frac{1}{2} \sum_{\substack{\chi\neq \chi_0\\ \chi\text{ mod } q}}\sum_{\gamma_{\chi}}\frac{\chi\left(\frac{a}{b}\right)+ \chi\left(\frac{b}{a}\right)}{\frac{1}{4}+\gamma_{\chi}^2}\\
 &= \frac12\sum_{\substack{\chi\neq \chi_0\\ \chi\text{ mod } q}}\left(\chi\left(\frac{a}{b}\right)+ \chi\left(\frac{b}{a}\right)\right)\log q^{*}_{\chi}-\phi(q)l_q(a,b)\log2 \\
 & + \sum_{\substack{\chi\neq \chi_0\\ \chi\text{ mod } q}}\left(\chi\left(\frac{a}{b}\right)+ \chi\left(\frac{b}{a}\right)\right)\text{Re}
 \frac{L'(1,\chi^{*})}{L(1,\chi^{*})}+O(1),
 \end{aligned}
\end{equation}
using the orthogonality relations for characters. In order to evaluate the first sum on the RHS of (5.1) we use equation (3.2) which gives
\begin{equation}
\frac12\sum_{\substack{\chi\neq \chi_0\\ \chi\text{ mod } q}}(\chi(a/b)+ \chi(b/a))\log q^{*}_{\chi}= -\phi(q)\frac{\Lambda\left(\frac{q}{(q,a-b)}\right)}{\phi\left(\frac{q}{(q,a-b)}\right)}.
\end{equation}
Now we compute the sum over the $L$-values. First we record a standard approximation formula for $L'/L(1,\chi^{*})$ under GRH, which corresponds to Proposition 3.10 of \cite{FiM}
\begin{equation}
 \frac{L'(1,\chi^{*})}{L(1,\chi^{*})}= -\sum_{n=1}^{\infty}\frac{\chi^{*}(n)\Lambda(n)}{n}e^{-n/y} + O\left(\frac{\log q}{y^{1/2}}\right).
\end{equation}
Inserting this estimate into the second sum on the RHS of (5.1), we obtain
\begin{equation}
\begin{aligned}
&\sum_{\substack{\chi\neq \chi_0\\ \chi\text{ mod } q}}\left(\chi\left(\frac{a}{b}\right)+ \chi\left(\frac{b}{a}\right)\right)\text{Re}\frac{L'(1,\chi^{*})}{L(1,\chi^{*})}= \text{Re}\sum_{\substack{\chi\neq \chi_0\\ \chi\text{ mod } q}}\left(\chi\left(\frac{a}{b}\right)+ \chi\left(\frac{b}{a}\right)\right)\frac{L'(1,\chi^{*})}{L(1,\chi^{*})}\\
&= -\text{Re}\sum_{n=1}^{\infty}\frac{\Lambda(n)}{n}e^{-n/y}\sum_{\substack{\chi\neq \chi_0\\ \chi\text{ mod } q}}(\chi(a/b)\chi^{*}(n)+\chi(b/a)\chi^{*}(n))
+ O\left(\frac{\phi(q)\log q}{y^{1/2}}\right).\\
\end{aligned}
\end{equation}
Let $p$ be a prime number and $e\geq 1$ a positive integer. To evaluate the inner sum over characters in the RHS of (5.4) we use Proposition 3.4 of \cite{FiM} which states that
\begin{equation}
\sum_{\chi\text{ mod }q} \chi\left(a/b\right) \chi^{*}(p^e)= \left\{
\begin{aligned}&\phi(q)  \ \ \ \ \ \ \text{ if } p\nmid q \text{ and } ap^e\equiv b \text{ mod } q,\\
& \phi(q/p^{\nu}) \ \  \text{ if } p^{\nu}\parallel q \text{ and } ap^e\equiv b \text{ mod } q/p^{\nu},\\
& 0 \ \ \ \ \ \ \ \ \ \ \ \text{ otherwise}.
\end{aligned}\right.
\end{equation}
Note that the condition $ap^e\equiv b \text{ mod } q$ implies that $p\nmid q$ since $(b,q)=1$. Therefore, choosing $y=(q\log q)^2$ in (5.4), and adding the contribution of the principal character (which was evaluated in Lemma 5.2) we obtain from (5.5) that the RHS of (5.4) equals
\begin{equation*}
\begin{aligned}
&- \phi(q)\sum_{\substack{n\geq 1\\ bn\equiv a \text{ mod }q}}\frac{\Lambda(n)}{n}e^{-\frac{n}{y}} - \phi(q)\sum_{\substack{n\geq 1\\ an\equiv b \text{ mod }q }}\frac{\Lambda(n)}{n}e^{-\frac{n}{y}}
-\sum_{p^{\nu}\parallel q}\phi\left(\frac{q}{p^{\nu}}\right)\sum_{\substack{e\geq 1\\ap^e\equiv b  \text{ mod } q/p^{\nu}}}\frac{\log p}{p^e}e^{-\frac{p^e}{y}}\\
&-\sum_{p^{\nu}\parallel q}\phi\left(\frac{q}{p^{\nu}}\right)\sum_{\substack{e\geq 1\\bp^e\equiv a  \text{ mod } q/p^{\nu}}}\frac{\log p}{p^e}e^{-\frac{p^e}{y}}+4\log q+ O(\log\log q).
\end{aligned}
\end{equation*}
Moreover, if $n\geq 2y\log y$, then $e^{-n/y}\leq 1/n$. This implies that
$$ \sum_{\substack{n\geq 2y\log y\\ bn\equiv a \text{ mod }q}}\frac{\Lambda(n)}{n}e^{-\frac{n}{y}}+\sum_{p^{\nu}\parallel q}\sum_{\substack{e\geq 2\log y\\bp^e\equiv a  \text{ mod } q/p^{\nu}}}\frac{\log p}{p^e}e^{-\frac{p^e}{y}} \ll \sum_{n\geq 2y\log y}\frac{\Lambda(n)}{n^2}\ll \frac{1}{q^2}.$$
Notice that when $p^{\nu}\parallel q$ we have $\phi(q/p^{\nu})= \phi(q)/(p^{\nu-1}(p-1))$ since $(p^{\nu}, q/p^{\nu})=1$. Thus, using that $1-e^{-t}\leq 2t $ for all $t>0$, we obtain
$$ \sum_{p^{\nu}\parallel q}\phi\left(\frac{q}{p^{\nu}}\right)\sum_{\substack{1\leq e\leq 2\log y\\bp^e\equiv a  \text{ mod } q/p^{\nu}}}\frac{\log p}{p^e}\left(1-e^{-\frac{p^e}{y}}\right)\ll \frac{1}{q\log q}\sum_{p| q}\frac{\log p}{p-1}\leq \frac{1}{q}.$$
The proposition follows upon collecting the above estimates.
\end{proof}
 Next, we establish the following lemma which, when combined with Proposition 5.1, yields $B_q(a,b)\ll \phi(q)$.
\begin{lem} Let $q$ be a large positive integer, $(a,b)\in \mathcal{A}_2(q)$, and denote by $s$ the least positive residue of $ab^{-1}\text{ mod } q$. Put $x=(q\log q)^2$. Then
$$  \sum_{\substack{n\leq 2x\log x\\ bn\equiv a \text{ mod }q}}\frac{\Lambda(n)}{n}e^{-n/x}=\frac{\Lambda(s)}{s}+O\left(\frac{\log^2 q}{q}\right).$$

\end{lem}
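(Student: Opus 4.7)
The plan is to split the sum at $n = s$ and bound the two pieces by direct elementary estimates; GRH plays no role in this particular lemma. The structural observation that drives everything is that $s$ is the least positive residue of $ab^{-1} \pmod q$, so $1 \leq s \leq q-1$ and the next integer in the progression $\{n : n \equiv s \pmod q\}$ beyond $n = s$ is $s+q > q$. Thus the sum decomposes into the isolated term $n = s$ plus a tail living entirely in the interval $(q, 2x\log x]$.

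For the $n=s$ contribution I would note that $s/x \leq (q-1)/(q\log q)^2 \ll 1/(q\log^2 q)$, so $e^{-s/x} = 1 + O(s/x)$. Combined with the trivial bound $\Lambda(s) \leq \log q$, this gives
$$\frac{\Lambda(s)}{s}e^{-s/x} = \frac{\Lambda(s)}{s} + O\!\left(\frac{1}{q\log q}\right),$$
which is well within the claimed error.

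For the remaining tail I would use only $\Lambda(n) \leq \log n \leq \log(2x\log x) = O(\log q)$, $e^{-n/x} \leq 1$, and the fact that $n \equiv s \pmod q$ with $n > s$ forces $n = s+mq$ with $m \geq 1$, so $n \geq mq$. This yields
$$\sum_{\substack{s<n\leq 2x\log x\\ n\equiv s\,(\mathrm{mod}\,q)}}\frac{\Lambda(n)}{n}e^{-n/x}\ \ll\ \log q \cdot \sum_{m=1}^{M}\frac{1}{mq}\ \ll\ \frac{\log q}{q}\log M,$$
where $M \leq 2x\log x/q = O(q\log^3 q)$, so $\log M = O(\log q)$ and the tail is $O(\log^2 q/q)$. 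Summing the two contributions yields the lemma.

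No substantive obstacle arises. The argument hinges on a single structural observation (the AP has a gap of length $q$ immediately above $s$) together with the trivial bound $\Lambda(n)/n \leq \log q/n$ applied to the $O(q\log^3 q)$ admissible $n$ in the tail. The smoothing $e^{-n/x}$ is used only through $e^{-n/x}\leq 1$; its full decay is not needed here, and enters the analysis only later when this lemma is fed into Proposition 5.1.
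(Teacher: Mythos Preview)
Your proof is correct and follows essentially the same approach as the paper: both isolate the term $n=s$, use $e^{-s/x}=1+O(s/x)$ (the paper phrases this as $1-e^{-t}\le 2t$) to replace $\frac{\Lambda(s)}{s}e^{-s/x}$ by $\frac{\Lambda(s)}{s}$, and then bound the tail over $n=s+jq$ with $j\ge 1$ by $\log q\sum_{j\ll q\log^3 q}1/(jq)\ll (\log^2 q)/q$. The only differences are cosmetic (your stated error $O(1/(q\log q))$ for the $n=s$ piece is weaker than the paper's $O(1/q^2)$, but still well within the required $O(\log^2 q/q)$).
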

\begin{proof} Since $1-e^{-t}\leq 2t$ for all $t>0$, then
$$ \frac{\Lambda(s)}{s}e^{-s/x}=\frac{\Lambda(s)}{s}+ O\left(\frac{1}{q^{2}}\right).$$
On the other hand if $n\neq s$ is a positive integer such that $n\equiv s\text{ mod }q$, then $n=s+jq$ for some $j\geq 1$. Therefore, we have
$$ \sum_{\substack{n\leq 2x\log x\\ n\equiv s \text{ mod }q}}\frac{\Lambda(n)}{n}e^{-n/x}- \frac{\Lambda(s)}{s}\ll \log q \sum_{1\leq j\leq q\log^4q}\frac{1}{s+jq}+ \frac{1}{q^2}\ll \frac{\log^2 q}{q}.$$
\end{proof}
\begin{cor} For any $(a,b)\in \mathcal{A}_2(q)$ we have
$$|B_q(a,b)|\ll\phi(q).$$
\end{cor}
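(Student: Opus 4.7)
The plan is to substitute the explicit formula of Proposition 5.1 into $B_q(a,b)$ and bound each of the seven terms individually, showing that every one of them is $O(\phi(q))$. The terms $4\log q$ and $\phi(q)l_q(a,b)\log 2$ are trivially of this size since $\log q\ll \phi(q)$ for $q$ large. For the term $\phi(q)\Lambda(q/(q,a-b))/\phi(q/(q,a-b))$, observe that $\Lambda$ is supported on prime powers, and if $m=p^k$ then $\Lambda(m)/\phi(m)=\log p/(p^{k-1}(p-1))\leq \log 2$; hence this contribution is also $O(\phi(q))$.

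Next I would address the two main sums $\phi(q)\sum_{bn\equiv a\,\text{mod}\,q}\Lambda(n)n^{-1}e^{-n/x}$ and the symmetric one with the roles of $a$ and $b$ exchanged. Lemma 5.3 rewrites each of them as $\phi(q)\Lambda(s)/s+O(\phi(q)\log^2q/q)$, where $s$ is the least positive residue of $ab^{-1}$ (respectively $ba^{-1}$) modulo $q$. Since $\Lambda(s)/s$ is uniformly bounded in $s\geq 1$ (it vanishes unless $s=p^k$, in which case it equals $(\log p)/p^k\leq 1$), the leading term contributes $O(\phi(q))$, and the error contributes $O(\log^2q)$, which is also $O(\phi(q))$ for $q$ large.

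For the prime power sums $\phi(q)\sum_{p^{\nu}\parallel q}\sum_{e\geq 1}\log p/(p^{e+\nu-1}(p-1))$, I would drop the congruence condition on $e$ and compute the geometric series explicitly to obtain the upper bound
\[
\phi(q)\sum_{p^{\nu}\parallel q}\frac{\log p}{p^{\nu-1}(p-1)^2}\;\leq\;\phi(q)\sum_{p}\frac{\log p}{(p-1)^2}\;\ll\;\phi(q),
\]
since the series $\sum_{p}\log p/(p-1)^2$ converges. Together with the trailing $O(\log\log q)$ from Proposition 5.1, collecting these bounds yields $|B_q(a,b)|\ll \phi(q)$. I do not anticipate any real obstacle: all the substantive work has been absorbed into Proposition 5.1 and Lemma 5.3, and what remains is genuinely routine bookkeeping, with the only mildly delicate step being the observation that $\Lambda(s)/s$ is bounded uniformly in $s$.
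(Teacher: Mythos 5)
Your proposal is correct and follows essentially the same route as the paper's proof: substitute Proposition 5.1, apply Lemma 5.3 to reduce the two main sums to $\Lambda(s)/s$, observe that $\Lambda(s)/s$ and $\Lambda(m)/\phi(m)$ are uniformly bounded, and control the prime-power sums via the convergent series $\sum_p \log p/(p-1)^2$. There is no meaningful difference in method.
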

\begin{proof} First we note that $\Lambda(s)/s\leq (\log s)/s$ which is a decreasing function for $s\geq 3$. Moreover, the term $\Lambda(q/(q,a-b))/\phi(q/(q,a-b))$ is non-zero only when $q/(q,a-b)=p^{l}$ for some prime $p\geq 2$ and $l\geq 1$. In this case
$$\frac{\Lambda(q/(q,a-b))}{\phi(q/(q,a-b))}= \frac{\log p}{p^{l-1}(p-1)}\leq \frac{\log p}{p-1}\leq  \log 2.$$
Finally we have
$$ \sum_{p^{\nu}\parallel q}\sum_{\substack{1\leq e\leq 2\log x\\bp^e\equiv a  \text{ mod } q/p^{\nu}}}\frac{\log p}{p^{e+\nu-1}(p-1)}\leq \sum_{p|q}\frac{\log p}{(p-1)^2}\ll 1.$$
 Thus by Lemma 5.3 and Proposition 5.1, the result follows.
\end{proof}
In the remaining part of this section, we prove Theorems 5 and 7.
\begin{proof}[Proof of Theorem 5]
Surprisingly the lower bound is much easier to establish than the upper bound. Indeed we only use the definition of $B_q(a,b)$ in this case.

{\bf The lower bound}.
Note that
$$ \sum_{(a,b)\in\mathcal{A}_2(q)}B_q(a,b)=\sum_{\substack{\chi\neq \chi_0 \\ \chi\text{ mod } q}}\sum_{\gamma_{\chi}>0}\frac{1}{\frac14 +\gamma_{\chi}^2}\sum_{\substack{a\text{ mod }q\\ (a,q)=1}}\sum_{\substack{b\neq a\text{ mod }q\\ (b,q)=1}}\left(\chi(a/b)+\chi(b/a)\right).$$
Write $s\equiv ab^{-1}\text{ mod } q$. When $a$ is fixed and $b$ varies over all reduced residue classes distinct from $a$, $s$ runs over all reduced residue classes different from $1$. Then, using the orthogonality relations for characters we obtain
$$ \sum_{\substack{a\text{ mod }q\\ (a,q)=1}}\sum_{\substack{b\neq a\text{ mod }q\\ (b,q)=1}}\left(\chi(a/b)+\chi(b/a)\right)= -2\phi(q).$$
Therefore, since $|\mathcal{A}_2(q)|=\phi(q)^2-\phi(q),$ and $N_q=\phi(q)(\log q+O(\log\log q))$ we deduce that
$$ \frac{1}{|\mathcal{A}_2(q)|} \sum_{(a,b)\in\mathcal{A}_2(q)}|B_q(a,b)|\geq
 -\frac{1}{|\mathcal{A}_2(q)|}\sum_{(a,b)\in\mathcal{A}_2(q)}B_q(a,b)= \log q +O(\log\log q).$$

{\bf The upper bound}.
We use Proposition 5.1. First, remark that
$ \sum_{(a,b)\in\mathcal{A}_2(q)}l_q(a,b)\leq \phi(q)$, which implies that the contribution of this sum to the upper bound in Theorem 5 is $\ll 1$. Take $1\leq a, b\leq q-1$.
Let $d=(q,a-b)$ and write $a-b=ds$. Then $-q/d\leq s\leq q/d$ and $(s,q/d)=1$. On the other hand, for any choice of $d$ and $s$ satisfying these conditions there are at most $\phi(q)$ pairs $(a,b)$ such that $1\leq a\neq b\leq q-1$, $a$ and $b$ are coprime to $q$ and $a-b=ds$. Thus we obtain
\begin{equation}
\begin{aligned}
\sum_{(a,b)\in \mathcal{A}_2(q)}\frac{\Lambda\left(\frac{q}{(q,a-b)}\right)}{\phi\left(\frac{q}{(q,a-b)}\right)}&\leq \phi(q)\sum_{d|q}\frac{\Lambda(q/d)}{\phi(q/d)}\sum_{\substack{-q/d\leq s\leq q/d\\ (s,q/d)=1}}1\\
&= 2\phi(q)\sum_{d|q}\Lambda(q/d)=2\phi(q)\log q.
\end{aligned}
\end{equation}
Let $x=(q\log q)^2$. Then
\begin{equation*}
\begin{aligned}
 \sum_{(a,b)\in \mathcal{A}_2(q)}\sum_{\substack{n\leq 2x\log x\\ n\equiv ab^{-1} \text{ mod }q}}\frac{\Lambda(n)}{n}e^{-n/x}&=\sum_{\substack{n\leq 2x\log x\\(n,q)=1}}\frac{\Lambda(n)}{n}e^{-n/x} \sum_{\substack{(a,b)\in \mathcal{A}_2(q)\\ab^{-1}\equiv n \text{ mod }q}}1\\
 &\leq \phi(q) \sum_{\substack{n\leq 2x\log x\\(n,q)=1}}\frac{\Lambda(n)}{n}e^{-n/x}\\
 &\leq  2\phi(q)\log q + O\left(\phi(q)\log\log q\right),
\end{aligned}
\end{equation*}
which follows from Lemma 5.2.
Finally, using an analogous argument we deduce that
$$ \sum_{(a,b)\in \mathcal{A}_2(q)}\sum_{p^{\nu}\parallel q}\sum_{\substack{1\leq e\leq 2\log x\\ap^e\equiv b  \text{ mod } q/p^{\nu}}}\frac{\log p}{p^{e+\nu-1}(p-1)}\leq \phi(q)\sum_{p|q}\sum_{e=1}^{\infty}\frac{\log p}{p^e(p-1)}\ll \phi(q),$$
which completes the proof.
\end{proof}

\begin{proof}[Proof of Theorem 7] First, notice that $|\mathcal{A}_r(q)|=\phi(q)^r+O_r\left(\phi(q)^{r-1}\right).$ Let $S_q$ be the set of pairs  $(a,b)\in\mathcal{A}_2(q)$ such that $|B_q(a,b)|\geq \sqrt{\phi(q)}$. Then Theorem 5 shows that
$$ |S_q| \sqrt{\phi(q)}\leq \sum_{(a,b)\in \mathcal{A}_2(q)}|B_q(a,b)|\ll\phi(q)^2\log q,$$
which gives $|S_q|\ll \phi(q)^{3/2}\log q.$ Now define $\Omega_r(q)$ to be the set of $r$-tuples $(a_1,\dots,a_r)\in \mathcal{A}_r(q)$ such that $(a_i,a_j)\in S_q$ for some $1\leq i\neq j\leq r$. Then $|\Omega_r(q)|\ll_r \phi(q)^{r-1/2}\log q$. On the other hand, if $(a_1,\dots,a_r)\in \mathcal{A}_r(q)\setminus \Omega_r(q)$ then  $|B_q(a_i,a_j)|\leq \sqrt{\phi(q)}$ for all $1\leq i<j\leq r$. Hence, in this case, we infer from Theorem 1 that
$$ \delta_{q;a_1,\dots,a_r}= \frac{1}{r!}-\frac{1}{\sqrt{N_q}}\sum_{1\leq j\leq r}\alpha_j(r)C_q(a_j) +O\left(\frac{1}{\sqrt{N_q\log q}}\right).$$
Since the $C_q(a_j)$ are integers, the theorem follows upon noting that
$$\sum_{1\leq j\leq r}\alpha_j(r)C_q(a_j)\neq 0\implies |\sum_{1\leq j\leq r}\alpha_j(r)C_q(a_j)|\gg_r 1.$$

\end{proof}

\section{Extreme values of $B_q(a,b)$ and explicit constructions}

Throughout this section we take the residues $a_i$ modulo $q$ so that $|a_i|\leq q/2.$ The proofs of Theorems 2, 4 and 6 are based on explicit constructions of the $a_i$. Our strategy consists in choosing these residue classes in such a way to make exactly one of the terms $B_q(a_i,a_j)$ large (using Proposition 6.1 below) and all the others small. Moreover, since this term must be  negative (see remark 5.1), we use Lemma 6.3 below to control the sign of its contribution to the asymptotic formula of the densities $\delta_{q;a_1,\dots,a_r}$. When $|a|$ and $|b|$ are relatively small compared to $q$, we can precisely understand in which cases does $B_q(a,b)$ get large. Let us define the real valued function
\begin{equation} \Lambda_0(x):=\begin{cases} \displaystyle{\frac{\Lambda(x)}{x}} & \text{ if } x\in \mathbb{N},\\
0 & \text{ otherwise.}\\
\end{cases}
\end{equation}
\begin{pro} Let $q$ be a large integer and $a,b$ be distinct integers coprime to $q$ such that $1\leq |a|,|b|<q/2$.

I) If $a$ and $b$ have different signs, then
$$ B_q(a,b)= -\phi(q)l(a,b)\log 2+ O\left((|a|+|b|)\log^2q\right),$$
where $l(a,b)=1$ if $a=-b$, and equals $0$ otherwise.

II) If $a$ and $b$ have the same sign, then
$$B_q(a,b)= -\phi(q)\Lambda_0\left(\frac{\max(|a|,|b|)}{\min(|a|,|b|)}\right)+ O\left((|a|+|b|)\log^2q\right) .$$
\end{pro}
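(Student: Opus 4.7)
The plan is to apply Proposition 5.1 and estimate each term on the right-hand side using the size constraint $|a|, |b| < q/2$. Since $|a+b| < q$, we have $l_q(a,b) = l(a,b)$, which furnishes the main term of Case~I. Whenever $\Lambda(q/(q,a-b))/\phi(q/(q,a-b))$ is nonzero, $q/(q,a-b)$ equals a prime power $p^l \geq q/|a-b| \geq q/(|a|+|b|)$; combined with $\phi(p^l) \gg p^l$ this gives a total contribution of $O((|a|+|b|)\log q)$, and the constant $4\log q$ is absorbed into this error. Using the symmetry $B_q(-a,-b) = B_q(a,b)$ (evident from the definition), I may assume $a > 0$ throughout.

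For the two sums over $n$ I would first apply Lemma 5.3 to replace each by $\Lambda(s_i)/s_i$ plus an error $O(\log^2 q/q)$, where $s_1$ is the least positive residue of $ab^{-1} \pmod q$ and $s_2$ that of $ba^{-1} \pmod q$. To identify $s_1$, write $bn = a + kq$ and pick the smallest positive integer solution. In the same-sign case I may reduce further to $a > b > 0$; then $k = 0$ is available iff $b \mid a$, in which case $s_1 = a/b$ and $\Lambda(s_1)/s_1 = \Lambda_0(a/b)$, yielding the main term of Case~II. Otherwise $k \geq 1$ and $s_1 \geq q/b$, so the contribution is $O(|b|\log q)$. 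The companion sum cannot have $k = 0$ since $a > b > 0$ precludes $a \mid b$, so $s_2 \geq q/a$ and it contributes $O(|a|\log q)$. In the opposite-sign case, writing $b' = -b > 0$ forces $k \geq 1$ in both linear equations, giving $s_1 \geq (q-a)/b' \geq q/(2|b|)$ and $s_2 \geq q/(2|a|)$, so both sums contribute only $O((|a|+|b|)\log q)$.

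The main obstacle is the pair of prime-power sums $\phi(q) \sum_{p^\nu \parallel q} \sum_e \log p/(p^{e+\nu-1}(p-1))$, where the inner sum ranges over positive integers $e$ with $ap^e \equiv b \pmod{q/p^\nu}$ (and symmetrically with $a, b$ swapped). The key observation is that $p \mid q$ while $\gcd(b, q) = 1$ forces $p \nmid b$, so the exact equality $ap^e = b$ is impossible; hence any solution $e_0$ satisfies $|ap^{e_0} - b| \geq q/p^\nu$, yielding $p^{e_0} \geq (q/p^\nu - |b|)/|a|$. Summing the geometric progression in $e \geq e_0$ bounds the inner sum by $O(\log p/((p-1) p^{e_0+\nu-1}))$. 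I would then split into two regimes according to whether $p^\nu \leq q/(2|b|)$ or not: the first yields $p^{e_0} \gg q/(|a| p^\nu)$ and gives a per-prime contribution of $O(|a|\log p/q)$, while the second uses $p^{-\nu} \leq 2|b|/q$ together with $e_0 \geq 1$ to give $O(|b|\log p/q)$. Multiplying by $\phi(q) \leq q$ and summing over $p \mid q$ via $\sum_{p \mid q}\log p \leq \log q$ produces $O((|a|+|b|)\log q)$. Combining all of the above in Proposition 5.1 yields both Cases~I and~II with total error $O((|a|+|b|)\log q + \log^2 q) = O((|a|+|b|)\log^2 q)$, since $|a|+|b| \geq 1$.
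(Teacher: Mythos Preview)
Your proposal is correct and follows essentially the same route as the paper's proof: apply Proposition~5.1, use Lemma~5.3 to reduce the two $n$-sums to $\Lambda(s_i)/s_i$, identify the $s_i$ via the sign and divisibility analysis, and bound the prime-power sums using the observation that $ap^e=b$ is impossible (since $p\mid q$ and $(b,q)=1$), which forces $p^{e+\nu}$ to be large. Your inline treatment of the prime-power sums is exactly the content of the paper's Lemma~6.2; the only cosmetic difference is that you split according to the size of $p^{\nu}$ whereas the paper bounds $p^{e+\nu}\ge q/(|a|+|b|)$ directly and sums the resulting geometric series, but both arguments rest on the same inequality $|ap^{e}-b|\ge q/p^{\nu}$ and give the same final error.
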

An important ingredient to the proof of this result is the following lemma.
\begin{lem}Let $q$ be a large integer and $a,b$ be distinct integers coprime to $q$ such that $1\leq |a|, |b|< q/2$. Then

$$ \sum_{p^{\nu}||q}\sum_{\substack{ 1\leq e\leq 5\log q\\ ap^e\equiv b \text{ mod } q/p^{\nu}}} \frac{\log p}{p^{e+\nu-1}(p-1)}\ll \frac{(|a|+|b|)\log^2 q}{q}.$$
\end{lem}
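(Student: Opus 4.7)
The plan is to exploit the coprimality $(b,q)=1$ in order to rule out the only term in the double sum which would resist a uniform bound, and then to carry out a short two-case analysis on the size of $|b|$ relative to $q/p^\nu$.

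The key observation, which I would establish first, is that in every term appearing in the sum the quantity $ap^e - b$ is automatically non-zero. Indeed, $p^\nu \parallel q$ with $\nu\geq 1$ forces $p\mid q$, so $(b,q)=1$ gives $p\nmid b$; but $ap^e=b$ with $e\geq 1$ would imply $p\mid b$, a contradiction. Hence $ap^e-b$ is a non-zero multiple of $q/p^\nu$, so $|ap^e-b|\geq q/p^\nu$, and the triangle inequality yields
$$|a|\,p^e \;\geq\; \frac{q}{p^\nu}-|b|.$$
This is the main conceptual point of the proof: without it, one would face the potential term $ap^e=b$, whose contribution $\log p/(p^{e+\nu-1}(p-1))$ admits only a constant-size upper bound that cannot be absorbed into the error.

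With this inequality in hand, I split according to whether $|b|\leq q/(2p^\nu)$ or $|b|>q/(2p^\nu)$. In the first case, $p^e\geq E_p:=q/(2|a|p^\nu)$ for every admissible $e$, and a geometric tail bound gives
$$\sum_{\substack{e\geq 1\\ p^e\geq E_p}} \frac{1}{p^e} \;\leq\; \frac{2}{E_p} \;=\; \frac{4|a|p^\nu}{q}.$$
The contribution of such a prime $p$ to the double sum is therefore at most $\frac{\log p}{p^{\nu-1}(p-1)}\cdot\frac{4|a|p^\nu}{q}\ll \frac{|a|\log p}{q}$, and summing over $p\mid q$ using the elementary bound $\sum_{p\mid q}\log p\leq \log q$ gives a total of $O(|a|\log q/q)$. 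In the second case, the inequality $p^\nu>q/(2|b|)$ together with $\sum_{e\geq 1}p^{-e}=1/(p-1)$ produces a per-prime contribution of $\ll |b|\log p/(qp)$, and summing over $p\mid q$ using $\sum_{p\mid q}\log p/p\leq \log q$ gives $O(|b|\log q/q)$.

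Adding the two cases yields the sharper estimate $O((|a|+|b|)\log q/q)$, which a fortiori gives the stated bound $O((|a|+|b|)\log^2 q/q)$. The only non-mechanical ingredient is the sign observation of the second paragraph; everything else reduces to the elementary divisor sum bounds $\sum_{p\mid q}\log p\leq \log q$ and $\sum_{p\mid q}\log p/p\leq \log q$, together with the hypothesis $|a|,|b|<q/2$ which ensures that the triangle-inequality step is not vacuous.
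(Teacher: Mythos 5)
Your proof is correct, and the central idea --- that $ap^e - b$ cannot vanish because $p\mid q$ while $(b,q)=1$, so $|ap^e-b|\geq q/p^\nu$ --- is exactly the paper's. Where you diverge is in the tail estimate: the paper bounds each term of the inner sum uniformly by $(|a|+|b|)p\log p/(q(p-1))$ and then multiplies by the crude count $O(\log q)$ of admissible $e$, which is where its $\log^2 q$ comes from. You instead sum the geometric series $\sum_e p^{-e}$ exactly, and this buys you the genuinely sharper bound $O((|a|+|b|)\log q/q)$, which of course implies the stated estimate. Your two-case split on whether $|b|\leq q/(2p^\nu)$ is, however, not needed: the paper's observation that $|a|p^e+|b|\leq(|a|+|b|)p^e$ (valid since $p^e\geq 1$) yields $p^e\geq q/((|a|+|b|)p^\nu)$ uniformly with no case analysis, and feeding that lower bound into your geometric-tail estimate gives $\sum_e p^{-e}\leq 2(|a|+|b|)p^\nu/q$ directly, after which the per-prime contribution is $\ll(|a|+|b|)\log p/q$ and the sum over $p\mid q$ closes the argument with a single appeal to $\sum_{p\mid q}\log p\leq\log q$.
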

\begin{proof} [Proof] First note that $ap^{e}-b$ can not vanish since $p|q$ and $(ab,q)=1$. This implies that when $q/p^{\nu}$ divides $ap^{e}-b$, we must have $q/p^{\nu}\leq |a|p^{e}+|b|$, so that $p^{e+\nu}\geq q/(|a|+|b|)$.
 Therefore the sum we are seeking to bound is
$$ \ll \frac{(|a|+|b|)\log q}{q}\sum_{p|q}\frac{p\log p}{p-1}\ll
\frac{(|a|+|b|)\log^2q}{q}.$$
\end{proof}
\begin{proof}[Proof of Proposition 6.1]
The proof relies on Proposition 5.1. Since $|a|,|b|< q/2$ then $a+b\equiv 0\text{ mod }q$ implies that $a=-b$. Moreover, notice that $(q,a-b)\leq |a|+|b|$, which gives
$$ \frac{\Lambda\left(\frac{q}{(q,a-b)}\right)}
{\phi\left(\frac{q}{(q,a-b)}\right)}\ll \frac{(|a|+|b|)\log^2q}{q},$$
using the standard estimate $\phi(q)\gg q/\log q$.
Combining this bound with Proposition 5.1 and Lemmas 5.3 and 6.2 we obtain
\begin{equation}
 B_q(a,b)= -\phi(q)\left(l(a,b)\log 2+\frac{\Lambda(s_1)}{s_1}+\frac{\Lambda(s_2)}{s_2}\right)+O\left((|a|+|b|)\log^2 q\right),
\end{equation}
where $s_1$ and $s_2$ denote the least positive residues of $ba^{-1}$ and $ab^{-1}$ modulo $q$, respectively.

Let us first prove part I. Since $a$ and $b$ have different signs, then $s_1a\neq b$ and $s_2b\neq a$. On the other hand we have that $q$ divides both $s_1a-b$ and $s_2b-a$. This implies that $q\leq s_i(|a|+|b|)$ for $i=1,2$, and thus $s_i\geq q/(|a|+|b|).$ Hence we get
$$ \frac{\Lambda(s_1)}{s_1}+\frac{\Lambda(s_2)}{s_2}\ll \frac{(|a|+|b|)\log q}{q},$$
which, in view of equation (6.2), gives the first part of the Proposition.

Now, if $a$ and $b$ have the same sign, then $l(a,b)=0$, and $|a|\neq |b|.$ Without any loss of generality we may assume that $|a|<|b|$. Then $s_2b\neq a$, which as before implies that $\Lambda(s_2)/s_2\ll (|a|+|b|)(\log q)/q.$ Furthermore, if $a|b$ then $s_1=|b|/|a|$; while if $a\nmid b$ then $s_1\geq q/(|a|+|b|)$, and thus $\Lambda(s_1)/s_1\ll (|a|+|b|)(\log q)/q$ in this case. Therefore, we obtain
$$ \frac{\Lambda(s_1)}{s_1}=\Lambda_0\left(\frac{|b|}{|a|}\right)
+O\left(\frac{(|a|+|b|)\log q}{q}\right).$$
Hence, part II follows upon combining these estimates with equation (6.2).
\end{proof}

Our next result determines the signs of some of the integrals $\beta_{j,k}(r)$.
\begin{lem} For $r\geq 3$ we have
$ \beta_{1,r}(r)<0$ and $\beta_{r-1,r}(r)>0.$
\end{lem}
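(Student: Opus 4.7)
The plan is to prove each inequality by integrating out the ``middle'' order statistics in the definition of $\beta_{j,k}(r)$ and then applying Gaussian integration by parts, based on the identity $v\phi(v)=-\phi'(v)$ (where $\phi$ denotes the standard normal density and $\Phi$ its distribution function), until the sign of the resulting expression becomes manifest. Using Lemma 4.4 to rewrite $\beta_{r-1,r}(r)=\beta_{1,2}(r)$, it is enough to show that $\beta_{1,2}(r)>0$ and $\beta_{1,r}(r)<0$.

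For $\beta_{1,2}(r)$, I would first integrate out $x_3>\cdots>x_r$, which are all constrained to lie below $x_2$, producing the factor $\Phi(x_2)^{r-2}/(r-2)!$; integrating $x_1$ over $(x_2,\infty)$ and using $\int_{x_2}^{\infty}x_1\phi(x_1)\,dx_1=\phi(x_2)$ then disposes of $x_1$ and leaves
\[\beta_{1,2}(r)=\frac{1}{(r-2)!}\int_{-\infty}^{\infty}v\,\phi(v)^2\,\Phi(v)^{r-2}\,dv.\]
Since $v\phi(v)^2=-\frac{1}{4\pi}(e^{-v^2})'$, a single integration by parts (whose boundary terms vanish by Gaussian decay) transforms this into
\[\beta_{1,2}(r)=\frac{1}{(r-3)!\,\cdot\,4\pi\sqrt{2\pi}}\int_{-\infty}^{\infty}e^{-3v^2/2}\,\Phi(v)^{r-3}\,dv,\]
whose integrand is strictly positive for every $r\geq 3$.

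For $\beta_{1,r}(r)$, I would integrate out $x_2,\ldots,x_{r-1}$, all constrained to the interval $(x_r,x_1)$, producing the factor $(\Phi(x_1)-\Phi(x_r))^{r-2}/(r-2)!$. Writing $u=x_1$ and $v=x_r$, Gaussian integration by parts in $u$ via $u\phi(u)=-\phi'(u)$ eliminates the boundary contributions (at $u=v$ because $(\Phi(v)-\Phi(v))^{r-2}=0$ for $r\geq 3$, and at $u=\infty$ by Gaussian decay) and produces a factor of $(r-2)\int_v^{\infty}\phi(u)^2(\Phi(u)-\Phi(v))^{r-3}\,du$. For $r\geq 4$, a second integration by parts in $v$ using $v\phi(v)=-\phi'(v)$, after a completely analogous boundary-term analysis, yields
\[\beta_{1,r}(r)=-\frac{1}{(r-4)!}\iint_{u>v}\phi(u)^2\,\phi(v)^2\,(\Phi(u)-\Phi(v))^{r-4}\,du\,dv,\]
which is manifestly negative; the remaining case $r=3$ is already recorded in Lemma 4.5, where $\beta_{1,3}(3)=-\tfrac{1}{2\pi\sqrt{3}}<0$. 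The only delicate point in the plan is careful bookkeeping of the boundary terms in the iterated integration by parts: the factor $(\Phi(u)-\Phi(v))^{k}$ vanishes at $u=v$ exactly when $k\geq 1$, which is precisely what forces the argument for $\beta_{1,r}(r)$ to split at $r=3$ versus $r\geq 4$. All boundary terms at infinity are harmless thanks to the Gaussian decay of $\phi$, so no convergence issues arise.
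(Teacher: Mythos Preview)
Your proof is correct, but the paper's argument is shorter and more direct. Instead of first integrating out the \emph{unweighted} variables (which produces the CDF factors $\Phi(\cdot)^{r-2}$ or $(\Phi(u)-\Phi(v))^{r-2}$ and then forces you into one or two integrations by parts), the paper integrates the \emph{weighted} variables first, using only the elementary antiderivative $\int x e^{-x^2/2}\,dx=-e^{-x^2/2}$. For $\beta_{r-1,r}(r)$ one integrates in $x_r$ and then in $x_{r-1}$, obtaining
\[
(2\pi)^{r/2}\beta_{r-1,r}(r)=\tfrac12\int_{x_1>\cdots>x_{r-2}}\exp\!\left(-\tfrac{x_1^2+\cdots+x_{r-3}^2+3x_{r-2}^2}{2}\right)d\mathbf{x}>0,
\]
and for $\beta_{1,r}(r)$ one integrates in $x_1$ and in $x_r$ simultaneously, obtaining
\[
(2\pi)^{r/2}\beta_{1,r}(r)=-\int_{x_2>\cdots>x_{r-1}}\exp\!\left(-\tfrac{2x_2^2+x_3^2+\cdots+x_{r-2}^2+2x_{r-1}^2}{2}\right)d\mathbf{x}<0.
\]
This handles all $r\geq 3$ uniformly, with no integration by parts, no boundary analysis, and no need to split off $r=3$ and appeal to Lemma~4.5. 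Your approach does have the pleasant by-product of a closed-form expression for $\beta_{1,2}(r)$ and $\beta_{1,r}(r)$ in terms of $\Phi$, which could be useful if one wanted quantitative bounds; but for the bare sign assertion the paper's route is cleaner. One small notational caveat: in this paper the symbol $\Phi$ is already reserved for $\Phi(x)=e^{-x^2/2}$ (see just before Lemma~4.2), so if you insert your argument verbatim you should rename your CDF to avoid a clash.
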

\begin{proof} First we have
\begin{equation*}
\begin{aligned}
&(2\pi)^{r/2}\beta_{r-1,r}(r)= \int_{x_1>x_2>\cdots>x_r}x_{r-1}x_r\exp\left(-\frac{x_1^2+\cdots+x_r^2}{2}\right)dx_1\dots dx_r\\
&= \int_{x_1>\cdots>x_{r-2}}\exp\left(-\frac{x_1^2+\cdots+x_{r-2}^2}{2}\right)
\int_{-\infty}^{x_{r-2}}x_{r-1}e^{-\frac{x_{r-1}^2}{2}}\int_{-\infty}^{x_{r-1}}x_{r}e^{-\frac{x_{r}^2}{2}}dx_rdx_{r-1}\cdots dx_1\\
&= -\int_{x_1>\cdots>x_{r-2}}\exp\left(-\frac{x_1^2+\cdots+x_{r-2}^2}{2}\right)
\int_{-\infty}^{x_{r-2}}x_{r-1}e^{-x_{r-1}^2}dx_{r-1}\cdots dx_1\\
&= \frac{1}{2}\int_{x_1>\cdots>x_{r-2}}\exp\left(-\frac{x_1^2+\cdots+x_{r-3}^2+ 3x_{r-2}^2}{2}\right)dx_{r-2}\cdots dx_1 >0.\\
\end{aligned}
\end{equation*}
Similarly we get
\begin{equation*}
\begin{aligned}
&(2\pi)^{r/2}\beta_{1,r}(r)= \int_{x_1>x_2>\cdots>x_r}x_1x_r\exp\left(-\frac{x_1^2+\cdots+x_r^2}{2}\right)dx_1\dots dx_r\\
&= \int_{x_2>\cdots>x_{r-1}}\exp\left(-\frac{x_2^2+\cdots+x_{r-1}^2}{2}\right)
\int_{x_{2}}^{\infty}x_1e^{-\frac{x_1^2}{2}}\int_{-\infty}^{x_{r-1}}x_{r}e^{-\frac{x_{r}^2}{2}}dx_1dx_{r}dx_{r-1}\cdots dx_2\\
&= -\int_{x_1>\cdots>x_{r-2}}\exp\left(-\frac{2x_2^2+x_3^2\cdots+x_{r-2}^2+2x_{r-1}^2}{2}\right)dx_{r-1}\cdots dx_2<0.\\
\end{aligned}
\end{equation*}
\end{proof}
 Before proving  Theorems 2, 4 and 6, let us first define some notation. Let $q$ be a large positive integer. Define $p$ to be the largest prime divisor of $q$, and denote by $p_0$ the least non-quadratic residue modulo $p$ (if $p=2$ take $p_0=3$). Then Burgess's bound on short character sums (see \cite{IK}) implies that $p_0\leq p^{1/(4\sqrt{e})+\epsilon}\leq q^{1/4}$. Moreover note that $p_0$ is a prime and is also a non-square modulo $q$. Furthermore we shall denote by $p_1<p_2$  the smallest prime numbers such that $p_i\neq p_0$ for $i=1,2$, and $(p_1p_2,q)=1$. Then one has $p_1<p_2\leq 2\log q$, in view of the fact that $\prod_{p\leq z}p=e^{z+o(z)}$ which follows from the prime number theorem.

\begin{proof}[Proof of Theorem 2] The first part that $|\delta_{q; a_1,\dots,a_r}-1/r!|\ll_r 1/\log q$ follows from combining Theorem 1  with Corollary 5.4 and the fact that $|C_q|=q^{o(1)}.$ Concerning the second part we first take $a_1=1$, $a_r=-1$ and $a_j= (p_1p_2)^{2j}$ for $2\leq j\leq r-1$. Then $|a_j|\leq (2\log q)^{4(r-1)}$ for all $1\leq j\leq r$. Using part II of Proposition 6.2 we obtain
$$ B_q(a_j,a_k)\ll (\log q)^{4r}, \text{ for all } 1\leq j<k\leq r-1,$$
since $p_1p_2|a_k/a_j$ in this case. Furthermore, part I of the same proposition implies that
$$ B_q(a_j,a_r)\ll (\log q)^{4r} \text{ for all } 2\leq j\leq r-1,$$
and $$B_q(a_1,a_r)= -\phi(q)\log 2 + O\left(\log^{2}q\right).$$ Therefore by Theorem 1 and Lemma 6.3 we deduce that
$$ \delta_{q;a_1,\dots,a_r}= \frac{1}{r!}+\frac{\beta_{1,r}(r)B_q(a_1,a_r)}{N_q}+ O_{\epsilon}\left(\frac{1}{\phi(q)^{1/2-\epsilon}}\right)>\frac{1}{r!}+
\frac{|\beta_{1,r}(r)|\log 2}{2\log q}.$$
Furthermore taking $b_1=a_{r-1}$, $b_{r-1}=a_1$ and $b_j=a_j$ for all other values of $j$, we obtain by Lemma 6.3 that
$$ \delta_{q;b_1,\dots,b_r}= \frac{1}{r!}+\frac{\beta_{r-1,r}(r)B_q(b_{r-1},b_r)}{N_q}+ O_{\epsilon}\left(\frac{1}{\phi(q)^{1/2-\epsilon}}\right)<\frac{1}{r!}
-\frac{|\beta_{r-1,r}(r)|\log 2}{2\log q},$$
completing the proof.

\end{proof}
\begin{proof}[Proof of Theorem 4] We only need to construct the squares $a_j$ modulo $q$, since in this case $\delta_{q; ba_1,\dots, ba_r}= \delta_{q; a_1,\dots,a_r}$ for any residue class $b$ modulo $q$ by Theorem 2 of Feuerverger and Martin \cite{FeM}. Thus it suffices to take $b_j=ba_j$ for any non-square $b$ modulo $q$, to get the analogous result for non-squares.

Let $a_1=1,a_r=p_1^2$ and $a_j=(p_1p_2)^{2j}$ for $2\leq j\leq r-1$. Then $a_j\leq (2\log q)^{4(r-1)}$ for all $1\leq j\leq r$. Moreover for $1\leq j<k\leq r-1$ notice that $p_1p_2|a_k/a_j$. Therefore part II of Proposition 6.1 gives that
$$
 B_q(a_j,a_k)\ll (\log q)^{4r}, \text{ for all } 1\leq j<k\leq r-1.
$$
 and
$$ B_q(a_j,a_r)\ll  (\log q)^{4r}, \text{ for all } 2\leq j\leq r-1,$$
since $p_1p_2|a_j/a_r$ in this case.
Finally, since $a_r/a_1=p_1^2$, we have
$$ B_q(a_1,a_r)= -\phi(q)\frac{\log p_1}{p_1^{2}} + O\left((\log q)^{4r}\right).$$
Thus combining  these estimates with Corollary 3 and Lemma 6.3 we deduce that
\begin{equation} \delta_{q;a_1,\dots, a_r}=\frac{1}{r!}+\frac{\beta_{1,r}(r)B_q(a_1,a_r)}{N_q}+O\left(\frac{(\log q)^{4r} }{\phi(q)}\right)>\frac{1}{r!}+ \frac{|\beta_{1,r}(r)|}{5\log^3 q}.
\end{equation}
 Furthermore, let $\sigma$ be the permutation on the set $\{1,\dots,r\}$ defined by $\sigma(1)=r-1$, $\sigma(r-1)=1$, and $\sigma(j)=j$ for all other values of $j$. Then using Lemma 6.3 we obtain similarly to (6.3) that
\begin{equation} \delta_{q;a_{\sigma(1)},\dots, a_{\sigma(r)}}=\frac{1}{r!}+\frac{\beta_{r-1,r}(r)B_q(1,p_1^2)}{N_q}
+O\left(\frac{(\log q)^{4r} }{\phi(q)}\right)<\frac{1}{r!}- \frac{|\beta_{r-1,r}(r)|}{5\log^3 q},
\end{equation}
which completes the proof.

\end{proof}

\begin{proof}[Proof of Theorem 6] The main idea of the proof relies on the fact (proved in part II of Proposition 6.1) that when $a,b>0$ and $a,b$ are small comparatively to $q$, the quantity $B_q(a,b)$ is small unless $\max(a,b)/\min(a,b)$ equals a prime power.  Since $(\kappa_1,\dots,\kappa_r)\neq (0,\dots,0)$ then $\kappa_l\neq 0$ for some $1\leq l\leq r$.
 \smallskip

\noindent {\bf Case 1}: $\kappa_r\neq 0$ or $\kappa_1\neq 0$.

 We only handle the case $\kappa_r\neq 0$, since the treatment of the case $\kappa_1\neq 0$ follows simply by switching $a_1$ with $a_r$, and $b_1$ with $b_r$ in every construction we make below. Assume first that $\kappa_r>0$. In this case take $a_1=1$, $a_j= p_0(p_1p_2)^{2j}$ for $2\leq j\leq r-1$ and
 $a_r= (p_1p_2)^2$. Then $a_1$ and $a_r$ are squares and $a_j$ is a non-square modulo $q$ for all $2\leq j\leq r-1$. Moreover choose $b_j=a_j$ for all $1\leq j\leq r-1$ and $b_r=p_0$. In this case $b_1$ is the only square among the $b_j$ modulo $q$.
 Since $C_q(1)>-1$ we get that
$$ \sum_{j=1}^r\kappa_jC_q(a_j)-\sum_{j=1}^r\kappa_jC_q(b_j)= \kappa_rC_q(a_r)-\kappa_rC_q(b_r)= \kappa_r(C_q(1)+1)>0.$$
On the other direction, note that $|a_j|\leq q^{1/4}(2\log q)^{4(r-1)}$ for all $1\leq j\leq r$, and that $p_1p_2$ divides $\max(a_j,a_k)/\min(a_j,a_k)$ for all $1\leq j<k\leq r$. Therefore, upon using part II of Proposition 6.1 we deduce that
$$ |B_q(a_j,a_k)|\ll q^{1/4}(\log q)^{4r}\text { for all } 1\leq j<k\leq r.$$
Hence by Theorem 1 we obtain
\begin{equation}
 \delta_{q;a_1,\dots,a_r}=\frac{1}{r!}+ O_{\epsilon}\left(\frac{1}{\phi(q)^{1/2-\epsilon}}\right).
\end{equation}
Similarly, part II of Proposition 6.1 gives that $|B_q(b_j,b_k)|\ll q^{1/4}(\log q)^{4r}$ for all $\{j,k\}\neq \{1,r\}$ and
$$ B_q(b_1,b_r)= -\phi(q)\frac{\log p_0}{p_0}+ O\left(q^{1/4}(\log q)^{4r}\right).$$
Thus using Theorem 1 along with Lemma 6.3 and equation (6.5) we get
$$  \delta_{q;b_1,\dots,b_r}=\frac{1}{r!}+\frac{\beta_{1,r}(r)B_q(b_1,b_r)}{N_q} + O_{\epsilon}\left(\frac{1}{\phi(q)^{1/2-\epsilon}}\right)>\frac{1}{r!}+ \frac{|\beta_{1,r}(r)|\log p_0}{2p_0\log q}> \delta_{q;a_1,\dots,a_r}.$$

Now suppose that $\kappa_r<0$. In this case we choose $a_1=1$ and $a_j= p_0(p_1p_2)^{2j}$ for all $2\leq j\leq r$ (so that $a_1$ is the only square among the $a_j$); and $b_j=a_j$ for all $1\leq j\leq r-1$,  and $b_r=p_1^2$ (in this case both $b_1$ and $b_r$ are squares modulo $q$). Then similarly to the case $k_r>0$, one has
$$ \sum_{j=1}^r\kappa_jC_q(a_j)-\sum_{j=1}^r\kappa_jC_q(b_j)= -\kappa_r(1+C_q(1))>0,$$
$$\delta_{q;a_1,\dots,a_r}= \frac{1}{r!}+ O_{\epsilon}\left(\frac{1}{\phi(q)^{1/2-\epsilon}}\right),$$
and
$$ \delta_{q;b_1,\dots,b_r}=\frac{1}{r!}+\frac{\beta_{1,r}(r)B_q(b_{1},b_r)}{N_q}+ O_{\epsilon}\left(\frac{1}{\phi(q)^{1/2-\epsilon}}\right)>\frac{1}{r!}+
\frac{|\beta_{1,r}(r)|\log p_1}{2p_1^2\log q}>\delta_{q;a_1,\dots,a_r},$$
using Theorem 1, part II of Proposition 6.1 and Lemma 6.3.

\smallskip

\noindent {\bf Case 2}: $\kappa_l\neq 0$ for some $2\leq l\leq r-1$.

As before assume first that $k_l>0$. For the $a_i$ we choose $a_1=1, a_l=(p_1p_2)^2,$ and $a_j=p_0(p_1p_2)^{4j}$ for $2\leq j\neq l\leq r$; and for the $b_i$ we take $b_l= p_0(p_1p_2)^{4l}$, $b_r=p_0$ and $b_j=a_j$ for all other values of $j$. Then, an analogous argument to Case 1 gives that
$$ \sum_{j=1}^r\kappa_jC_q(a_j)-\sum_{j=1}^r\kappa_jC_q(b_j)= \kappa_l(C_q(1)+1)>0, \text{ and } \delta_{q; b_1,\dots,b_r}>\delta_{q; a_1,\dots,a_r},$$
if $q$ is large. Finally if $\kappa_l<0$, we choose $a_1=1$, $a_r= (p_1p_2)^4$ and $a_j= p_0(p_1p_2)^{4j}$ for $2\leq j \leq r-1$; and $b_l= (p_1p_2)^4$, $b_r=p_1^2$ and $b_j=a_j$ for all other values for $j$, to deduce the desired conclusion.
\end{proof}
\section{$q$-extremely biased races}

The idea behind the proof of Theorem 3 is to observe that when the $a_i$ are small comparatively to $q$, the term $B_q(a_i,a_j)$ have a large contribution to the density $\delta_{q; a_1,\dots,a_r}$ if and only if $a_i=-a_j$ or $a_i$ and $a_j$ have the same sign and $\max(|a_i|,|a_j|)/\min(|a_i|,|a_j|)$ equals a prime power (this is proved in Proposition 6.1). The first step is to reduce to the case $r=3$ (which is easier to deal with) using the following lemma.
\begin{lem} Let $r\geq 3$ be a fixed integer, $q$ be a large positive integer and $(a_1,\dots,a_r)\in \mathcal{A}_r(q)$. If there exist $1\leq i_1<i_2<i_3\leq r$ such that the race $\{q;a_{i_1},a_{i_2},a_{i_3}\}$ is $q$-extremely biased, then the race $\{q; a_1,\dots, a_r\}$ is $q$-extremely biased.
\end{lem}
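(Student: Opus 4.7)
The plan is to deduce the lemma from a single marginal identity relating the three-way density to a sum of $r$-way densities, after which pigeonhole does the rest; no analysis of $N_q$, $B_q$, or $C_q$ is required, and no appeal to Theorem 1 is needed.

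For a permutation $\sigma$ of $\{i_1,i_2,i_3\}$, let $\mathcal{T}(\sigma)\subset S_r$ denote the collection of $\tau\in S_r$ whose ordered sequence $(\tau(1),\dots,\tau(r))$ contains $\sigma(i_1),\sigma(i_2),\sigma(i_3)$ in that relative order; a standard count gives $|\mathcal{T}(\sigma)|=r!/6$. Under GRH and GSH the measure $\mu_{q;a_1,\dots,a_r}$ is absolutely continuous with respect to Lebesgue measure on $\mathbb{R}^r$ (Rubinstein--Sarnak), so the hyperplanes $\{x_j=x_k\}$ are $\mu_{q;a_1,\dots,a_r}$-null and the open simplices $\{x_{\tau(1)}>\cdots>x_{\tau(r)}\}$ partition $\mathbb{R}^r$ up to a set of $\mu$-measure zero. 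Since $\mu_{q;a_{i_1},a_{i_2},a_{i_3}}$ is the marginal of $\mu_{q;a_1,\dots,a_r}$ on the coordinates indexed by $i_1,i_2,i_3$, integrating the indicator of $\{x_{\sigma(i_1)}>x_{\sigma(i_2)}>x_{\sigma(i_3)}\}$ against this partition produces the marginal identity
\begin{equation*}
\delta_{q;a_{\sigma(i_1)},a_{\sigma(i_2)},a_{\sigma(i_3)}}=\sum_{\tau\in\mathcal{T}(\sigma)}\delta_{q;a_{\tau(1)},\dots,a_{\tau(r)}}.
\end{equation*}

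Now suppose $\{q;a_{i_1},a_{i_2},a_{i_3}\}$ is $q$-extremely biased, and pick a permutation $\sigma$ of $\{i_1,i_2,i_3\}$ with $|\delta_{q;a_{\sigma(i_1)},a_{\sigma(i_2)},a_{\sigma(i_3)}}-1/6|\geq c(r)/\log q$ for some constant $c(r)>0$. Subtracting the trivial equality $|\mathcal{T}(\sigma)|/r!=1/6$ from both sides of the marginal identity gives
\begin{equation*}
\delta_{q;a_{\sigma(i_1)},a_{\sigma(i_2)},a_{\sigma(i_3)}}-\frac{1}{6}=\sum_{\tau\in\mathcal{T}(\sigma)}\left(\delta_{q;a_{\tau(1)},\dots,a_{\tau(r)}}-\frac{1}{r!}\right),
\end{equation*}
and the pigeonhole principle supplies some $\tau\in\mathcal{T}(\sigma)$ with
\begin{equation*}
\left|\delta_{q;a_{\tau(1)},\dots,a_{\tau(r)}}-\frac{1}{r!}\right|\geq\frac{6}{r!}\cdot\frac{c(r)}{\log q}\gg_r\frac{1}{\log q}.
\end{equation*}
Hence the race $\{q;a_1,\dots,a_r\}$ is $q$-extremely biased.

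The only substantive input is the absolute continuity of $\mu_{q;a_1,\dots,a_r}$, which is already in the Rubinstein--Sarnak framework recalled in Section 3 and is reinforced quantitatively by Proposition 3.2. Consequently no real obstacle arises beyond bookkeeping the permutations, which is exactly why a lemma of this form does not require any of the heavier machinery developed elsewhere in the paper.
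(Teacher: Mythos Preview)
Your proof is correct and follows essentially the same route as the paper's. Both arguments establish the marginal identity
\[
\delta_{q;a_{j_1},a_{j_2},a_{j_3}}=\sum_{\tau}\delta_{q;a_{\tau(1)},\dots,a_{\tau(r)}}
\]
over the $r!/6$ permutations $\tau$ preserving the relative order of $j_1,j_2,j_3$, then subtract $1/6=(r!/6)\cdot(1/r!)$ and apply the triangle inequality (pigeonhole) to extract a single $\tau$ with $|\delta_{q;a_{\tau(1)},\dots,a_{\tau(r)}}-1/r!|\gg_r 1/\log q$. The only difference is cosmetic: you spell out the absolute-continuity justification for the marginal identity, whereas the paper simply appeals to ``the definition of the densities''.
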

\begin{proof}
Suppose that there exist $1\leq i_1<i_2<i_3\leq r$ with the property that the race $\{q;a_{i_1},a_{i_2},a_{i_3}\}$ is $q$-extremely biased. Then, for some permutation $\nu$ of the set $\{i_1,i_2,i_3\}$ we have $|\delta_{q; a_{\nu(i_1)}, a_{\nu(i_2)}, a_{\nu(i_3)}}-1/6|\gg 1/\log q$. Let $j_l= \nu(i_l)$,  and define $S$ to be set of all permutations $\sigma$ of $\{1,\dots,r\}$ such that $\sigma(j_1)>\sigma(j_2)>\sigma(j_3).$ Then, using the definition of the densities $\delta_{q;a_1,\dots,a_r}$, we have
\begin{equation}
\delta_{q;a_{j_1},a_{j_2}, a_{j_3}}= \sum_{\sigma\in S}\delta_{q;a_{\sigma(1)},\dots, a_{\sigma(r)}}.
\end{equation}
Now, a simple combinatorial argument shows that $|S|=r!/3!$. Hence we obtain from (7.1) that
$$ \frac{1}{\log q}\ll \left|\delta_{q;a_{j_1},a_{j_2}, a_{j_3}}-\frac{1}{6}\right|\leq \sum_{\sigma\in S}\left|\delta_{q;a_{\sigma(1)},\dots, a_{\sigma(r)}}-\frac{1}{r!}\right|\ll_r \max_{\sigma\in S} \left|\delta_{q;a_{\sigma(1)},\dots, a_{\sigma(r)}}-\frac{1}{r!}\right|,$$
which implies that the race $\{q; a_1,\dots,a_r\}$ is $q$-extremely biased.
\end{proof}

The next step is to investigate the main contribution to $B_q(a,b)$ when $a,b>0$ are relatively small compared to $q$ and $\max(a,b)/\min(a,b)$ equals a prime power. To this end we establish some properties of the function $\Lambda_0(x)$ defined in (6.1).
\begin{lem} The maximum of $\Lambda_0(x)$ over $\mathbb{R}$ equals $(\log 3)/3$. Moreover, if $n$ is a positive integer with $\Lambda_0(n)\neq 0$, then $\Lambda_0(m)=\Lambda_0(n)$ implies that $m=n$.
\end{lem}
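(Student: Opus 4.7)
The plan is to reduce both claims to elementary facts about the function $f(x) = (\log x)/x$ on the real line, using the observation that $\Lambda_0(n) \neq 0$ precisely when $n$ is a prime power $n = p^k$ with $p$ prime and $k \geq 1$, in which case $\Lambda_0(p^k) = (\log p)/p^k$.

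First I would dispose of the maximum. Since $\Lambda_0$ vanishes off the set of prime powers, it suffices to maximize $(\log p)/p^k$ over primes $p$ and integers $k \geq 1$. For any fixed prime $p \geq 2$, the sequence $k \mapsto (\log p)/p^k$ is strictly decreasing in $k$, so the maximum over $k$ is attained at $k=1$ and equals $(\log p)/p = f(p)$. Now $f'(x) = (1-\log x)/x^2$, which is positive on $(0,e)$ and negative on $(e,\infty)$, so $f$ is maximized at $x = e$ and is decreasing on $[e,\infty)$. Hence $\max_{p \text{ prime}} f(p)$ is attained either at $p=2$ or at $p=3$. A direct comparison $(\log 2)/2 < (\log 3)/3$ (equivalently $3^2 = 9 > 8 = 2^3$, i.e. $2\log 3 > 3\log 2$) gives that the maximum is $(\log 3)/3$, achieved uniquely at $n=3$.

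For the injectivity statement, suppose $n$ is a positive integer with $\Lambda_0(n) \neq 0$ and $\Lambda_0(m) = \Lambda_0(n)$ for some integer $m$. Then $m$ must also be a prime power (otherwise $\Lambda_0(m) = 0$), so write $n = p^a$ and $m = q^b$ for primes $p, q$ and integers $a, b \geq 1$. The equality $\Lambda_0(p^a) = \Lambda_0(q^b)$ becomes $(\log p)/p^a = (\log q)/q^b$, i.e.\ $q^b \log p = p^a \log q$. Exponentiating, this says
\[
 p^{q^b} = q^{p^a}.
\]
The left-hand side is a power of $p$ and the right-hand side is a power of $q$, so by unique factorization (and since $p^{q^b}, q^{p^a} \geq 2$) we must have $p = q$. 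Substituting back yields $p^a = p^b$, hence $a = b$, so $m = n$.

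I do not anticipate any real obstacles here; the only mildly delicate point is noting that $k \mapsto (\log p)/p^k$ is decreasing in $k$ (so that one may restrict attention to $k=1$) and the numerical comparison between $(\log 2)/2$ and $(\log 3)/3$, both of which are immediate.
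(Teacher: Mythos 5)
Your proof is correct and takes essentially the same route as the paper: both reduce the maximum to $k=1$ via the monotonicity of $k\mapsto (\log p)/p^k$, then use the unimodality of $(\log x)/x$ together with the comparison $(\log 2)/2<(\log 3)/3$, and both establish injectivity by exponentiating $q^b\log p=p^a\log q$ to $p^{q^b}=q^{p^a}$ and invoking unique factorization. The only difference is cosmetic: you spell out the derivative computation where the paper just asserts the monotonicity.
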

\begin{proof} We know that $\Lambda_0(x)\neq 0$ if and only if  $x=p^l$ for some prime $p$ and a positive integer $l$. In this case $\Lambda_0(x)=(\log p)/p^l\leq \Lambda_0(p).$ The first part follows upon noting that the function $(\log x)/x$ is decreasing for $x\geq 3$ and $(\log 3)/3>(\log 2)/2.$

If $\Lambda_0(m)=\Lambda_0(n)\neq 0$, then there exist primes $p_1,p_2$ and positive integers $e_1,e_2$ such that $n=p_1^{e_1}$, $m=p_2^{e_2}$ and
$(\log p_1)/p_1^{e_1}=(\log p_2)/p_2^{e_2}.$ This implies that $p_1^{p_2^{e_2}}= p_2^{p_1^{e_1}}$, from which one can deduce that $p_1=p_2$ and thus $e_1=e_2$.
\end{proof}
\begin{lem} Let $a_1$, $a_2$ and $a_3$ be distinct positive real numbers. Define
$$X_1= \frac{\max(a_1,a_2)}{\min(a_1,a_2)}, X_2=\frac{\max(a_2,a_3)}{\min(a_2,a_3)}, \text{ and } X_3=\frac{\max(a_1,a_3)}{\min(a_1,a_3)}.$$
If one of the values $\Lambda_0(X_1)$, $\Lambda_0(X_2)$ and $\Lambda_0(X_3)$ is non-zero, then there exists a permutation $\sigma$ of the set $\{1,2,3\}$ such that
$$ \Lambda_0(X_{\sigma(1)})+ \Lambda_0(X_{\sigma(2)})- 2\Lambda_0(X_{\sigma(3)})\neq 0.$$

\end{lem}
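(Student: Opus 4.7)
My plan is to argue by contradiction: assume that for every permutation $\sigma$ of $\{1,2,3\}$ one has $\Lambda_0(X_{\sigma(1)})+\Lambda_0(X_{\sigma(2)})-2\Lambda_0(X_{\sigma(3)})=0$, and derive that all three $\Lambda_0(X_i)$ must be equal and nonzero, which will contradict the relation $X_3=X_1X_2$ forced by ordering.

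First I would reduce the expression to only three genuinely distinct values (corresponding to which of $X_1,X_2,X_3$ plays the role of $X_{\sigma(3)}$), write down the resulting three linear equations, and take pairwise differences. Each difference immediately yields $\Lambda_0(X_i)=\Lambda_0(X_j)$ for $i\ne j$, so the three values are all equal to some common $v$. By the hypothesis that at least one of the $\Lambda_0(X_i)$ is nonzero, we get $v\ne 0$; in particular all three $X_i$ are prime powers.

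Next I would reorder so that $a_1<a_2<a_3$ (which is harmless since the conclusion is symmetric in the indices), giving $X_1=a_2/a_1$, $X_2=a_3/a_2$, $X_3=a_3/a_1$, so that the multiplicative identity $X_3=X_1X_2$ holds and each $X_i>1$. Now applying the uniqueness part of Lemma 7.2 to the equality $\Lambda_0(X_1)=\Lambda_0(X_3)\ne 0$, I conclude $X_1=X_3$. Substituting into $X_3=X_1X_2$ yields $X_2=1$, contradicting $a_2<a_3$.

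I do not foresee a real obstacle: the only subtlety is remembering that $\Lambda_0$ takes nonzero values only on prime powers, where it is strictly positive, so the uniqueness statement of Lemma 7.2 applies without case analysis on signs. The proof should be short, essentially a few lines of linear algebra on the three equations followed by a one-line contradiction with $X_3=X_1X_2$.
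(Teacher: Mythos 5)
Your proof is correct and follows essentially the same route as the paper's: assume all six linear combinations vanish, deduce $\Lambda_0(X_1)=\Lambda_0(X_2)=\Lambda_0(X_3)\neq 0$, invoke the uniqueness part of Lemma~7.2 to force $X_1=X_3$, and reach a contradiction after ordering $a_1<a_2<a_3$. The only cosmetic difference is that the paper notes directly that $X_1\neq X_3$ because $a_2\neq a_3$, while you phrase the same contradiction via $X_3=X_1X_2$ and $X_2>1$.
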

\begin{proof} Assume without loss of generality that $a_1<a_2<a_3$. In this case $X_1= a_2/a_1$, $X_2=a_3/a_2$ and $X_3=a_3/a_1.$ Suppose that for all permutations $\sigma$ of the set $\{1,2,3\}$ we have  $ \Lambda_0(X_{\sigma(1)})+ \Lambda_0(X_{\sigma(2)})- 2\Lambda_0(X_{\sigma(3)})= 0.$ Then we must have $\Lambda_0(X_1)=\Lambda_0(X_2)=\Lambda_0(X_3)$. Furthermore since this value is non-zero we get by Lemma 7.2 that $X_1=X_2=X_3$. However this can not hold since $X_3\neq X_1$ by our hypothesis on the $a_i$.
\end{proof}
\begin{proof}[Proof of Theorem 3]
Assume First that neither i) nor ii) hold. In this case Proposition 6.1 implies that $B_q(a_j,a_k)=O_A\left(\log^2q\right),$ for all $1\leq j<k\leq r$. Inserting this estimate in Corollary 3 in the case where the $a_i$ are all squares (or all non-squares) modulo $q$, gives  $|\delta_{q;a_1\dots,a_r}-1/r!|\ll_{A,r}(\log q)/q$. Now if this is not the case then Theorem 1 implies that
$|\delta_{q;a_1\dots,a_r}-1/r!|\ll_{\epsilon,r} q^{-1/2+\epsilon}.$ Thus in both cases the race $\{q;a_1,\dots,a_r\}$ is not $q$-extremely biased.

Next, let us consider the case where $a_j=-a_k=a$ for some $1\leq j<k\leq r$. Since $r\geq 3$, then there exists $b\in \{a_1,\dots,a_r\}$ such that $b\neq a$ and $b\neq -a$. By Lemma 7.1 it suffices to prove that the race $\{q;a,-a,b\}$ is $q$-extremely biased. Without any loss of generality we may assume that $a$ and $b$ have the same sign (otherwise simply switch $a$ and $-a$). Applying Proposition 6.1 we obtain $B_q(a,-a)=-\phi(q)\log 2+ O_A(\log^2 q)$, $B_q(b,-a)=O_A(\log^2q)$ (since $b$ and $-a$ have different signs and $b-a\neq 0$) and
$$ B_q(a,b)= -\phi(q)\Lambda_0\left(\frac{\max(|a|,|b|)}{\min(|a|,|b|)}\right)+O_A(\log^2 q)\geq -\frac{\log 3}{3}\phi(q)+ O_A(\log^2 q),$$
which follows from Lemma 7.2. Inserting these estimates in Corollary 2, and recalling that $N_q\sim \phi(q)\log q$ and $|C_q(a)|=q^{o(1)}$, we get
$$ \delta_{q;a, b,-a}\geq \frac{1}{6} +\frac{2\log 2-(\log 3)/3}{8\pi\sqrt{3}}\frac{1}{\log q},$$
if $q$ is large enough, so that the race $\{q;a,-a,b\}$ is $q$-extremely biased.

Now, suppose that $a_i\neq -a_j$ for all $1\leq i<j\leq r$, and that there exist $b_1,b_2\in \{a_1,\dots,a_r\}$ such that $b_1=p^kb_2$ for some prime $p$, and a positive integer $k$. In this case part II of Proposition 6.1 yields
\begin{equation}
 B_q(b_1,b_2)= -\phi(q)\frac{\log p}{p^k}+O_A(\log^2q).
\end{equation}
Since $r\geq 3$, then there exists $b_3\in \{a_1,\dots,a_r\}$ with $b_3\neq b_i$ for $i=1,2$. First if $b_3$ and $b_1$ have different signs, then part I of Proposition 6.1 implies that
\begin{equation}
B_q(b_1,b_3), B_q(b_2,b_3)\ll_A\log^2 q.
\end{equation} Therefore, inserting the estimates (7.2) and (7.3) in Corollary 2 gives
$$ \delta_{q; b_1,b_2,b_3}=\frac{1}{6}-\frac{1}{4\pi\sqrt{3}}\frac{\log p}{p^k}(1+o(1)),$$
and thus the race $\{q;b_1,b_2,b_3\}$ is $q$-extremely biased. Hence, it only remains to handle the case where all the $b_i$ have the same sign. Let us denote by $S_3$ the set of all permutations of $\{1,2,3\}$. Since $|b_1|,|b_2|$ and $|b_3|$ are distinct by our hypothesis, and $\Lambda_0(|b_1|/|b_2|)\neq 0$, then Lemma 7.3 shows that there exists $\sigma\in S_3$ such that
$$ \Lambda_0(X_{\sigma(1)})+ \Lambda_0(X_{\sigma(2)})- 2\Lambda_0(X_{\sigma(3)})\neq 0,$$
where
$$X_1=\frac{|b_1|}{|b_2|}=p^k, X_2=\frac{\max(|b_2|,|b_3|)}{\min(|b_2|,|b_3|)}, \text{ and } X_3=\frac{\max(|b_1|,|b_3|)}{\min(|b_1|,|b_3|)}.$$
Therefore, upon using part II of Proposition 6.1 along with Corollary 2, we deduce that
$$ \max_{\nu\in S_3}\left|\delta_{q; b_{\nu(1)}, b_{\nu(2)}, b_{\nu(3)}}-\frac{1}{6}\right|\gg \frac{|\Lambda_0(X_{\sigma(1)})+ \Lambda_0(X_{\sigma(2)})- 2\Lambda_0(X_{\sigma(3)})|}{\log q},$$
which implies that the race $\{q;b_1,b_2,b_3\}$ is $q$-extremely biased. Thus, appealing to Lemma 7.1 the result follows.

\end{proof}
\section{Another proof for the asymptotic in two-way races}
In this section we derive Fiorilli and Martin \cite{FiM} asymptotic formula for the densities in the case $r=2$, using a slight modification of the method used to establish Theorem 1. In the version presented below, our main concern is to obtain the main term of (2.2) without giving much attention to the error term, in order to keep the exposition simple. Nonetheless, our approach would give an asymptotic expansion for $\delta_{q;a_1,a_2}$ with little extra work, if one allows more terms in the asymptotic series of the Fourier transform $\hat{\mu}_{q;a_1,a_2}$ in Lemma 8.1 below. Indeed we shall establish that

\begin{equation}
\delta_{q;a_1,a_2}= \frac{1}{2}- \frac{C_q(a_1)-C_q(a_2)}{\sqrt{2\pi V_q(a_1,a_2)}}+ O\left(\frac{C_q(1)^2\log^2q}{ V_q(a_1,a_2)}\right),
\end{equation}
for $(a_1,a_2)\in \mathcal{A}_2(q)$.
We begin by proving the analogue of Proposition 3.3
\begin{lem} For $t=(t_1,t_2)\in \mathbb{R}^2$ with $||t||\leq N_q^{1/4}$ we have
$$
\hat{\mu}_{q;a_1,a_2}
\left(\frac{t_1}{\sqrt{N_q}},\frac{t_2}{\sqrt{N_q}}\right)
=\exp\left(-\frac{t_1^2+t_2^2}{2}-\frac{B_q(a_1,a_2)}{N_q}t_1t_2\right)
F_{q;a_1,a_2}(t_1,t_2),$$
where
$$ F_{q;a_1,a_2}(t_1,t_2)=1+\frac{i}{\sqrt{N_q}}(C_q(a_1)t_1+C_q(a_2)t_2)
+ O\left(\frac{||t||^4}{N_q}+ \frac{||t||^2C_q(1)^2}{N_q}\right).$$
\end{lem}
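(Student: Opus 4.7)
The plan is to imitate the proof of Proposition 3.3, but without separating the $B_q(a_1,a_2)t_1t_2$ term from the Gaussian factor. Keeping it inside the exponential is precisely what saves us in the case $r=2$: since $\beta_{1,2}(2)=0$, one needs this term to contribute at the level of the covariance of the approximating Gaussian rather than as a perturbation.

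First I would start from the explicit formula (2.1) for $\hat{\mu}_{q;a_1,a_2}$, take logarithms, and plug in the expansion $\log J_0(s)=-\sum_{n\geq 1} u_{2n}s^{2n}$ (valid for $|s|\leq 1$), which holds in our range since $||t||\leq N_q^{1/4}$ ensures that each argument of $J_0$ in (2.1) is $\ll N_q^{-1/4}$. This yields
\begin{equation*}
\log\hat{\mu}_{q;a_1,a_2}\!\left(\tfrac{t_1}{\sqrt{N_q}},\tfrac{t_2}{\sqrt{N_q}}\right)
= \tfrac{i}{\sqrt{N_q}}(C_q(a_1)t_1+C_q(a_2)t_2)
-\sum_{n\geq 1}\tfrac{u_{2n}2^{2n}}{N_q^n}\!\!\sum_{\chi\neq\chi_0}\sum_{\gamma_\chi>0}\tfrac{|\chi(a_1)t_1+\chi(a_2)t_2|^{2n}}{(\tfrac14+\gamma_\chi^2)^n}.
\end{equation*}

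Next I would compute the $n=1$ term exactly. Expanding $|\chi(a_1)t_1+\chi(a_2)t_2|^2 = t_1^2+t_2^2+(\chi(a_1/a_2)+\chi(a_2/a_1))t_1t_2$ and invoking the definitions of $N_q$ and $B_q(a_1,a_2)$ in (2.3), this contribution is exactly $-\tfrac12(t_1^2+t_2^2) - \tfrac{B_q(a_1,a_2)}{N_q}t_1t_2$. For the remaining $n\geq 2$ terms I would use $|\chi(a_1)t_1+\chi(a_2)t_2|^{2n}\leq 2^n||t||^{2n}$, the standard bound $\sum_{\chi\neq\chi_0}\sum_{\gamma_\chi>0}(\tfrac14+\gamma_\chi^2)^{-n}\leq 4^n N_q$ used in Proposition 3.3, and $u_{2n}\ll(5/12)^{2n}$; the resulting geometric series converges in our range $||t||\leq N_q^{1/4}$ and is bounded by its first term, giving an error $O(||t||^4/N_q)$. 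Thus
\begin{equation*}
\log\hat{\mu}_{q;a_1,a_2}\!\left(\tfrac{t_1}{\sqrt{N_q}},\tfrac{t_2}{\sqrt{N_q}}\right)
= \tfrac{i}{\sqrt{N_q}}(C_q(a_1)t_1+C_q(a_2)t_2) - \tfrac{t_1^2+t_2^2}{2} - \tfrac{B_q(a_1,a_2)}{N_q}t_1t_2 + O\!\left(\tfrac{||t||^4}{N_q}\right).
\end{equation*}

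Finally, I would factor out $\exp\bigl(-\tfrac{t_1^2+t_2^2}{2}-\tfrac{B_q(a_1,a_2)}{N_q}t_1t_2\bigr)$, and expand the remaining exponential as
\begin{equation*}
\exp\!\left(\tfrac{i}{\sqrt{N_q}}(C_q(a_1)t_1+C_q(a_2)t_2) + O(||t||^4/N_q)\right)
= 1 + \tfrac{i}{\sqrt{N_q}}(C_q(a_1)t_1+C_q(a_2)t_2) + O\!\left(\tfrac{||t||^2 C_q(1)^2}{N_q}+\tfrac{||t||^4}{N_q}\right),
\end{equation*}
where the quadratic term of the exponential series gives the $||t||^2 C_q(1)^2/N_q$ contribution (using $|C_q(a_j)|\leq C_q(1)$) and the linear expansion of the $e^{O(||t||^4/N_q)}$ factor yields the $||t||^4/N_q$ contribution; cross terms are strictly smaller in our range. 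This produces the stated $F_{q;a_1,a_2}(t_1,t_2)$.

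The only mildly delicate step is the error analysis for $n\geq 2$: one must check that $||t||\leq N_q^{1/4}$ is indeed small enough to make the geometric series from $u_{2n}$ converge, and to ensure that the final error terms $||t||^4/N_q$ and $||t||^2 C_q(1)^2/N_q$ are both $o(1)$ so that the expansion of the exponential is legitimate. Both are immediate from $N_q\asymp\phi(q)\log q$ and $C_q(1)=q^{o(1)}$.
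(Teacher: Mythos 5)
Your proof follows the same route as the paper's Lemma 8.1: expand $\log\hat{\mu}_{q;a_1,a_2}$ via the explicit formula (2.1) and the series for $\log J_0$, identify the $n=1$ term exactly as $-\tfrac12(t_1^2+t_2^2)-\tfrac{B_q(a_1,a_2)}{N_q}t_1t_2$, bound the $n\geq 2$ tail by $O(||t||^4/N_q)$, then factor out the bivariate Gaussian and Taylor-expand the remaining $\exp\bigl(\tfrac{i}{\sqrt{N_q}}(C_q(a_1)t_1+C_q(a_2)t_2)+O(||t||^4/N_q)\bigr)$. The range-checking and cross-term estimates you add are correct, and the argument is complete.
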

\begin{proof} We follow closely the proof of Proposition 3.3. Indeed, for $||t||\leq N_q^{1/4}$ the explicit formula (2.1) implies that $\log\hat{\mu}_{q;a_1,a_2}
\left(t_1N_q^{-1/2},t_2N_q^{-1/2}\right)$ equals
\begin{equation*}
\begin{aligned}
&\frac{i}{\sqrt{N_q}}(C_q(a_1)t_1+C_q(a_2)t_2)- \frac{1}{N_q}\sum_{\substack{\chi\neq\chi_0\\ \chi \text{ mod } q}}\sum_{\gamma_{\chi}>0}\frac{|\chi(a_1)t_1+\chi(a_2)t_2|^2}
{\frac14+\gamma_{\chi}^2}
+O\left(\frac{||t||^4}{N_q}\right)\\
=&\frac{i}{\sqrt{N_q}}(C_q(a_1)t_1+C_q(a_2)t_2)
-\frac{t_1^2+t_2^2}{2}-\frac{B_q(a_1,a_2)}{N_q}
t_1t_2+O\left(\frac{||t||^4}{N_q}\right).
\end{aligned}
\end{equation*}
Thus, the lemma follows upon noting that
$$\exp\left(\frac{i}{\sqrt{N_q}}(C_q(a_1)t_1+C_q(a_2)t_2)\right)= 1+\frac{i}{\sqrt{N_q}}(C_q(a_1)t_1+C_q(a_2)t_2)+ O\left(\frac{||t||^2C_q(1)^2}{N_q}\right).$$
\end{proof}
Our next result is an analogue of Lemma 4.2 in the case of a bivariate normal distribution.
\begin{lem} Let $\rho$ be a real number such that $|\rho|\leq 1/2$, $n_1$, $n_2$ are fixed non-negative integers, and $M$ a large positive number. Then
\begin{equation*}
\begin{aligned}
&\int_{||t||\leq M}e^{i(t_1x_1+t_2x_2)}t_1^{n_1}t_2^{n_2}\exp\left(
-\frac{t_1^2+t_2^2+2\rho t_1t_2}{2}\right)dt_1dt_2\\
&= \frac{1}{i^{n_1+n_2}}\frac{\partial^{n_1+n_2}
\Phi_{\rho}(x_1,x_2)}{\partial x_1^{n_1}\partial x_2^{n_2}}      + O\left(\exp\left(-\frac{M^2}{8}\right)\right),
\end{aligned}
\end{equation*}
where
$$\Phi_{\rho}(x_1,x_2)= \frac{2\pi}{\sqrt{1-\rho^2}}
\exp\left(-\frac{1}{2(1-\rho^2)}(x_1^2+x_2^2-2\rho x_1x_2)\right).
$$
\end{lem}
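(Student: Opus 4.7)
The overall strategy is to split the integral on the left-hand side as
\[
\int_{\|t\|\leq M} = \int_{\mathbb{R}^2} - \int_{\|t\|>M},
\]
evaluate the full integral over $\mathbb{R}^2$ exactly to produce the stated derivative of $\Phi_\rho$, and bound the tail over $\|t\|>M$ to produce the error term $O(\exp(-M^2/8))$.

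For the main term, I would reduce to the case $n_1=n_2=0$ by differentiation under the integral sign. Using the identity
\[
t_1^{n_1}t_2^{n_2}e^{i(t_1x_1+t_2x_2)} = \frac{1}{i^{n_1+n_2}}\frac{\partial^{n_1+n_2}}{\partial x_1^{n_1}\partial x_2^{n_2}}e^{i(t_1x_1+t_2x_2)},
\]
and the absolute integrability of the Gaussian factor (which dominates all polynomial weights uniformly in $x$), the operation of differentiating in $x$ and integrating in $t$ may be interchanged. Thus it suffices to compute
\[
\int_{\mathbb{R}^2}e^{i(t_1x_1+t_2x_2)}\exp\!\left(-\tfrac12\,t^{T}At\right)dt_1 dt_2,\qquad A=\begin{pmatrix}1 & \rho\\ \rho & 1\end{pmatrix}.
\]
Since $|\rho|\leq 1/2<1$, the matrix $A$ is positive definite with $\det A=1-\rho^2$ and $A^{-1}=\frac{1}{1-\rho^2}\bigl(\begin{smallmatrix}1 & -\rho\\ -\rho & 1\end{smallmatrix}\bigr)$. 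The standard formula for the Fourier transform of a multivariate Gaussian (or, equivalently, a linear change of variables diagonalizing the quadratic form followed by two one-dimensional Gaussian Fourier transforms) gives
\[
\int_{\mathbb{R}^2}e^{i\,t\cdot x}\exp\!\left(-\tfrac12\,t^{T}At\right)dt = \frac{2\pi}{\sqrt{\det A}}\exp\!\left(-\tfrac12\,x^{T}A^{-1}x\right) = \Phi_\rho(x_1,x_2),
\]
which is precisely the claimed main term once we apply $i^{-(n_1+n_2)}\partial_{x_1}^{n_1}\partial_{x_2}^{n_2}$.

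For the tail estimate, the key observation is that for $|\rho|\leq 1/2$ we have $|2\rho t_1 t_2|\leq \tfrac12(t_1^2+t_2^2)$, so
\[
t_1^2+t_2^2+2\rho t_1 t_2 \geq \tfrac12(t_1^2+t_2^2) = \tfrac12 \|t\|^2.
\]
Passing to polar coordinates, the tail is then bounded by
\[
\int_{\|t\|>M}|t_1|^{n_1}|t_2|^{n_2}e^{-\|t\|^2/4}\,dt\ \ll_{n_1,n_2}\ \int_M^{\infty}r^{n_1+n_2+1}e^{-r^2/4}\,dr,
\]
which in turn is $\ll M^{n_1+n_2}e^{-M^2/4}$, and for $M$ sufficiently large (depending only on $n_1,n_2$) the polynomial factor is absorbed into the exponential to yield the claimed $O(\exp(-M^2/8))$.

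I do not anticipate any real obstacle: the argument is essentially a standard Fourier inversion of a bivariate Gaussian, together with a routine Gaussian tail bound. The only mildly delicate point is ensuring the $\rho$-uniformity of the implied constants in the tail estimate, which is handled by the coercivity bound $t^{T}At\geq \tfrac12\|t\|^2$ valid for all $|\rho|\leq 1/2$.
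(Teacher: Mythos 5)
Your proposal is correct and follows essentially the same route as the paper: truncate to the full $\mathbb{R}^2$ integral via the coercivity bound $t_1^2+t_2^2+2\rho t_1t_2\geq\frac12\|t\|^2$, differentiate under the integral sign to reduce to $n_1=n_2=0$, and evaluate the remaining Gaussian Fourier transform. The only cosmetic difference is that you compute the Gaussian transform directly from the matrix formula with $A^{-1}$ and $\det A$, whereas the paper identifies it as the characteristic function of the bivariate normal density; these are the same calculation.
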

\begin{proof} First, notice that $t_1^2+t_2^2+2\rho t_1t_2\geq (t_1^2+t_2^2)/2 $ which follows from the fact that $|t_1t_2|\leq (t_1^2+t_2^2)/2$. This implies that the integral we are seeking to estimate equals
$$ \int_{t\in \mathbb{R}^2}e^{i(t_1x_1+t_2x_2)}t_1^{n_1}t_2^{n_2}\exp\left(
-\frac{t_1^2+t_2^2+2\rho t_1t_2}{2}\right)dt_1dt_2 + O\left(\exp\left(-\frac{M^2}{8}\right)\right).
$$
Moreover, since the last integral is absolutely and uniformly convergent for $(x_1,x_2)\in \mathbb{R}^2$, we get that
$$  \int_{t\in \mathbb{R}^2}e^{i(t_1x_1+t_2x_2)}t_1^{n_1}t_2^{n_2}\exp\left(
-\frac{t_1^2+t_2^2+2\rho t_1t_2}{2}\right)dt_1dt_2= \frac{1}{i^{n_1+n_2}}\frac{\partial^{n_1+n_2}
\Phi_{\rho}(x_1,x_2)}{\partial x_1^{n_1}\partial x_2^{n_2}},
$$
where
$$ \Phi_{\rho}(x_1,x_2) = \int_{t\in \mathbb{R}^2}e^{i(t_1x_1+t_2x_2)}\exp\left(
-\frac{t_1^2+t_2^2+2\rho t_1t_2}{2}\right)dt_1dt_2.
$$
On the other hand, remark that $\frac{\sqrt{1-\rho^2}}{2\pi}\Phi_{\rho}(x_1,x_2)$ is the characteristic function of the bivariate normal distribution whose density is
$$ f(t_1,t_2)=\frac{\sqrt{1-\rho^2}}{2\pi}\exp\left(
-\frac{t_1^2+t_2^2+2\rho t_1t_2}{2}\right).$$
Therefore, we obtain that
$$ \frac{\sqrt{1-\rho^2}}{2\pi}\Phi_{\rho}(x_1,x_2)= \exp\left(-\frac{1}{2(1-\rho^2)}(x_1^2+x_2^2-2\rho x_1x_2)\right),$$
which completes the proof.

\end{proof}

We are now ready to establish (8.1).
We begin by following the proof of Theorem 1. Write $\mu_{q}=\mu_{q;a_1,a_2}$ and let $R=\sqrt{N_q}\log q$. Then Proposition 4.1 yields
$$ \delta_{q;a_1,a_2}=\int_{-R<y_2<y_1<R}d\mu_q(y_1,y_2)+ O\left(\exp\left(-\frac{\log^2q}{10}\right)\right).$$
Applying the Fourier inversion formula to the measure $\mu_q$ gives that
\begin{equation}
 \delta_{q;a_1,a_2}= \frac{1}{(2\pi)^2}\int_{-R<y_2<y_1<R}\int_{s\in \mathbb{R}^2}e^{i(s_1y_1+s_2y_2)}\hat{\mu}_q(s_1,s_2)d{\bf s}d{\bf y}+ O\left(\exp\left(-\frac{\log^2q}{10}\right)\right).
\end{equation}
Moreover, using Proposition 3.2 with $\epsilon=\log q N_q^{-1/2}$ gives
$$\int_{s\in \mathbb{R}^2}e^{i(s_1y_1+s_2y_2)}\hat{\mu}_q(s_1,s_2)d{\bf s}= \int_{||s||\leq \epsilon }e^{i(s_1y_1+s_2y_2)}\hat{\mu}_q(s_1,s_2)d{\bf s} +O\left(\exp\left(-c\log^2q\right)\right),
$$
for some constant $c>0$.
Inserting this estimate in (8.2), and making the change of variables $t_j=\sqrt{N_q}s_j$ and $x_j=y_j/\sqrt{N_q}$ for $j=1,2$, we infer from Lemma 8.1 that
\begin{equation}
\begin{aligned}
\delta_{q;a_1,a_2}&= \frac{1}{(2\pi)^2}
\int_{-\log q<x_2<x_1<\log q}\int_{||t||<\log q}e^{i(t_1x_1+t_2x_2)}\hat{\mu}_q\left(\frac{t_1}{\sqrt{N_q}},
\frac{t_2}{\sqrt{N_q}}\right)d{\bf t}d{\bf x} \\
&+O\left(\exp\left(-\log^{3/2}q\right)\right).\\
&= I_0+ \frac{iC_q(a_1)}{\sqrt{N_q}}I_1+
\frac{iC_q(a_2)}{\sqrt{N_q}}I_2 +O\left(\frac{C_q(1)^2\log^2q}{N_q}\right),
\end{aligned}
\end{equation}
where
$$ I_0= \frac{1}{(2\pi)^2}
\int_{-\log q<x_2<x_1<\log q}\int_{||t||<\log q}e^{i(t_1x_1+t_2x_2)}
\exp\left(-\frac{t_1^2+t_2^2}{2}-\frac{B_q(a_1,a_2)}{N_q}t_1t_2\right)
d{\bf t}d{\bf x},$$
and
$$ I_j= \frac{1}{(2\pi)^2}
\int_{-\log q<x_2<x_1<\log q}\int_{||t||<\log q}e^{i(t_1x_1+t_2x_2)}
t_j\exp\left(-\frac{t_1^2+t_2^2}{2}-\frac{B_q(a_1,a_2)}{N_q}t_1t_2\right)
d{\bf t}d{\bf x},$$
for $j=1,2$. We shall first evaluate $I_0$. Let $\rho= B_q(a_1,a_2)/N_q$. Then corollary 5.4 implies that $|\rho|\leq 1/2$ for $q$ large. Hence Lemma 8.2 yields
\begin{equation*}
\begin{aligned}
I_0&= \frac{1}{2\pi\sqrt{1-\rho^2}}
\int_{-\log q<x_2<x_1<\log q}\exp\left(-\frac{1}{2(1-\rho^2)}(x_1^2+x_2^2-2\rho x_1x_2)\right)dx_1dx_2\\
&+ O\left(\exp\left(-\frac{\log^2 q}{10}\right)\right).\\
\end{aligned}
\end{equation*}
Now the integral on the RHS of the last estimate equals
$$\frac{1}{2\pi\sqrt{1-\rho^2}}
\int_{x_1>x_2}\exp\left(-\frac{1}{2(1-\rho^2)}(x_1^2+x_2^2-2\rho x_1x_2)\right)dx_1dx_2 + O\left(\exp\left(-\frac{\log^2 q}{10}\right)\right).$$
Therefore, using that the integrand is symmetric in $x_1$ and $x_2$, along with the fact that
$$ \frac{1}{2\pi\sqrt{1-\rho^2}}
\int_{-\infty}^{\infty}\int_{-\infty}^{\infty}\exp\left(-\frac{1}{2(1-\rho^2)}(x_1^2+x_2^2-2\rho x_1x_2)\right)dx_1dx_2= 1, $$ we deduce that
\begin{equation}
 I_0= \frac{1}{2}+  O\left(\exp\left(-\frac{\log^2 q}{10}\right)\right).
\end{equation}
Using similar ideas along with Lemma 8.2 gives
\begin{equation*}
\begin{aligned}
 I_1&= \frac{1}{(2\pi)^2 i}\int_{x_1>x_2}\frac{\partial \Phi_{\rho}(x_1,x_2)}{\partial x_1}dx_1dx_2+O\left(\exp\left(-\frac{\log^2 q}{10}\right)\right)\\
 &=-\frac{1}{(2\pi)^2 i}\int_{-\infty}^{\infty} \Phi_{\rho}(x_2,x_2)dx_2+O\left(\exp\left(-\frac{\log^2 q}{10}\right)\right) .\\
 \end{aligned}
 \end{equation*}
 Furthermore, one has
 $$ \int_{-\infty}^{\infty} \Phi_{\rho}(y,y)dy= \frac{2\pi}{\sqrt{1-\rho^2}}\int_{-\infty}^{\infty} \exp\left(-\frac{y^2}{2}\left(\frac{2}{1+\rho}\right)\right)dy= \frac{2\pi^{3/2}}{\sqrt{1-\rho}}.$$
 Note that $2(1-\rho)= V_q(a_1,a_2)/N_q$. Thus, upon combining the above estimates we get
\begin{equation}
I_1= - \frac{\sqrt{N_q}}{i\sqrt{2\pi V_q(a_1,a_2)}} + O\left(\exp\left(-\frac{\log^2 q}{10}\right)\right).
\end{equation}
Similarly one obtains
\begin{equation}
I_2= \frac{\sqrt{N_q}}{i\sqrt{2\pi V_q(a_1,a_2)}} + O\left(\exp\left(-\frac{\log^2 q}{10}\right)\right).
\end{equation}
Finally, inserting the estimates (8.4)-(8.6) into equation (8.3), and using the fact that $V_q(a_1,a_2)\sim 2N_q$ give the desired result.

\end{document}